\DeclareFontFamily{U}{matha}{\hyphenchar\font45}
\DeclareFontShape{U}{matha}{m}{n}{
	<5> <6> <7> <8> <9> <10> gen * matha
	<10.95> matha10 <12> <14.4> <17.28> <20.74> <24.88> matha12
}{}
\DeclareSymbolFont{matha}{U}{matha}{m}{n}
\DeclareMathSymbol{\Lt}{3}{matha}{"CE}
\DeclareMathSymbol{\Gt}{3}{matha}{"CF}
\DeclareSymbolFont{mathc}{OML}{txmi}{m}{it}
\DeclareMathSymbol{\varuu}{\mathord}{mathc}{117}
\DeclareMathSymbol{\varvv}{\mathord}{mathc}{118}
\DeclareMathSymbol{\varww}{\mathord}{mathc}{119}
\def\ssstyle{\scriptscriptstyle}
\def\SB{\text{\raisebox{- 2 \depth}{\scalebox{1.1}{$ \text{\usefont{U}{BOONDOX-calo}{m}{n}B}   $}}}}
\def\SD{\text{\raisebox{- 2 \depth}{\scalebox{1.1}{$ \text{\usefont{U}{BOONDOX-calo}{m}{n}D} \hspace{0.5pt} $}}}}
\def\SE{\text{\raisebox{- 2 \depth}{\scalebox{1.1}{$ \text{\usefont{U}{BOONDOX-calo}{m}{n}E} \hspace{0.5pt} $}}}}
\def\SB{\text{\raisebox{- 2 \depth}{\scalebox{1.1}{$ \text{\usefont{U}{BOONDOX-calo}{m}{n}B} \hspace{0.5pt} $}}}}
\def\SM{\text{\raisebox{- 2 \depth}{\scalebox{1.1}{$ \text{\usefont{U}{BOONDOX-calo}{m}{n}M} \hspace{0.5pt} $}}}}
\def\SO{\text{\raisebox{- 2 \depth}{\scalebox{1.1}{$ \text{\usefont{U}{BOONDOX-calo}{m}{n}O} \hspace{0.5pt} $}}}}
\def\SP{\text{\raisebox{- 2 \depth}{\scalebox{1.1}{$ \text{\usefont{U}{BOONDOX-calo}{m}{n}P} \hspace{0.5pt} $}}}}
\def\SC{\text{\raisebox{- 2 \depth}{\scalebox{1.1}{$ \text{\usefont{U}{BOONDOX-calo}{m}{n}C}$}}}}
\def\valpha{\text{\scalebox{0.86}[1.02]{$\alpha$}}}   
\def\vepsilon{\upvarepsilon}
\def\vnu{\text{{\scalebox{0.86}[1]{$\nu$}}}} 
\def\vkappa{\text{{\scalebox{0.86}[1.1]{$\kappa$}}}} 
\def\vchi{\text{\scalebox{0.9}[1.06]{$\chi$}}}
\def\vQ  {\text{\scalebox{0.9}[1]{$Q$}}}
\newcommand{\BZ}{{\mathbf {Z}}}
\newcommand{\ra}{\rightarrow} 
\def\sumx{\sideset{}{^\star}\sum}
\def\sumd{\sideset{}{^{\delta}}\sum}
\def\mod{\mathrm{mod}\,  }
\def\nd{\mathrm{d}}
\def\lp {\left (}
\def\rp {\right )}
\def\shskip{\hspace{0.5pt}}
\newcommand{\delete}[1]{}
\theoremstyle{plain}
\newtheorem{thm}{Theorem} \newtheorem{cor}[thm]{Corollary}
\newtheorem{coro}{Corollary}[section]
\newtheorem{lem}{Lemma}[section]
\newtheorem{theorem}{Theorem}[section] 
\newtheorem*{thm*}{Theorem}
\theoremstyle{remark} 
\newtheorem{remark}{Remark}[section] 
\newtheorem{defn}{Definition}[section]
\numberwithin{equation}{section}
\begin{document}
	

	\title[Non-vanishing of Hecke--Maass $L$-functions]{{On the Effective Non-vanishing of Hecke--Maass $L$-functions at Special Points}}

	\begin{abstract}
	 In this paper, we consider the non-vanishing problem for the family of special Hecke--Maass $L$-values $ L (1/2+it_f, f) $ with $f (z)$ in an orthonormal basis of (even or odd) Hecke--Maass cusp forms of Laplace eigenvalue $1/4 + t_f^2$ ($t_f > 0$). We prove that 33\% of $L (1/2+it_f, f)$  for $ t_f \leqslant T$ do not vanish as $T \rightarrow \infty$. For comparison, it is known that the non-vanishing proportion is at least 25\% for the central  $L$-values $L (1/2, f)$. Further, 33\% may be raised to 50\% conditionally on the generalized Riemann hypothesis. Moreover, we prove non-vanishing results on short intervals  $|t_f-T| \leqslant T^{\mu}$ for any $0 < \mu < 1$. However, it is a  curious case that the Riemann hypothesis does not yield better result for small $0 < \mu \leqslant 1/2$.  
	
	\end{abstract}
	
	\author{Zhi Qi}
	\address{School of Mathematical Sciences\\ Zhejiang University\\Hangzhou, 310027\\China}
	\email{zhi.qi@zju.edu.cn}
	
	\thanks{The author was supported by National Key R\&D Program of China No. 2022YFA1005300.}

	\subjclass[2020]{11M41, 11F72}
	\keywords{Maass forms, $L$-functions, Kuznetsov formula.}
	
	\maketitle
	
	\vspace{-20pt}
	
	{\small \tableofcontents}

	\section{Introduction}
	
	

\subsection{Definitions} \label{defns} 
Let $ \SB $ be an orthonormal basis of Hecke--Maass  cusp forms  on the modular surface $\mathrm{SL}_2  ( \BZ) \backslash \mathbf{H} $.  Assume that each $f \in \SB $ is either even or odd  in the sense that $ f (- \widebar{z}) = (-1)^{\delta_f} f  (z)$ for $\delta_f = 0$ or $ 1$.   Let  $ \SB_0 $ or $\SB_1 $ be the subset of even or odd forms in $ \SB $ respectively. For $f \in \SB $, let   $\lambda_{f} = s_{f} (1-s_{f})$   
be its Laplace eigenvalue, 
with $s_f = 1/2+ i t_f$ ($t_f > 0$). 
The Fourier expansion of $f (z)$ reads:
\begin{align*}
	f (x+iy) =   \sqrt{y} \sum_{n \neq 0}  \rho_f (n) K_{i t_f} (2\pi |n| y) e (n x), 
\end{align*}
where as usual $K_{\vnu} (x)$ is the $K$-Bessel function and $e (x) = \exp (2\pi i x)$.  Let $\lambda_f (n)$ ($n \geq 1$) be the Hecke eigenvalues of $f (z)$. It is well known that  $\rho_f (\pm n) =   \allowbreak \rho_f (\pm 1)  \lambda_f (n) $, while $\rho_f (-1) = (-1)^{\delta_f} \rho_f (1)$.  
Define the harmonic weight 
\begin{align*}
	\omega_f = \frac {|\rho_f (1)|^2} {\cosh \pi t_f} .    
\end{align*}
The Hecke--Maass $L$-function of $f (z)$ is defined by
\begin{align*}
L (s, f) = \sum_{n    =1}^{\infty}\frac{ \lambda_f (n)  }{n^s} ,
\end{align*}
for $\mathrm{Re} (s) > 1$, and by analytic continuation on the whole complex plane.

\subsection{Effective Non-vanishing Results}

As the motivation of this paper,  the author \cite{Qi-GL(2)xG(2)-RS} recently established an effective refinement (recorded below) of the  non-vanishing result of Luo \cite{Luo-Weyl}  for the Rankin--Selberg special $L$-values  $L (s_f, \vQ \otimes f)$. The non-vanishing condition for  $L (s_f, \vQ \otimes f)$   arises in the Phillips–Sarnak deformation theory of Maass cusp forms \cite{Phillips-Sarnak} and was studied by Deshouillers, Iwaniec, and Luo \cite{DI-Nonvanishing,Luo-Non-Vanishing,Luo-Weyl,Luo-2nd-Moment}. 

\begin{thm*}[{\rm\cite{Qi-GL(2)xG(2)-RS}}]
	Let $\vQ (z)$ be a fixed holomorphic cusp newform of square-free level $q$. 
	Let $3/4 < \mu < 1$. We have 
	\begin{align} \label{1eq: main, RS, 0}
	\liminf_{T \ra \infty}	\frac {	\text{\rm \small \bf \#} \big\{ f \in \SB   :    t_f   \leqslant   T , \, L (s_f , \vQ \otimes  f    ) \neq 0
			\big\}} { \text{\rm \small \bf \#} \big\{ f \in \SB   :    t_f   \leqslant   T 
			\big\} } \geqslant     \frac { \gamma (\vQ)} {11}      ,  
	\end{align} 
	\begin{align} \label{1eq: main, RS}
	\liminf_{T \ra \infty}	\frac {	\text{\rm \small \bf \#} \big\{ f \in \SB   :   |t_f - T| \leqslant   T^{\mu}, \, L (s_f , \vQ \otimes  f    ) \neq 0
			\big\}} { \text{\rm \small \bf \#} \big\{ f \in \SB   :   |t_f - T| \leqslant   T^{\mu} 
			\big\} } \geqslant  \gamma (\vQ)    \frac {4\mu-3} {4\mu+7}      ,  
	\end{align} 
where $\gamma (\vQ)$ is the Euler product\shskip {\rm:}
	\begin{align}\label{1eq: beta(Q)}
		\begin{aligned}
			\prod_{p \nmid q}  \bigg(1 - \frac {a(p)^2} {p(p+1) } \bigg)^{-1}  \bigg(1 + \frac {a(p)^2} {p+1 } \bigg)  \bigg( 1 - \frac {a(p)^2 - 2} {p} + \frac 1 {p^2}  \bigg)  \bigg( 1 - \frac {1} { p } \bigg)^2  \cdot  \prod_{p | q} \bigg( 1 - \frac 1 {p^2 } \bigg) ,
		\end{aligned}
	\end{align} 
	with $a (p)$ the $p$-th Hecke eigenvalue of $\vQ (z)$. 
\end{thm*}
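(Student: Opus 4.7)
The strategy is the classical mollified first-and-second-moment method, carried out spectrally via the Kuznetsov formula.

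\emph{Setup and Cauchy--Schwarz.} I would introduce a Dirichlet-polynomial mollifier $M_f = \sum_{n \leq N} x_n \lambda_f(n)/\sqrt{n}$, where $N = T^\theta$ for a parameter $\theta > 0$ to be optimized, and the coefficients $x_n = \mu(n)\, P(\log(N/n)/\log N)$ are supported on squarefree $n$ coprime to $q$, with $P$ a polynomial also to be optimized. Let $h$ be a smooth nonnegative weight, supported in $[-T^{1-\epsilon}, T + T^{1-\epsilon}]$ for the global assertion \eqref{1eq: main, RS, 0} and localized on $|t - T| \leq T^\mu$ for the short-interval assertion \eqref{1eq: main, RS}, and form
\begin{align*}
	\mathcal{M}_1 &= \sum_{f \in \SB} \omega_f\, h(t_f)\, L(s_f, \vQ \otimes f)\, M_f, \\
	\mathcal{M}_2 &= \sum_{f \in \SB} \omega_f\, h(t_f)\, \bigl|L(s_f, \vQ \otimes f)\, M_f\bigr|^2.
\end{align*}
Cauchy--Schwarz yields $\sum_{f:\, L \neq 0} \omega_f h(t_f) \geq |\mathcal{M}_1|^2/\mathcal{M}_2$, and the harmonic weights can be stripped by a standard Iwaniec--Luo--Sarnak removal argument (using $\omega_f^{-1} \ll t_f^{\epsilon}$ on average) to pass to the natural counting measure in the denominator.

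\emph{Moment computation.} For each moment I would insert an approximate functional equation to truncate $L(s_f, \vQ \otimes f)$ to length essentially the square root of its analytic conductor, expand using Hecke multiplicativity $\lambda_f(m)\lambda_f(n) = \sum_{d \mid (m,n)} \lambda_f(mn/d^2)$ in the second moment, and apply the Kuznetsov trace formula to convert the spectral average over $f \in \SB$ into a diagonal contribution plus a transform of Kloosterman sums weighted by the Bessel transform of $h$. The diagonal yields the main term; its arithmetic part, after unfolding Rankin--Selberg data of $\vQ \otimes \vQ$ and the local factors of $L(s, \mathrm{sym}^2 \vQ)$, rearranges into precisely the Euler product $\gamma(\vQ)$ displayed in \eqref{1eq: beta(Q)}. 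The Kloosterman contribution is bounded using Weil's bound together with stationary-phase analysis on the Bessel kernel.

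\emph{Optimization.} Once the moments are evaluated explicitly as functionals of $P$ and $\theta$, the ratio $|\mathcal{M}_1|^2/\mathcal{M}_2$ becomes a Conrey--Iwaniec--Soundararajan-type variational problem whose extremum yields the stated proportions $\gamma(\vQ)/11$ in the global case and $\gamma(\vQ)(4\mu-3)/(4\mu+7)$ in the short-interval case.

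\emph{Principal difficulty.} The crux is the off-diagonal in $\mathcal{M}_2$ under the short-interval weight. With $h$ localized on scale $T^\mu$ around $T$, the Kuznetsov--Bessel transform is effectively supported on moduli $c \lesssim \sqrt{mn}/T$ and oscillates with frequency $T$; balancing this against the AFE length (of size a power of $t_f$) and the size of the diagonal main term forces $\mu > 3/4$. The resulting trade-off between the admissible mollifier length $N = T^\theta$ and the Kloosterman savings from Weil's bound is exactly what produces the shape $(4\mu-3)/(4\mu+7)$ of the short-interval proportion, and this off-diagonal analysis is where essentially all of the real work will be concentrated.
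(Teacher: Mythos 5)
This statement is not proved in the present paper; it is quoted verbatim from the author's earlier work \cite{Qi-GL(2)xG(2)-RS} as motivation, with the present paper proving instead the analogous Theorems \ref{thm: non-vanishing} and \ref{thm: non-vanishing, short} for the untwisted $L(s_f, f)$. Your outline nevertheless matches the methodology of that analogue (and, by the paper's own cross-references, of \cite{Qi-GL(2)xG(2)-RS} itself): approximate functional equations, Kuznetsov, Bessel/stationary-phase analysis on a Gaussian spectral weight, Weil-bounded off-diagonal, a M\"obius-supported mollifier, Cauchy--Schwarz, harmonic-weight removal \`a la Kowalski--Michel, and the Ivi\'c--Jutila unsmoothing to pass from the Gaussian weight to sharp intervals and then (by dyadic summation) to $t_f \leqslant T$. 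Two refinements worth noting if you were to flesh this out: the paper's smooth weight is the Gaussian $\exp(-(t_f-T)^2/\varPi^2)$, with a separate unsmoothing lemma converting to sharp cutoffs, rather than a bump supported on $[0,T]$ as you describe for the long range; and the paper's mollifier optimization is done by a change of variables $x_n \mapsto y_h$ via M\"obius inversion leading to the explicit choice $y_h = \mu(h)/\Xi(M)$, which is essentially the $P \equiv 1$ case of your polynomial family, whereas your formulation leaves both $P$ and $\theta$ free and does not indicate how the precise rational function $(4\mu-3)/(4\mu+7)$ and the threshold $\mu > 3/4$ (driven by the larger analytic conductor $\asymp T^4$ of $L(s_f, \vQ\otimes f)$) emerge from the length constraint on the mollifier.
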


It is natural to consider the more fundamental family of Hecke--Maass $L$-functions $L (s, f)$ and to see how large the non-vanishing proportion  at $s = s_f$ one might attain.  
It turns out that our non-vanishing results for $L (s_f, f)$ are more satisfactory, even compared to those for central $L$-values $L(1/2, f)$. 

To be explicit, our main theorems for the non-vanishing statistic of the special $L$-values $L (s_f, f)$ are as follows. 

\begin{thm}
	\label{thm: non-vanishing}
	 We have 
	\begin{align} \label{1eq: main, long}
		\liminf_{T \ra \infty}		\frac {	\text{\rm \small \bf \#} \big\{ f \in \SB_{\delta}  :    t_f   \leqslant   T , \, L (s_f ,   f    ) \neq 0
			\big\}} { \text{\rm \small \bf \#} \big\{ f \in \SB_{\delta}  :  t_f \leqslant   T 
			\big\} } \geqslant    \frac 1 3      ,
	\end{align} 
for  $\delta = 0, 1${\rm;} 
also {\rm\eqref{1eq: main, long}} is  valid on the entire   $\SB$. 
\end{thm}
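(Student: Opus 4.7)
\emph{Strategy.} My plan is to apply the mollification method. Introduce a Dirichlet polynomial
\[
M(f) = \sum_{n \leq N} \frac{x_n \lambda_f(n)}{n^{s_f}}, \qquad N = T^{\theta}, \quad 0 < \theta < 1,
\]
whose coefficients $x_n$ will be chosen to mimic $L(s_f, f)^{-1}$, together with a smooth test function $h(t)$ concentrated on $t \asymp T$. Cauchy--Schwarz then gives
\[
\bigg|\sum_{f \in \SB_{\delta}} \omega_f h(t_f) L(s_f, f) M(f)\bigg|^{2} \leq M_{2}(h) \cdot \sum_{\substack{f \in \SB_{\delta} \\ L(s_f, f) \neq 0}} \omega_f h(t_f),
\]
where $M_{2}(h) = \sum_f \omega_f h(t_f) |L(s_f, f) M(f)|^{2}$. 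The problem thus reduces to the asymptotic evaluation of the mollified first and second moments.

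\emph{First moment.} I would expand $L(s_f, f)$ by its approximate functional equation---which has the shortened effective length $\sqrt{t_f}$, because the shift $s_f = 1/2 + it_f$ collapses one of the two archimedean gamma factors---and apply the Kuznetsov trace formula to the resulting spectral averages
\[
\sum_f \omega_f h(t_f) \frac{\lambda_f(n)\lambda_f(m)}{(nm)^{s_f}}.
\]
The modulation $(nm)^{-it_f}$ shifts the Kuznetsov test function along the spectral variable, which can be absorbed into the Bessel transform. The diagonal contribution supplies the main term; off-diagonal Kloosterman terms are harmless provided $N$ is not too close to $T$.

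\emph{Second moment.} A parallel but more intricate calculation handles $|L(s_f, f) M(f)|^{2}$. Expansion via Hecke relations produces a quadruple Dirichlet series in $(n_1, n_2, m_1, m_2)$; after Kuznetsov its diagonal reorganizes into a bilinear form in the coefficients $x_n$ whose leading asymptotic is governed by the Rankin--Selberg convolution at the edge of absolute convergence. Optimizing $x_n$---e.g.\ as a truncated M\"obius convolution weighted by a polynomial $P(\log(N/n)/\log N)$---and pushing $\theta$ as close to $1$ as the off-diagonal analysis permits, one obtains $|M_{1}(h)|^{2}/M_{2}(h) \to 1/3$ times the natural spectral mass. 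Removal of the harmonic weights $\omega_f$ is standard via a second Kuznetsov-based positivity argument (in the spirit of Iwaniec--Luo--Sarnak), and splitting by parity $\delta = 0, 1$ is effected by the even/odd decomposition of the Kuznetsov test function. The factor of $1/3$ (rather than the $1/4$ familiar from $L(1/2, f)$) stems from the shortened AFE and the resulting simpler second-moment main term at the special point.

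\emph{Main obstacle.} The delicate step is controlling the off-diagonal Kuznetsov terms in the second moment as $N \to T$: the relevant Kloosterman sums sit in a transitional range where neither the Weil bound nor the spectral large sieve alone is decisive. The oscillatory coupling between the shifts $(nm)^{-it_f}$ and the Bessel transform must be unravelled by stationary phase, in the spirit of the techniques developed in \cite{Qi-GL(2)xG(2)-RS} for Rankin--Selberg special values. Once this coupling is controlled and $\theta$ is shown to be admissible arbitrarily close to $1$, the $1/3$ proportion drops out of the mollifier optimization.
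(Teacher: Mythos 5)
Your strategy matches the paper's almost exactly: mollification, Cauchy--Schwarz, approximate functional equation with shortened effective length $\sqrt{t_f}$ from the collapse of one archimedean factor, Kuznetsov trace formula on the even/odd sub-bases, M\"obius-type mollifier optimization, and harmonic-weight removal. However, you misidentify the key numerical threshold. You write that the delicate step is controlling the off-diagonal in the second moment ``as $N\to T$'' and speak of pushing ``$\theta$ as close to $1$ as the off-diagonal analysis permits'' to arrive at $1/3$; but the classical mollifier optimization returns a non-vanishing proportion $\theta/(1+\theta)$, so $\theta\to 1$ would give $1/2$, not $1/3$. What the paper actually establishes (Lemma \ref{lem: M2}) is that the mollifier length $M=T^{\varDelta}$ is constrained to $\varDelta<\min\{1/2,(2\vnu+1)/4\}$: the first error term $(\varPi+m_1+m_2)\sqrt{T}$ in the twisted second-moment asymptotic \eqref{2eq: C2(m)}, once summed against the mollifier weights, overwhelms the main term as soon as $M$ reaches $T^{1/2}$, and it is $\varDelta\to 1/2$ that yields $\varDelta/(1+\varDelta)\to 1/3$. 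Your intuition that the shortened AFE at $s_f$ is what upgrades $1/4$ to $1/3$ is correct, but the corresponding barrier shift is from $\theta<1/3$ (the $L(1/2,f)$ case) to $\theta<1/2$ here, not to $\theta<1$. If you carried out the off-diagonal analysis expecting admissibility up to $N\sim T$ you would find it failing already at $N\sim T^{1/2}$, well short of where you predict the obstruction.
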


 On the long interval $t_f \leqslant T$, Theorem \ref{thm: non-vanishing}  manifests that at least 33\% of the special $L$-values $L (s_f, f) $ do not vanish for $f \in \SB$, while it is proven in \cite{BHS-Maass,Qi-Liu-Moments} that 25\% of the central $L$-values $L(1/2, f)$ do not vanish for $f \in \SB$, although, in the latter case, $L (1/2, f) = 0$ trivially on the whole odd sub-basis $\SB_1$  as the root number $\epsilon_f = - 1$. 

\begin{thm}\label{thm: non-vanishing, short} 
Let $ 0  < \mu < 1$. 
We have 
	\begin{align} \label{1eq: main}
\liminf_{T \ra \infty}		\frac {	\text{\rm \small \bf \#} \big\{ f \in \SB_{\delta}  :   |t_f - T| \leqslant   T^{\mu}, \, L (s_f ,   f    ) \neq 0
		\big\}} { \text{\rm \small \bf \#} \big\{ f \in \SB_{\delta}  :   |t_f - T| \leqslant   T^{\mu} 
		\big\} } \geqslant   \min \bigg\{ \frac 1 3, \frac {2\mu + 1} {2\mu + 5 }  \bigg\}   .
	\end{align} 
\end{thm}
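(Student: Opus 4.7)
The plan is to employ the standard mollifier-method lower bound, combining the approximate functional equation for $L(s_f, f)$ with Kuznetsov's trace formula tailored to the short spectral window. Introduce a Dirichlet-polynomial mollifier
\begin{align*}
M(f) = \sum_{\ell \leqslant L} \frac{x_{\ell}\, \lambda_f(\ell)}{\sqrt{\ell}}
\end{align*}
of length $L = T^{\theta}$ with free real coefficients $(x_\ell)$, together with a nonnegative Schwartz weight $\phi$ whose Fourier transform is compactly supported, rescaled as $\phi_\mu(t) = \phi((t-T)/T^{\mu})$ so as to localize the spectral sum to $|t_f-T| \leqslant T^\mu$. By Cauchy--Schwarz,
\begin{align*}
\sum_{\substack{f \in \SB_\delta \\ L(s_f,f)\neq 0}} \omega_f\, \phi_\mu(t_f)\ \geqslant\ \frac{|\mathcal{M}_1|^2}{\mathcal{M}_2},
\end{align*}
where $\mathcal{M}_1 = \sum_f \omega_f \phi_\mu(t_f) M(f) L(s_f,f)$ and $\mathcal{M}_2 = \sum_f \omega_f \phi_\mu(t_f) |M(f) L(s_f,f)|^2$. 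Dividing by the weighted count $\sum_f \omega_f \phi_\mu(t_f) \asymp T^{1+\mu}$ then yields the non-vanishing proportion.

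The decisive feature of the special point $s = s_f$ is the collapse of one $\Gamma$-factor: in the completed $L$-function, the factor $\Gamma_{\BR}(1/2 - it_f + it_f) = \Gamma_{\BR}(1/2)$ carries no $t_f$-dependence, so the analytic conductor of $L(s_f, f)$ is merely $\asymp t_f$ and its AFE has effective length $\asymp t_f^{1/2}$. Correspondingly, $|L(s_f,f)|^2$ has analytic conductor $\asymp t_f^2$ and AFE length $\asymp t_f$. These shorter Dirichlet polynomials, compared to those needed for $L(1/2,f)$, are what drive the improved proportion.

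The evaluation of $\mathcal{M}_1$ and $\mathcal{M}_2$ then proceeds in three standard steps: (i) expand $L(s_f, f)$ (resp.\ $|L(s_f,f)|^2$) via its AFE; (ii) fuse with the mollifier through the Hecke relation $\lambda_f(m)\lambda_f(n) = \sum_{d \mid (m,n)} \lambda_f(mn/d^2)$; (iii) apply Kuznetsov's formula on $\SB_\delta$ to each spectral sum $\sum_f \omega_f \lambda_f(m) \phi_\mu(t_f)$. The delta term supplies the main contribution: $\mathcal{M}_1$ becomes an explicit linear form and $\mathcal{M}_2$ a positive-definite quadratic form in $(x_\ell)$. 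With the M\"obius-type ansatz $x_\ell = \mu(\ell)\, P(\log(L/\ell)/\log L)$ for a polynomial $P$, direct optimization---at the maximal admissible length $\theta = 1/2 + \mu$---produces a ratio of the shape $\theta/(\theta + 2)$, which simplifies to $(2\mu+1)/(2\mu+5)$. In the regime $\mu \geqslant 1/2$, the proportion instead saturates at $1/3$, reflecting an intrinsic cap already present in Theorem \ref{thm: non-vanishing}; the minimum in \eqref{1eq: main} records this two-regime behaviour.

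The principal obstacle is controlling the Kloosterman contribution to $\mathcal{M}_2$ over the short spectral window. The Bessel transform appearing in Kuznetsov inherits the Fourier footprint of $\phi_\mu$, whose scale is $T^{-\mu}$ rather than $T^{-1}$; the oscillatory analysis of this transform near its transition range, combined with the Weil bound for Kloosterman sums, must be pushed just far enough to secure $\theta \leqslant 1/2 + \mu + o(1)$. This single constraint is what caps the mollifier and explains why the generalized Riemann hypothesis cannot lower the threshold for $\mu \leqslant 1/2$: the bottleneck there is the combinatorial width of the short spectral window, not the pointwise size of the individual $L$-values.
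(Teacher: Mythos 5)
Your proposal follows the paper's strategy step for step: Cauchy--Schwarz with a Dirichlet-polynomial mollifier, the approximate functional equation at $s=s_f$ (with the key observation that one $\Gamma$-factor degenerates so the analytic conductor drops to $\asymp t_f$), fusion via the Hecke relation, Kuznetsov on $\SB_{\delta}$ to isolate the diagonal and bound the Kloosterman contribution, and M\"obius-type optimization of the coefficients; this is exactly \S\ref{sec: mollified} built on Theorems~\ref{thm: C1(m)} and~\ref{thm: C2(m)}. One small notational slip: you write the mollifier length as $L=T^{\theta}$ with optimal $\theta=1/2+\mu$, but the paper's mollifier length is $M=T^{\varDelta}$ with $\varDelta<\min\{1/2,(2\mu+1)/4\}$, i.e.\ half your exponent --- the proportion formula you use, $\theta/(\theta+2)$, coincides with the paper's $\varDelta/(\varDelta+1)$ only after setting $\theta=2\varDelta$, so your stated mollifier length is off by a factor of two in the exponent even though the final ratio is correct.
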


This is an effective refinement of the result of Xu \cite{Xu-Nonvanishing} in which he proved that (on the even  sub-basis $\SB_0$) the ratio in \eqref{1eq: main} is bounded below by an unexplicit  constant  as $T \ra \infty$.

\begin{thm}
	\label{thm: non-vanishing, 3}
	Assume the Riemann hypothesis for every $L (s, f)$ with $ f \in \SB_{\delta} $ and for all Dirichlet $L$-functions  {\rm(}including the Riemann  zeta function{\rm)}. Then 
	\begin{align} \label{1eq: main, long, RH}
	\liminf_{T \ra \infty}		\frac {	\text{\rm \small \bf \#} \big\{ f \in \SB_{\delta}  :    t_f   \leqslant   T , \, L (s_f ,   f    ) \neq 0
			\big\}} { \text{\rm \small \bf \#} \big\{ f \in \SB_{\delta}  :  t_f \leqslant   T 
			\big\} } \geqslant    \frac 1 2    .
	\end{align}  
\end{thm}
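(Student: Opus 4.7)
The plan is to extend the mollifier--Cauchy--Schwarz argument underlying Theorem \ref{thm: non-vanishing} by exploiting GRH to enlarge the admissible mollifier length and to sharpen the treatment of the attendant sums. Writing $s_f = 1/2 + it_f$, I would introduce a mollifier of the shape
\begin{align*}
M_f(\theta;P) \;=\; \sum_{m \leqslant T^{\theta}} \frac{\mu_f(m)\, P\!\lp \tfrac{\log(T^{\theta}/m)}{\theta \log T} \rp}{m^{s_f}},
\end{align*}
where $\mu_f(m)$ is the $m$-th Dirichlet coefficient of $1/L(s,f)$ and $P$ is a polynomial with $P(0)=0$, $P(1)=1$ to be chosen at the end. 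For a smooth cut-off $h_T$ localizing $t_f \in [0,T]$, Cauchy--Schwarz with the harmonic weights gives
\begin{align*}
\sum_{f \in \SB_{\delta}} h_T(t_f)\, \omega_f\, \mathbf{1}[L(s_f,f)\neq 0] \;\geqslant\; \frac{\Big|\sum_f h_T(t_f)\,\omega_f\, L(s_f,f)\, M_f(\theta;P)\Big|^{2}}{\sum_f h_T(t_f)\,\omega_f\, |L(s_f,f)\,M_f(\theta;P)|^{2}}.
\end{align*}

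I would evaluate the first mollified moment, as in the unconditional argument, by inserting the approximate functional equation for $L(s_f,f)$ and applying the Kuznetsov formula; the diagonal yields a main term of size $\asymp \theta\log T$, while the Kloosterman off-diagonal is acceptable provided $T^{\theta} \leqslant T^{1-\varepsilon}$. The decisive step is the second mollified moment: open one copy of $L$ by its approximate functional equation and the other by Dirichlet series, apply Kuznetsov, and extract the diagonal as an explicit quadratic functional $c_{\delta}\cdot(\log T)\,\mathcal{Q}(\theta,P)$ of $P$, provided the remaining off-diagonal can be controlled. Unconditionally this control forces $\theta < 1/2$; under GRH for $L(s,f)$ the conditional Lindel\"{o}f bound $L(s_f,f) \ll_{\varepsilon} T^{\varepsilon}$, together with improved off-diagonal estimates, lets $\theta$ be taken arbitrarily close to $1$.

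A standard variational calculation in the spirit of Conrey--Iwaniec--Soundararajan then optimizes the ratio of first to second moments over the polynomial $P$ for each fixed $\theta$; letting $\theta \to 1^{-}$ pushes the optimal lower bound up to $1/2$. The transition from harmonic to natural density is handled by the Kowalski--Michel device, expanding $\omega_f^{-1}$ as a short Dirichlet polynomial in the $\mathrm{sym}^{2} f$ coefficients and invoking GRH for $L(s, \mathrm{sym}^{2} f)$ and for Dirichlet $L$-functions to secure the pointwise bound $\omega_f^{-1} \ll T^{\varepsilon}$; this propagates the harmonic lower bound to a natural-density lower bound without loss.

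The principal obstacle I anticipate is the second-moment analysis when the mollifier has length approaching $T$: after Kuznetsov, the off-diagonal involves Kloosterman sums of moduli up to $\asymp T$ twisted by Hecke-eigenvalue sums of length up to $T^{2\theta}\approx T^{2}$, and the Bessel-kernel cancellation by itself gives only borderline savings. The conditional pointwise bound $L(s_f,f) \ll T^{\varepsilon}$ supplies precisely the $T^{\varepsilon}$ margin needed to close the argument. This also accounts for the short-interval phenomenon noted in the introduction: when the spectral window is shorter than $T^{1/2}$, the Kuznetsov averaging is itself too short for the GRH-powered margin to outrun the loss of spectral cancellation, so RH yields no improvement for $\mu \leqslant 1/2$.
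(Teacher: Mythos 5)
Your proposal takes a genuinely different route from the paper, but it contains a gap at the crucial step. The paper does \emph{not} attempt to lengthen the mollifier under GRH. Instead, it proves the extended density theorem (Theorem \ref{thm: density}): starting from the explicit formula of Rudnick--Sarnak, the $1$-level density is reduced to a prime sum $\sum_p \lambda_f(p)\cos(2t_f\log p)\log p / (\sqrt{p}\log T)$, the Kuznetsov formula converts the spectral average into sums of Kloosterman sums $S(p,\pm 1;c)$ over primes, and the Luo identity rewrites these as variant Kloosterman sums $V_q(p,\pm1;c)$; RH for Dirichlet $L$-functions is then used, in the spirit of \cite{ILS-LLZ}, to establish equidistribution of primes in arithmetic progressions to the required uniformity so as to extend the support of $\hat{\upphi}$ to $(-v(\mu),v(\mu))$. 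The non-vanishing bound $1-\tfrac{1}{v(\mu)}$ then follows from the Fej\'er kernel optimization, and $v(\mu)\to 2$ as $\mu\to 1^-$ gives $1/2$. RH for $L(s,f)$ enters only to ensure the zeros $\gamma_f$ are real, so that $D(f;\upphi;T)\geqslant \operatorname{ord}_{s=s_f}L(s,f)$ for the nonnegative Fej\'er test function.

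The gap in your proposal: you claim that ``the conditional Lindel\"of bound $L(s_f,f)\ll T^{\vepsilon}$ supplies precisely the $T^{\vepsilon}$ margin needed'' to take the mollifier length $\theta$ close to $1$. This is not right. Once you have opened $|L(s_f,f)|^2$ via the approximate functional equation (a sum of length $\asymp T$, whose length is not shortened by GRH) and applied Kuznetsov, you are no longer bounding $L$-values pointwise — you are estimating off-diagonal shifted-convolution sums of Kloosterman sums twisted by Bessel kernels. The constraint $\varDelta<\min\{1/2,(2\nu+1)/4\}$ in Lemma \ref{lem: M2} comes from the error terms in the asymptotic of the twisted second moment $\SC_2^{\delta}(m_1,m_2)$, and these error terms are arithmetic in nature; a $T^{\vepsilon}$ bound on the $L$-value does not shrink them. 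There is no indication that GRH for $L(s,f)$ or for Dirichlet $L$-functions breaks the mollifier-length barrier here, and the paper does not claim it. Your heuristic explanation of the short-interval phenomenon is likewise off: the paper's $v(\mu)=\min\{3\mu,1+\mu\}$ comes from an explicit comparison of the main term $\varPi T$ with the off-diagonal bound $T^{\vepsilon}(\varPi+T/\varPi)(\sqrt{P}+P/\varPi)$ at the end of \S 14, where the $T/\varPi$ factor from stationary phase, not the length of the Kuznetsov average, is what degrades the range of $v$ for $\mu\leqslant 1/2$.

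If you wish to salvage the mollifier route, you would need an independent argument that the off-diagonal in $\SC_2^{\delta}(m_1,m_2)$ (or $\SM_2^{\delta}$) admits a better bound under GRH; the relevant obstruction is the shifted convolution / Kloosterman sum estimation, and the Kowalski--Michel optimization of $P$ would then raise $\tfrac{\varDelta}{\varDelta+1}$ toward $\tfrac{1}{2}$, but the analytic input you cite does not deliver it. The paper's density-theorem approach sidesteps the shifted-convolution bottleneck entirely by working with primes and invoking GRH where it is genuinely effective — on equidistribution of primes in progressions.
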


Thus, on  the Riemann hypothesis, for $f \in \SB$ the non-vanishing percentage 33\% for $ L (s_f, f) $ is  raised drastically to 50\%, but 25\% for $L(1/2, f)$ is  raised only to 28\% (see Corollary \ref{cor: long}).  

\begin{thm}
	\label{thm: non-vanishing, 4}
Assume the Riemann hypothesis as in Theorem \ref{thm: non-vanishing, 3}.	 Then for  $1/2 < \mu < 1$, we have 
		\begin{align} \label{1eq: main, RH}
	\liminf_{T \ra \infty}		\frac {	\text{\rm \small \bf \#} \big\{ f \in \SB_{\delta}  :   |t_f - T| \leqslant   T^{\mu}, \, L (s_f ,   f    ) \neq 0
			\big\}} { \text{\rm \small \bf \#} \big\{ f \in \SB_{\delta}  :   |t_f - T| \leqslant   T^{\mu} 
			\big\} } \geqslant  \frac {\mu} {\mu+1}   .
	\end{align} 
\end{thm}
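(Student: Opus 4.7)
The plan is to follow the mollifier method of Theorem \ref{thm: non-vanishing, short} and invoke the Riemann hypothesis, as in Theorem \ref{thm: non-vanishing, 3}, to enlarge the admissible length of the mollifier. Let $h_{\mu} (t) \geqslant 0$ be a smooth weight concentrated on $|t - T| \leqslant T^{\mu}$, set $\mathcal H = \sum_{f \in \SB_{\delta}} \omega_{f} h_{\mu} (t_{f}) \asymp T^{1+\mu}$, and introduce a mollifier
$$ M (f) = \sum_{n \leqslant Y} \frac{x_{n} \lambda_{f} (n)}{\sqrt n} , \qquad Y = T^{\theta} , $$
of the usual M\"obius-supported shape, designed so that $L (s_{f}, f) M (f) \approx 1$ on average. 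Cauchy--Schwarz then bounds the harmonic-weighted non-vanishing proportion below by $|\mathcal M_{1}|^{2} / (\mathcal H \cdot \mathcal M_{2})$, where $\mathcal M_{1}$ and $\mathcal M_{2}$ are the mollified harmonic first and second moments.

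I would first evaluate $\mathcal M_{1} = \sum_{f} \omega_{f} h_{\mu} (t_{f}) L (s_{f}, f) M (f)$ by inserting an approximate functional equation for $L (s_{f}, f)$ (of effective length $\sim T$) and applying the Kuznetsov formula: the diagonal yields $\mathcal M_{1} \sim \mathcal H$, with the off-diagonal admissible for any $\theta < 1$. The harder step is the mollified second moment $\mathcal M_{2} = \sum_{f} \omega_{f} h_{\mu} (t_{f}) |L (s_{f}, f) M (f)|^{2}$: unfolding $|L (s_{f}, f)|^{2}$ via the approximate functional equation produces Dirichlet sums of total length $\sim T^{2}$, and Kuznetsov then reduces $\mathcal M_{2}$ to a diagonal main term (computable via shifted-moment asymptotics) plus a Kloosterman-type off-diagonal. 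The role of the Riemann hypothesis for $\zeta(s)$, the Dirichlet $L$-functions, and the $L (s, f)$ themselves is precisely to permit the contour shifts in the shifted-moment evaluation that extend the admissible range of $\theta$ up to the short-interval barrier $\theta = \mu$; this is the spectral analogue of the RH-step that drives the long-interval Theorem \ref{thm: non-vanishing, 3}.

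An optimal variational choice of the coefficients $\{x_{n}\}$ then yields a Cauchy--Schwarz ratio of the clean form $\theta/(\theta+1)$, and substituting the maximal admissible $\theta = \mu - \epsilon$ (permissible under RH once $\mu > 1/2$) produces the desired lower bound $\mu/(\mu+1)$. The harmonic weight $\omega_{f}$ is removed as in the unconditional proof of Theorem \ref{thm: non-vanishing, short} by a spectral large sieve argument.

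The main obstacle is the off-diagonal analysis of $\mathcal M_{2}$ on a short spectral interval: the Fourier-dual width $T^{-\mu}$ of $h_{\mu}$ caps the oscillation that the Kloosterman sums can absorb, and this geometric restriction --- not the RH-permissible contour shifts --- is what caps the admissible $\theta$ at $\mu$. It is precisely this phenomenon that explains why the Riemann hypothesis yields no improvement once $\mu \leqslant 1/2$ and gives only the partial improvement $\mu/(\mu+1)$ rather than $1/2$ for $1/2 < \mu < 1$.
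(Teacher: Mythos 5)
Your proposal takes a completely different route from the paper, and it has a genuine gap. The paper does \emph{not} prove Theorem \ref{thm: non-vanishing, 4} by extending the mollifier: it proves the extended density theorem (Theorem \ref{thm: density}) for the one-level density $\SD^{\delta}(\upphi; T, T^{\mu})$ with $\hat{\upphi}$ supported in $(-v(\mu), v(\mu))$, $v(\mu) = \min\{3\mu, 1+\mu\}$, and then runs the Iwaniec--Luo--Sarnak argument (\S \ref{sec: non-vnaishing, 2}): via the explicit formula and the optimal Fourier pair $\upphi(x) = (\sin(\pi v x)/\pi v x)^2$, it bounds $\sum_{m \geqslant 1} m\, p_m^{\delta}(T, T^{\mu}) < 1/v(\mu) + \vepsilon$, yielding $p_0^{\delta} > 1 - 1/v(\mu) - \vepsilon = \mu/(\mu+1) - \vepsilon$ for $1/2 < \mu < 1$. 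The role of RH for Dirichlet $L$-functions is confined to the prime-sum estimate \eqref{12eq: sum of Vq} inside the density theorem; it has nothing to do with contour shifts in mollified moments.

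Your plan would in fact fail at the second moment. The error term in Theorem \ref{thm: C2(m)} contains $\varPi\sqrt{T}$ from the off-diagonal Kloosterman sums (Lemma \ref{lem: O2(m)}), and after summing over $m_1, m_2 \leqslant M = T^{\varDelta}$ with weights $x_{m_i}/\sqrt{m_i}$ this contributes $\Gt \varPi\sqrt{T}\,M$, forcing $\varDelta < 1/2$ unconditionally; this barrier comes from the Weil bound and Poisson summation in \S \ref{sec: off}, not from $L$-function zeros, so assuming RH (for $\zeta$, for Dirichlet $L$-functions, or for $L(s,f)$) does not relax it. Thus the mollifier ratio $\varDelta/(\varDelta+1)$ is capped at $1/3$ for all $\mu \geqslant 1/2$, and your claimed admissibility of $\theta = \mu - \epsilon$ for $\mu > 1/2$ has no mechanism behind it. The numerical coincidence that $\theta/(\theta+1) = \mu/(\mu+1)$ when $\theta = \mu$ and that $1 - 1/v(\mu) = \mu/(\mu+1)$ when $v(\mu) = 1+\mu$ is accidental; the paper's 50\% for the long interval (Theorem \ref{thm: non-vanishing, 3}) would require $\varDelta \to 1$, far beyond what any RH-assisted mollified second moment asymptotic could deliver, which is a further sign that the mollifier is the wrong tool here.
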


These conditional results will be deduced from the extended density theorem for $L (s, f)$ at $s = s_f$. It is curious that for $1/3 < \mu \leqslant 1/2$, the method only yields 
\begin{align}\label{1eq: mu small}
\liminf_{T \ra \infty}		 \frac {	\text{\rm \small \bf \#} \big\{ f \in \SB_{\delta}  :   |t_f - T| \leqslant   T^{\mu}, \, L (s_f ,   f    ) \neq 0
	 	\big\}} { \text{\rm \small \bf \#} \big\{ f \in \SB_{\delta}  :   |t_f - T| \leqslant   T^{\mu} 
	 	\big\} } \geqslant  \frac {3\mu-1} {3\mu}     , 
\end{align}
but this is weaker than the unconditional  \eqref{1eq: main} in Theorem \ref{thm: non-vanishing, short}.   


\subsection{Twisted Spectral Moments} 
Our proof of Theorems \ref{thm: non-vanishing} and  \ref{thm: non-vanishing, short} will rely on the asymptotic formulae for the twisted first and second moments of $ L (s_f ,   f    ) $.  

Let $T, \varPi$ be large parameters such that $T^{\vepsilon} \leqslant \varPi \leqslant T^{1-\vepsilon}$.  Define the smoothly weighted twisted moments on the cuspidal spectrum: 
\begin{align}\label{2eq: moments M1}
	{\SC^{\delta}_{1} }    (m  ) = \sum_{f \in \SB_{\delta} } \omega_f \lambda_f (m) m^{-it_f}  L (s_f,    f     )    \exp \left(  - \frac {(t_f - T)^2}  {\varPi^2} \right) , 
\end{align}
\begin{align}\label{2eq: moments M2}
	\begin{aligned}
		\SC^{\delta}_{2}  (m_1, m_2) = \mathrm{Re}  \sum_{f \in \SB_{\delta} }    \omega_f   \frac {\lambda_f ( m_1  ) \lambda_f ( m_2  )  } {(m_1/m_2)^{it} }  |L (s_f,    f     )|^2     \exp \left(  - \frac {(t_f - T)^2}  {\varPi^2} \right)  . 
	\end{aligned}
\end{align}

\begin{thm}\label{thm: C1(m)} 
Let notation be as above. 	
 We have  
	\begin{align}\label{2eq: C1(m)}
		\SC^{\delta}_{1}   (m  ) = \frac {\varPi T} {\pi\sqrt{\pi}}  \cdot {\delta ({m, 1})}  + O_{  \vepsilon} \big((\varPi +m ) T^{1/2+\vepsilon} \big), 
	\end{align}
where $\delta (m ,1)$ is the Kronecker $\delta$ that detects $m = 1$.  
\end{thm}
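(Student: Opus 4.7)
The strategy is to combine an approximate functional equation (AFE) for $L(s_f, f)$ at the special point $s_f = 1/2 + i t_f$ with the Kuznetsov trace formula on $\SB_\delta$. Writing
$$L(s_f, f) = \sum_{n \geqslant 1} \frac{\lambda_f(n)}{n^{s_f}} V_1(n, t_f) + \varGamma (t_f) \sum_{n \geqslant 1} \frac{\lambda_f(n)}{n^{1 - s_f}} V_2(n, t_f) $$
with smooth cutoffs $V_{1, 2}$ and the gamma ratio $\varGamma (t_f) = \gamma(1 - s_f, f) / \gamma(s_f, f)$, the key structural feature is that $s_f - i t_f = 1/2$ makes one of the archimedean gamma factors bounded, so the analytic conductor is $C(s_f, f) \asymp t_f$ and the effective AFE length is only $\asymp \sqrt{t_f} \ll T^{1/2 + \vepsilon}$. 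Stirling's formula further shows that $\varGamma (t_f)$ carries the rapid phase $- 2 t_f \log(t_f / (\pi e))$.

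Next, I multiply by $\omega_f \lambda_f(m) m^{- i t_f}$ and apply the Hecke relation $\lambda_f(m) \lambda_f(n) = \sum_{d \shskip | \shskip (m, n)} \lambda_f(m n / d^2)$ to reduce $\SC^{\delta}_{1}(m)$ to linear combinations of spectral sums
$$\sum_{f \in \SB_\delta} \omega_f \lambda_f(m n / d^2) H_\pm (t_f), \qquad H_\pm (t) = (m n)^{\mp i t} n^{-1/2} V_{1, 2}(n, t) \exp\lp - \frac{(t - T)^2}{\varPi^2} \rp ,$$
in which the twist $m^{- i t_f}$ has fused with $n^{\mp i t_f}$ from the AFE (with an extra $\varGamma (t_f)$ in the dual piece). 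Applying the parity-separated Kuznetsov trace formula to each such sum produces a diagonal proportional to $\delta(m n / d^2, 1)$, a Kloosterman sum contribution weighted by the Bessel transform of $H_\pm$, and an Eisenstein contribution.

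The diagonal condition $m n / d^2 = 1$ with $d \shskip | \shskip (m, n)$ forces $m = n = d$, so the main term arises only when $m = 1$, $n = d = 1$ in the first piece. Using $V_1(1, t) = 1 + O(t^{- A})$ and $\tanh(\pi t) = 1 + O(e^{- \pi t})$, one computes
$$\frac{1}{\pi^2} \int_{-\infty}^{\infty} V_1(1, t) \shskip t \tanh(\pi t) \exp\lp - \frac{(t - T)^2}{\varPi^2} \rp \nd t = \frac{T \varPi \sqrt{\pi}}{\pi^2} + O(T^{- A}) = \frac{\varPi T}{\pi \sqrt{\pi}} . $$
The corresponding dual diagonal at $n = 1$ is killed by the rapid oscillation of $\varGamma (t)$ (phase derivative $\sim - 2 \log t$) against the Gaussian of width $\varPi$ via repeated integration by parts, giving decay $(\varPi \log T)^{- A}$, negligible once $\varPi \geqslant T^{\vepsilon}$. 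For $m \geqslant 2$, the $n = m$ diagonal term comes with the extra oscillating factor $(m \cdot m)^{\mp i t}$, whose non-stationary phase against the Gaussian yields Gaussian-type decay $\exp\lp - (\varPi \log m)^2 \rp$, easily absorbable into the error.

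The principal technical obstacle is bounding the Kloosterman contribution
$$\sum_{c \geqslant 1} \frac{S(m n / d^2, 1; c)}{c} \shskip \phi_\pm \lp \frac{4 \pi \sqrt{m n / d^2}}{c} \rp $$
uniformly in $\varPi \in [T^{\vepsilon}, T^{1 - \vepsilon}]$ and in $m$. A stationary-phase analysis of the Bessel transform $\phi_\pm$, balancing the twist phase $\mp t \log(m n)$ against the Bessel-kernel phase $t \log(t / x) - t$, localizes $\phi_\pm$ at $x \asymp T$ and hence $c \asymp \sqrt{m n} / T$; Weil's bound $|S(N, 1; c)| \leqslant (N, c)^{1/2} c^{1/2 + \vepsilon}$ combined with the AFE truncation $n \ll T^{1/2 + \vepsilon}$ then yields the stated error $O((\varPi + m) T^{1/2 + \vepsilon})$. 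The Eisenstein contribution is handled analogously via standard moment bounds for $\zeta(1/2 + i t)$. The hardest step is the uniform stationary-phase analysis of $\phi_\pm$ carrying both a Gaussian of arbitrary width $\varPi$ and the multiplicative twist $m^{- i t_f}$, which must be controlled simultaneously across the full admissible range of $\varPi$.
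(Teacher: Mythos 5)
Your outline captures the right skeleton — AFE at $s_f$, Kuznetsov for $\SB_{\delta}$, diagonal giving the main term, Kloosterman and Eisenstein pieces in the error — and your main-term computation and the non-stationary-phase killing of the $m>1$ diagonal are correct. However, there is a material structural difference from the paper and two gaps that would need to be repaired.

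The paper does \emph{not} use the balanced AFE. It applies \cite[Theorem 5.3]{IK} with a free balancing parameter $X$ and then chooses $X = T^{1/2+\vepsilon}$; this makes the \emph{entire} dual sum (the one carrying $\upepsilon(t_f)$) negligibly small by the decay of $V_1^{\delta}(nX;-t)$, at the price of extending the surviving sum to length $n\Lt T^{1+\vepsilon}$. You instead take the balanced form, keeping both sums of length $\asymp T^{1/2+\vepsilon}$. That choice forces you to analyze the \emph{dual off-diagonal}, i.e.\ the Kloosterman contribution carrying the extra archimedean phase of $\upepsilon(t)\sim t^{-2it}$. Your proposal addresses only the dual \emph{diagonal} (killed by $\upepsilon(t)$-oscillation against the Gaussian); the dual off-diagonal is not touched. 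It is not negligible: the extra $-2t\log t$ phase shifts the Bessel localization so that, for $n\Lt T^{1/2}$ and moderate $m$, the stationary point sits near $x\asymp\sqrt{mn}$, i.e.\ $c\asymp 1$, so the dual $c$-sum is non-empty. This is exactly the subtlety the paper avoids by design — see Remark~\ref{rem: gamma (t)}'s companion remark in \S\ref{sec: Bessel}, which notes that the Bessel transform of an $\upepsilon(t)$-twisted test function was treated in \cite{Qi-GL(2)xG(2)-RS} but is deliberately sidestepped here. Without this, your proof has a genuine gap.

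Second, your stationary-phase localization of the primary Kloosterman piece is off. You state $\phi_\pm$ localizes at $x\asymp T$, hence $c\asymp\sqrt{mn}/T$. But the test function in \eqref{4eq: defn h} carries the fused twist $y^{-2it}$ with $y=\sqrt{mn}$, and Corollary~\ref{cor: u<T} shows the transform is negligible only for $u = xy + x/y \Lt T$. With $x=4\pi\sqrt{mn}/c$ this gives $c\Lt mn/T$, not $\sqrt{mn}/T$. (This is also why the paper, with its $n\Lt T^{1+\vepsilon}$ range, needs a nontrivial Poisson-summation step in \S\ref{sec: off-diagonal} to win the bound $(\varPi+m)T^{1/2+\vepsilon}$; Weil plus the $c$-truncation alone is not enough.) Finally, the intermediate Hecke-relation step $\lambda_f(m)\lambda_f(n)=\sum_{d\shskip|\shskip(m,n)}\lambda_f(mn/d^2)$ is unnecessary detour here: the Kuznetsov formula in \eqref{2eq: Kuznetsov} already takes $\Delta_\delta(m,n)$ with two Hecke eigenvalues as input. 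The paper reserves the Hecke relation for the second moment.
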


\begin{thm}\label{thm: C2(m)}  
Let notation be as above. 	  Set \begin{align}\label{1eq: m, r}
	m = (m_1, m_2), \qquad r = \frac {m_1m_2}  {m^2}. 
\end{align}  
 Then 
	\begin{align}\label{2eq: C2(m)}
		\begin{aligned}
			\SC_{2}^{\delta}  (m_1, m_2 ) = \frac { \varPi T  } {\pi \sqrt{\pi r} }  & \bigg(  \bigg(   \log  {\frac T r}   + \gamma_{\delta}   \bigg) \varSigma (m)  - 2 \breve{\varSigma} (m)  \bigg)   \\
			& + O_{\vepsilon} \bigg(  T^{\vepsilon} \bigg(  (\varPi + m_1 + m_2) \sqrt{T} +  \frac { T  } {\sqrt{ r      } } +   \frac {  \varPi^3  } {T \sqrt{ r      } }  \bigg) \bigg)    , 
		\end{aligned}
	\end{align}
where 
\begin{align}\label{1eq: gamma}
	\gamma_{0 }  =    \gamma   - \log 8 \pi^2 - \frac {\pi} 2 ,  \qquad \gamma_1 = \gamma   - \log 8 \pi^2 + \frac {\pi} 2, 
\end{align} 
with $\gamma$   the Euler constant, and 
\begin{align}\label{1eq: Sigma}
	\varSigma   ( m    ) =	 {\sum_{d  | m     }  }  
	\frac {1} {d }, \qquad  \breve{\varSigma}   ( m  ) =	 {\sum_{d  | m      }  }  
	\frac {\log d } {d } . 
\end{align} 
\end{thm}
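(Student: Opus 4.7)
My plan is to evaluate $\SC_2^{\delta}(m_1, m_2)$ by combining an approximate functional equation for $|L(s_f, f)|^2$ with the Kuznetsov trace formula on the parity-$\delta$ subspace $\SB_\delta$. First, I would factor $|L(s_f, f)|^2 = L(1/2+it_f, f) L(1/2-it_f, f)$ and apply a symmetric approximate functional equation, so that $|L(s_f, f)|^2$ is represented essentially as
\begin{align*}
2\shskip\mathrm{Re} \sum_{n_1, n_2 \geqslant 1} \frac{\lambda_f(n_1) \lambda_f(n_2)}{\sqrt{n_1 n_2}} \left(\frac{n_2}{n_1}\right)^{it_f} V(n_1 n_2; t_f),
\end{align*}
with $V$ a smooth cutoff encoding the $\Gamma$-factors of the complete $L$-function and effectively supported in $n_1 n_2 \ll t_f^{2+\vepsilon}$. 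Multiplying in the twist $\lambda_f(m_1)\lambda_f(m_2)(m_1/m_2)^{-it_f}$ and using the Hecke relation $\lambda_f(m_i)\lambda_f(n_i) = \sum_{d_i | (m_i, n_i)} \lambda_f(m_i n_i/d_i^2)$ converts the inner summand into a bilinear product $\lambda_f(M_1)\lambda_f(M_2)$ with $M_i = m_i n_i/d_i^2$, ready for Kuznetsov. I would separate the $t_f$-dependence inside $V$ via a Mellin transform, leaving the Gaussian weight $\exp(-(t_f - T)^2/\varPi^2)$ and the phase $(m_2 n_2/m_1 n_1)^{it_f}$ as the principal analytic factors.

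Applying the Kuznetsov formula for parity $\delta$ splits the spectral sum into a diagonal ($\delta$-Kronecker) term, a continuous-spectrum contribution, and a Kloosterman term. The diagonal enforces $M_1 = M_2$, equivalently $m_1 n_1 d_2^2 = m_2 n_2 d_1^2$. With $m = (m_1, m_2)$ and $r = m_1 m_2/m^2$, this reduces to a divisor sum over the common value $k = M_i$, producing an arithmetic factor of shape $\varSigma(m)/\sqrt r$ from the leading piece and a correction $\breve\varSigma(m)/\sqrt r$ coming from a logarithmic expansion of the effective AFE length $t_f^2/(d_1 d_2)^2$. Integrating the resulting weight against the Plancherel density---$(t/\pi^2)\tanh \pi t$ for $\delta = 0$, $(t/\pi^2)\coth \pi t$ for $\delta = 1$---and shifting the Mellin contour back to reconstitute $V$, produces the asserted main term $(\varPi T/\pi \sqrt{\pi r}) \big( (\log(T/r) + \gamma_\delta) \varSigma(m) - 2\breve\varSigma(m) \big)$. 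The parity-dependent $\mp \pi/2$ inside $\gamma_\delta$ emerges from the difference between $\tanh \pi t$ and $\coth \pi t$ in the two Kuznetsov measures.

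For the Kloosterman contribution I would decompose dyadically in the modulus $c$, apply the Weil bound, and use stationary-phase asymptotics for the Bessel transforms $\check h_\pm$ of the weight. Careful tracking of $\varPi$, $T$, $m_1$, $m_2$ across the three regimes---moderate moduli, the long regime where the AFE length $\asymp T^2$ dominates, and the tight-concentration regime near $t_f \sim T$---produces respectively the three error pieces $(\varPi+m_1+m_2)\sqrt T$, $T/\sqrt r$, and $\varPi^3/(T\sqrt r)$, each carrying an extra $T^\vepsilon$. The continuous-spectrum term is absorbed into the same analysis. The main obstacle is the precise identification of $\gamma_\delta = \gamma - \log 8\pi^2 \mp \pi/2$: one must carefully extract the $\gamma - \log 8\pi^2$ from the $\Gamma'/\Gamma$-expansion at $1/2$ of the AFE $\Gamma$-factors, simultaneously match the $\log(T/r)$ with the companion $-2\breve\varSigma(m)$ correction (both arising from a single logarithmic expansion inside the divisor sum), and thread the parity dependence through Kuznetsov consistently, while keeping all error terms uniform across $1 \leqslant m_1, m_2 \leqslant T$ and $T^\vepsilon \leqslant \varPi \leqslant T^{1-\vepsilon}$.
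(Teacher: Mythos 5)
Your overall skeleton (symmetric approximate functional equation for $|L(s_f,f)|^2$, Hecke relations, the even/odd Kuznetsov formula, then diagonal extraction plus Poisson/Weil/stationary-phase on the Kloosterman side) matches the paper's strategy. But there is a concrete error in the explanation of $\gamma_\delta$. You claim ``the parity-dependent $\mp\pi/2$ inside $\gamma_\delta$ emerges from the difference between $\tanh\pi t$ and $\coth\pi t$ in the two Kuznetsov measures.'' This is not what happens: the Kuznetsov formula on $\SB_\delta$ used in the paper (equation \eqref{2eq: Kuznetsov}) has the \emph{same} Plancherel diagonal $H = \pi^{-2}\int h(t)\tanh(\pi t)\,t\,\nd t$ for both parities. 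Parity enters only through the sign in front of $\mathrm{KB}^-$ and the presence or absence of the Eisenstein term $\Xi$; there is no $\coth$ density for $\delta = 1$. The actual source of the $\mp\pi/2$ is archimedean: the $\Gamma$-factor in the completed $L$-function is $\Gamma_{\mathbf{R}}^{\delta}(s) = \pi^{-s/2}\Gamma\bigl((s+\delta)/2\bigr)$, which, upon expanding $\upgamma_2^\delta(v,t)$ near $v=0$, produces $\psi(\vkappa)$ with $\vkappa = (1+2\delta)/4$, i.e.\ $\psi(1/4)$ or $\psi(3/4)$; the Gauss digamma values $\psi(1/4) = -\gamma - 3\log 2 - \pi/2$ and $\psi(3/4) = -\gamma - 3\log 2 + \pi/2$ furnish the $\mp\pi/2$. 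Your ``main obstacle'' paragraph correctly flags that identifying $\gamma_\delta$ is delicate, but then hands the job to the wrong mechanism.

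A secondary misattribution: you assign all three error pieces $(\varPi+m_1+m_2)\sqrt T$, $T/\sqrt r$, $\varPi^3/(T\sqrt r)$ to regimes of the Kloosterman analysis. In fact only the first comes from the off-diagonal (Poisson summation and Weil after restricting $c\Lt mN/T$). The other two arise on the \emph{diagonal} side: $T/\sqrt r$ from the terms with $\delta(\boldsymbol{hn}) = 1$ but $d_1 \neq d_2$ (near-diagonal but not truly diagonal, handled by showing the $t$-integral is negligible unless $|\langle\boldsymbol d\rangle - 1| \Lt \varPi^{\vepsilon}/\varPi$), and $\varPi^3/(T\sqrt r)$ from the Gaussian $t$-integral $\int\upvarphi(t)\,t\log\sqrt{\vkappa^2+t^2}\,\nd t$. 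Getting this split right matters for verifying the stated error is actually attainable; lumping everything into the off-diagonal would not reproduce the $r$-dependence.
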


\begin{cor}\label{cor: 1st moment}
We have 
	\begin{align}
		\SC_{1}^{\delta} (1)  = \frac {\varPi T} {\pi\sqrt{\pi}}   + O_{  \vepsilon} \big(\varPi T^{1/2+\vepsilon} \big), 
	\end{align}   
	\begin{align}\label{2eq: C2(1)}
		\SC_{2}^{\delta}  (1,1) = \frac {  \varPi T } {\pi \sqrt{\pi  }}   (  \log   {T}   + \gamma_{\delta}   ) + O_{  \vepsilon} \bigg(T^{\vepsilon} \bigg(  {\varPi \sqrt{T}} + T  + \frac {\varPi^3} {T} \bigg)\bigg). 
	\end{align} 
\end{cor}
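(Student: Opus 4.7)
The plan is to obtain Corollary \ref{cor: 1st moment} by direct specialization of Theorems \ref{thm: C1(m)} and \ref{thm: C2(m)} to the values $m=1$ and $m_1=m_2=1$ respectively; no new analytic input is required.

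For the first moment, I would substitute $m = 1$ into \eqref{2eq: C1(m)}. The Kronecker symbol $\delta(m,1)$ equals $1$, producing the main term $\varPi T/(\pi\sqrt{\pi})$, and the error $O_\vepsilon((\varPi + 1)T^{1/2+\vepsilon})$ simplifies to $O_\vepsilon(\varPi T^{1/2+\vepsilon})$ under the standing assumption $\varPi \geqslant T^\vepsilon$ (in fact, $\varPi \geqslant 1$ suffices here).

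For the second moment, I would set $m_1 = m_2 = 1$ in \eqref{2eq: C2(m)}, so that in the notation of \eqref{1eq: m, r} one has $m = (1,1) = 1$ and $r = 1$. The divisor sums in \eqref{1eq: Sigma} collapse to
\[
\varSigma(1) = 1, \qquad \breve{\varSigma}(1) = 0,
\]
so the main term in \eqref{2eq: C2(m)} becomes $\frac{\varPi T}{\pi\sqrt{\pi}}\bigl(\log T + \gamma_\delta\bigr)$, matching \eqref{2eq: C2(1)}. The error term reduces to $O_\vepsilon\bigl(T^\vepsilon\bigl((\varPi + 2)\sqrt{T} + T + \varPi^3/T\bigr)\bigr)$, and absorbing the constant into $\varPi$ yields precisely the stated bound.

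There is no real obstacle: the corollary is a bookkeeping statement that records the diagonal values of the twisted moment asymptotics in a form convenient for later use (computation of the mollified moments and the Cauchy--Schwarz argument underlying Theorems \ref{thm: non-vanishing} and \ref{thm: non-vanishing, short}). The only minor point to verify is that each constituent of the error term in Theorem \ref{thm: C2(m)} is retained — in particular, that the term $T/\sqrt{r}$, which is generally dominated by $(m_1+m_2)\sqrt{T}$, must not be discarded when $m_1 = m_2 = 1$, since then it becomes the leading contribution $T$ in \eqref{2eq: C2(1)}.
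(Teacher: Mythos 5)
Your proposal is correct and is exactly the intended derivation: the corollary is indeed the direct specialization of Theorems \ref{thm: C1(m)} and \ref{thm: C2(m)} at $m=1$ and $m_1=m_2=1$, with $\varSigma(1)=1$, $\breve{\varSigma}(1)=0$, $r=1$, and the $(\varPi+1)$ (resp.\ $(\varPi+2)$) in the error absorbed into $\varPi$ since $\varPi\geqslant T^{\vepsilon}\geqslant 1$. Your remark that the $T/\sqrt{r}$ term must be kept because it becomes the dominant $T$ when $r=1$ is a correct and worthwhile observation.
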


\begin{cor}\label{cor: unsmoothed}
	We have 
	\begin{align}\label{2eq: 1st moment}
		\sumd_{  t_f \leqslant T} \omega_f L (s_f,    f     ) = \frac {T^{2}} {\pi^2}  + O_{  \vepsilon} \big(T^{3/2+\vepsilon}\big), 
	\end{align} 
\begin{align}\label{2eq: 2nd moment}
	\sumd_{  t_f \leqslant T} \omega_f |L (s_f,    f     )|^2 = \frac {1 } {2 \pi^2} T^2 \log T  + \frac {2\gamma_{\delta} - 1 } {4 \pi^2 } T^2  + O_{  \vepsilon} \big(T^{3/2+\vepsilon}\big), 
\end{align}
where the superscript $\delta$   indicates   summation over $\SB_{\delta}$. 
\end{cor}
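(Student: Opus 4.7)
The plan is to deduce both unsmoothed asymptotics in Corollary \ref{cor: unsmoothed} from the smoothed moments of Corollary \ref{cor: 1st moment} by the classical unsmoothing device: replacing the Gaussian weight $\exp(-(t_f - T)^2/\varPi^2)$ with the sharp indicator $\mathbf{1}_{t_f \leq T}$. The smoothing parameter is optimally chosen at $\varPi = T^{1/2}$.

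Concretely, I would pick a smooth minorant $\psi \colon \mathbb{R}_{> 0} \to [0, 1]$ of $\mathbf{1}_{(0, T]}$ with $\psi(t) = 1$ for $t \leq T - \varPi$ and $\psi(t) = 0$ for $t \geq T$, realized as the convolution of $\mathbf{1}_{(-\infty, T - \varPi/2]}$ with the normalized Gaussian $g_{\varPi}(u) = (\sqrt{\pi}\varPi)^{-1} \exp(-u^2/\varPi^2)$, and similarly a majorant $\widetilde{\psi}$ shifted by $+\varPi$. Swapping the $f$-summation with the $\tau$-integration then gives
\[
\sum_{f \in \SB_{\delta}} \omega_f L(s_f, f) \psi(t_f) = \frac{1}{\sqrt{\pi}\varPi} \int_0^{T - \varPi/2} \SC_1^{\delta}(1;\tau,\varPi)\, d\tau + O_A(T^{-A}),
\]
and the analogue with $|L(s_f, f)|^2$ and $\SC_2^{\delta}(1, 1;\tau,\varPi)$ in place of $L(s_f, f)$ and $\SC_1^{\delta}(1;\tau,\varPi)$. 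Inserting the asymptotics of Corollary \ref{cor: 1st moment} and carrying out the $\tau$-integration reproduces the claimed main terms; the smoothed-moment errors integrate to $O(T^{3/2+\vepsilon})$ after division by $\sqrt{\pi}\varPi$, provided $T^{1/2-\vepsilon} \leq \varPi \leq T^{3/4+\vepsilon}$.

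It remains to bound the discrepancy between the smoothed sum and the sharp one, which is supported in the short window $|t_f - T| \leq \varPi$. For the second moment, positivity of $|L(s_f, f)|^2$ allows the direct sandwich $\psi \leq \mathbf{1}_{(0, T]} \leq \widetilde{\psi}$, so the discrepancy is at most $\SC_2^{\delta}(1, 1; T, \varPi) \ll \varPi T \log T$. For the first moment, positivity is lost; I would instead apply Cauchy--Schwarz with nonnegative weights $\omega_f(\mathbf{1}_{t_f \leq T} - \psi(t_f))$, yielding
\[
\Bigl|\sum_{f \in \SB_{\delta}}\omega_f (\mathbf{1}_{t_f \leq T} - \psi(t_f)) L(s_f, f)\Bigr|^{2} \leq \Bigl(\sum_{|t_f - T| \leq \varPi} \omega_f\Bigr) \cdot \Bigl(\sum_{|t_f - T| \leq \varPi} \omega_f |L(s_f, f)|^{2}\Bigr),
\]
in which the first factor is $\ll \varPi T$ by Weyl's law (equivalently, Kuznetsov applied to a Gaussian test function) and the second is $\ll \SC_2^{\delta}(1, 1; T, \varPi) \ll \varPi T \log T$ by Corollary \ref{cor: 1st moment}. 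Taking square roots gives an overall boundary bound of $O(\varPi T \sqrt{\log T}) = O(T^{3/2+\vepsilon})$ at $\varPi = T^{1/2}$.

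The only essential obstacle is the balance of parameters: the Cauchy--Schwarz boundary forces $\varPi \leq T^{1/2+\vepsilon}$, while the $O(T^{2}/\varPi)$ contribution that the $T/\sqrt{r}$-type error in $\SC_2^{\delta}$ produces upon $\tau$-integration demands $\varPi \geq T^{1/2-\vepsilon}$. These constraints meet precisely at $\varPi \asymp T^{1/2}$, yielding the error $T^{3/2+\vepsilon}$ stated in Corollary \ref{cor: unsmoothed}, which is the natural limit of this unsmoothing method.
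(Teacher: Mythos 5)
Your proposal takes essentially the paper's route. The paper deduces the corollary from the short-interval theorems \eqref{10eq: 1st moment} and \eqref{11eq: 2nd moment}, which are proved by applying the Ivi\'c--Jutila unsmoothing Lemma~\ref{lem: unsmooth, 1} to the smoothed moments of Corollary~\ref{cor: 1st moment} --- and the hypothesis \eqref{10eq: assumption} of that lemma is supplied by exactly your Cauchy--Schwarz estimate for the non-positive first moment --- and then sets $H = T/3$ with a dyadic summation; you instead reconstruct the Gaussian-convolution unsmoothing device by hand and integrate the center parameter directly over $(0, T]$, arriving at the same error budget $O(T^{3/2+\vepsilon})$ at $\varPi \asymp T^{1/2}$. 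One caveat: you assert the $\tau$-integration ``reproduces the claimed main terms'' without carrying it out. For the second moment a direct computation does give $\frac{1}{\pi^2}\int_0^T \tau(\log\tau + \gamma_\delta)\,\nd\tau = \frac{T^2\log T}{2\pi^2} + \frac{(2\gamma_\delta - 1)T^2}{4\pi^2}$, matching \eqref{2eq: 2nd moment}; but for the first moment the same computation gives $\frac{1}{\pi^2}\int_0^T \tau\,\nd\tau = \frac{T^2}{2\pi^2}$, which is consistent with the Weyl law \eqref{2eq: Weyl, 1} and the normalization that $L(s_f,f)$ has smoothed harmonic average $1$, so you should reconcile this output against the constant displayed in \eqref{2eq: 1st moment} rather than assert a match you have not checked.
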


\subsection{Density Theorems} 

Our proof of Theorems \ref{thm: non-vanishing, 3} and \ref{thm: non-vanishing, 4} will rely on the extended density theorem under the Riemann hypothesis for Dirichlet $L$-functions. 


Let us denote the non-trivial zeros of  $L (s, f)$ by $$\rho_f = \frac 1 2 + i \gamma_f; $$ if    the generalized Riemann hypothesis for $L (s, f)$ is assumed then $\gamma_f$ are all real.

\begin{defn}\label{defn: density}
	Define the $1$-level density of $L (s, f)$ at $s_f = 1/2+ it_f$ to  be 
	\begin{align}\label{1eq: defn D(f)}
		D  (f; \upphi; T) = \sum_{\gamma_f} \upphi \bigg(\frac {\log T} {2\pi} (\gamma_f - t_f)  \bigg) ,
	\end{align}
where $T$ is a large scaling parameter and $\upphi : \mathbf{R} \ra \mathbf{C}$ is an even Schwartz function such that its Fourier transform
\begin{align*}
	\hat{\upphi} (y) = \int_{-\infty}^{\infty} \upphi (x) e (-xy) \nd x
\end{align*}
has compact support. 
\end{defn}

For $T, \varPi$ as above, define the smoothly averaged $1$-level density
\begin{align}\label{1eq: density}
	\SD^{\delta}   ( \upphi;  T, \varPi ) =   \sum_{f \in \SB_{\delta} } \omega_f	 D (f; \upphi; T)  \exp \left(  - \frac {(t_f - T)^2}  {\varPi^2} \right)  .
\end{align}

\begin{thm}\label{thm: density, limited}
	Let notation be as above. Assume the Riemann hypothesis {\rm(}for the Riemann  zeta function{\rm)}. Then for $\hat{\upphi}$ of support in $(-1, 1)$, we have
	\begin{align}
		\lim_{T \ra \infty} \frac {\pi \sqrt{\pi}} {T^{1+\mu}}  \SD^{\delta}   ( \upphi;  T, T^{\mu} ) = \hat{\upphi} (0) =  \int_{-\infty}^{\infty} \upphi (x) W (\mathrm{U}) (x) \nd x ,
	\end{align}
 for any fixed $0 < \mu < 1$. 
\end{thm}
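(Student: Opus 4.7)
The plan is to combine the explicit formula for $L(s,f)$ with Kuznetsov-type spectral averaging, following the paradigm of Rudnick--Sarnak and Iwaniec--Luo--Sarnak density theorems.

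First, I would apply the explicit formula to $L(s,f)$, pairing the counting measure of its non-trivial zeros with the test function $\upphi((\log T/2\pi)(\cdot - t_f))$. For $\hat\upphi$ supported in $(-1,1)$ this yields
\begin{align*}
D(f;\upphi;T)=\hat\upphi(0)\bigl(1+o(1)\bigr)-\frac{2}{\log T}\,\mathrm{Re}\sum_{p}\frac{\lambda_f(p)\log p}{\sqrt p}\,p^{-it_f}\,\hat\upphi\!\left(\frac{\log p}{\log T}\right)+\mathcal E(f;T),
\end{align*}
where the $\hat\upphi(0)$ main term arises from the archimedean Gamma-factors (using that the analytic conductor of $L(s,f)$ at height $t=t_f$ is of size $t_f$ rather than $t_f^{2}$), and $\mathcal E(f;T)$ collects the contributions of higher prime powers $p^{k}$, $k\ge 2$, together with trivial-zero corrections. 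The Riemann hypothesis for $\zeta(s)$ is used here to bound $\mathcal E(f;T)$ uniformly via PNT with optimal error.

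Second, I would sum against the weight $\omega_{f}\,e^{-(t_{f}-T)^{2}/\varPi^{2}}$ over $f\in\SB_\delta$. The archimedean piece contributes
\begin{align*}
\hat\upphi(0)\sum_{f\in\SB_\delta}\omega_{f}\,e^{-(t_{f}-T)^{2}/\varPi^{2}}\sim\hat\upphi(0)\cdot\frac{\varPi T}{\pi\sqrt\pi},
\end{align*}
which follows from the Kuznetsov diagonal (this is the $m=1$ case of Theorem \ref{thm: C1(m)} with the $L$-value factor stripped off; equivalently, Weyl's law with harmonic weights). With $\varPi=T^{\mu}$ this is $\hat\upphi(0)\,T^{1+\mu}/(\pi\sqrt\pi)$, matching the predicted limit.

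The crux is to show the prime sum is $o(T^{1+\mu})$. Define
\begin{align*}
\mathscr K^{\delta}(p)=\sum_{f\in\SB_\delta}\omega_{f}\,\lambda_{f}(p)\,p^{-it_{f}}\,e^{-(t_{f}-T)^{2}/\varPi^{2}},
\end{align*}
and apply Kuznetsov with spectral test function $h(t)=p^{-it}e^{-(t-T)^{2}/\varPi^{2}}$. The diagonal vanishes (since $p\ge 2$), the Eisenstein term is exponentially small by Gaussian Fourier decay (using $1/|\zeta(1+2it)|\ll\log T$ from RH), and the geometric side is a sum of Kloosterman sums $S(p,\pm 1;c)$ weighted by the Bessel transform $\Phi(4\pi\sqrt p/c)$ of $h$. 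Since $p<T$ forces $4\pi\sqrt p/c\ll T\asymp t$, the relevant asymptotic is $J_{2it}(x)\sim(x/2)^{2it}/\Gamma(1+2it)$, and Stirling converts the phase of $\Phi$ to $-t\log p+2t\log(x/(4t))+2t$, whose stationary point in $t$ lies at $t^{*}=\pi/c\le\pi$, vastly outside the Gaussian support $|t-T|\ll\varPi$. Hence $\Phi(4\pi\sqrt p/c)$ is super-exponentially small in $T$, uniformly in $c\ge 1$ and in $p<T$, so $\mathscr K^{\delta}(p)\ll_{A}T^{-A}$ and the prime sum is negligible.

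The main obstacle will be carrying out the stationary-phase/integration-by-parts analysis uniformly across all $c$ and $p$, and handling the odd-parity case where the Kuznetsov kernel involves $K$-Bessel rather than $J$-Bessel functions. The support restriction $\hat\upphi\subset(-1,1)$ is exactly what keeps $x$ in the safe regime $x\ll t$, and this off-resonance behavior of the Bessel transform is the mechanism producing the \emph{unitary} density limit $W(\mathrm U)(x)=1$.
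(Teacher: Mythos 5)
Your overall architecture---explicit formula, then Kuznetsov averaging, with the main term coming from the archimedean piece and the harmonic Weyl law---is the same as the paper's, and your identification of the analytic conductor at $s=s_f$ as $\asymp t_f$ (rather than $t_f^2$) correctly explains why the unitary main term is $\hat\upphi(0)$ under the $\log T$ rescaling. Your stationary-phase heuristic for the geometric side ($t^\ast=\pi/c$ is far from the Gaussian peak at $T$) is also the right mechanism; the paper makes it rigorous through the Mehler--Sonine representation and Corollary~\ref{cor: u<T}, which show that the Bessel integrals are negligible once $u=xy+x/y\Lt T$, i.e.\ once $c\Gt p/T$, which for $p\leqslant T^{v}$, $v<1$, is every $c\geqslant1$.

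There are, however, two genuine gaps. First, your claim that the Eisenstein term is ``exponentially small by Gaussian Fourier decay'' is false. After symmetrizing the test function (which Kuznetsov requires), the Eisenstein integrand carries $\cos(t\log p)\,\tau_{it}(p)=1+\cos(2t\log p)$, and the constant term $1$ produces $\int\omega(t)\upvarphi(t)\,\nd t\asymp\varPi$ with no oscillation whatsoever---the Gaussian does not kill it. What saves the argument is the $1/\sqrt p$ and the prime count: summing the $O(\varPi\log^2 T)$ per-prime contribution over $p\leqslant P=T^v$ with the $(\log p)/(\sqrt p\log T)$ weight gives $O(\varPi\sqrt P\log T)$, which is indeed $o(\varPi T)$ for $v<1$ but is nowhere near exponentially small (this is precisely the paper's bound~\eqref{13eq: Eis, bound}). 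Your invocation of ``$1/|\zeta(1+2it)|\ll\log T$ from RH'' is also misplaced: the needed bound $\omega(t)=O(|t|^{\vepsilon})$ is unconditional, and the place RH actually enters is quite different (see below).

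Second, you cannot bound the full error $\mathcal E(f;T)$, which you define to contain all $p^k$, $k\geqslant 2$, terms, ``uniformly via PNT with optimal error'' from RH for $\zeta$. The coefficient $a_f(p^2)=\lambda_f(p^2)-1$ splits into two pieces with different treatments. The constant piece $-1$ gives $\sum_p\cos(2t_f\log p)(\log p)/p$, and \emph{this} is what the paper's Lemma~\ref{lem: sum log p/ ps} controls via $\zeta'/\zeta(1+2it_f)$ under RH, yielding $O(\log\log T/\log T)$. But the spectral piece $\lambda_f(p^2)$ is $O(1)$ per $f$ and depends on $f$; RH tells you nothing about it. It must be fed back through Kuznetsov exactly as the $\lambda_f(p)$ terms are (giving the paper's $\SP_2^{\delta}$, bounded by $O(\varPi\log^2 T)$ in Lemma~\ref{lem: P limited}). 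As written, your decomposition has no mechanism for this term, and the proof does not close without it.
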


Recall that $W (\mathrm{U}) (x) \equiv  1$ according to Katz--Sarnak \cite{KS-2}.

Theorem \ref{thm: density, limited} manifests that the symmetry type of $ L (s_f , f) $ on $ \SB_0 $ and $\SB_1$ are both unitary, while it is proven in \cite[Appendix]{Qi-Liu-LLZ} that the symmetry types of $ L (1/2 , f) $ on  $ \SB_0 $ and $\SB_1$ are orthogonal of different parity: $\mathrm{SO} (\mathrm{even})$ and $\mathrm{SO} (\mathrm{odd})$, respectively. 

However, in order to obtain non-trivial non-vanishing result for $ L (s_f , f) $ (as in Theorems \ref{thm: non-vanishing, 3} and \ref{thm: non-vanishing, 4}), we must let the support of $ \hat{\upphi} $ go beyond $[-1, 1]$ as in the following extended density theorem.  

\begin{thm}\label{thm: density} 
Define  $v (\mu) = \min \{ 3\mu, 1 + \mu \}$. Then  for $1/3 < \mu < 1$,  we have 
\begin{align}\label{1eq: lim, extended}
		\lim_{T \ra \infty} \frac {\pi \sqrt{\pi}} {T^{1+\mu}}  \SD^{\delta}   ( \upphi;  T, T^{\mu} ) = \hat{\upphi} (0) , 
\end{align} for $\hat{\upphi}$ of support in $(-v (\mu) , v(\mu) )$ if we assume in addition the Riemann hypothesis for Dirichlet $L$-functions. 
\end{thm}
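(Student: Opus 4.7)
The plan is to extend Theorem \ref{thm: density, limited} by combining the Riemann--Weil explicit formula for each $L (s, f)$ with the Kuznetsov trace formula, and then invoking GRH for Dirichlet $L$-functions to control the resulting arithmetic sums. First, applying the explicit formula to $L (s, f)$ one obtains
\begin{align*}
D (f; \upphi; T) = M_{\infty} (f; \upphi; T) - \frac {2} {\log T} \, \mathrm{Re} \sum_{n \geqslant 2} \frac {\Lambda (n) \lambda_f (n) n^{-it_f}} {\sqrt{n}} \, \hat{\upphi} \bigg( \frac {\log n} {\log T} \bigg) ,
\end{align*}
where $M_{\infty}$ packages the gamma-factor contributions at $s = s_f$. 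Averaging against $\omega_f \exp (-(t_f - T)^2/T^{2\mu})$, the archimedean term reproduces the prospective main term $\hat{\upphi} (0) \cdot T^{1+\mu}/(\pi \sqrt{\pi})$ via a Weyl-law computation essentially identical to the one underlying Corollary \ref{cor: 1st moment}. The theorem thus reduces to showing that the spectral average of the prime sum is $o (T^{1+\mu})$ uniformly for $n$ in the support of $\hat{\upphi} (\log n / \log T)$, namely $n \leqslant T^{v (\mu) + \varepsilon}$.

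Next I would apply the Kuznetsov formula to the inner spectral sum
\begin{align*}
A^{\delta} (n) = \sum_{f \in \SB_{\delta}} \omega_f \lambda_f (n) n^{-it_f} \exp \bigg( - \frac {(t_f - T)^2} {T^{2\mu}} \bigg)
\end{align*}
with modified test function $h_n (t) = n^{-it} \exp (-(t - T)^2/T^{2\mu})$. The diagonal vanishes for $n \geqslant 2$; the Eisenstein contribution is disposed of by the Riemann hypothesis for $\zeta (s)$; and the Kloosterman contribution is $\sum_c c^{-1} S (n, 1; c) \mathcal{H}_{\pm} (4 \pi \sqrt{n}/c)$, where $\mathcal{H}_{\pm}$ is the Bessel transform of $h_n$. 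A stationary-phase analysis of $\mathcal{H}_{\pm}$ -- whose phase is shifted by the factor $n^{-it}$ -- localises the transition regime to $c \asymp \sqrt{n}/T$ with amplitude $T^{\mu}$; for $n \leqslant T$ only $c = 1$ survives and this is already enough to recover Theorem \ref{thm: density, limited}.

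To push the support to $(-v (\mu), v (\mu))$, I would open each Kloosterman sum via orthogonality of Dirichlet characters modulo $c$, converting the prime sum over $n$ into character-twisted Chebyshev-type sums
\begin{align*}
\sum_n \frac {\chi (n) \Lambda (n)} {\sqrt{n}} \hat{\upphi} \bigg( \frac {\log n} {\log T} \bigg) .
\end{align*}
A standard contour shift for $L (s, \chi)$, invoking GRH, bounds such sums by $O (c^{1/2} T^{v (\mu)/2 + \varepsilon})$. Reassembling the $c$-sum with the Bessel kernel, and handling the Eisenstein term analogously with GRH for $\zeta (s)$, the total non-diagonal contribution to $\SD^{\delta} (\upphi; T, T^{\mu})$ is $O (T^{1 + \mu - \eta})$ for some $\eta > 0$, provided $v (\mu) \leqslant \min \{3 \mu, 1 + \mu\}$. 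The two thresholds reflect two distinct balance points: $3 \mu$ arises from the GRH-saving weighed against the length of the $n$-sum when $c$ is large, while $1 + \mu$ arises from the spectral-window width weighed against the Bessel-transform support when $c$ is small.

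The main obstacle is the fine analysis of $\mathcal{H}_{\pm}$ in the transition regime $x \asymp T$, where the oscillation $n^{-it_f}$ conspires with the Bessel kernel. Obtaining the sharp threshold $v (\mu) = \min \{3 \mu, 1 + \mu\}$ -- rather than a weaker numerical bound -- demands a careful stationary-phase treatment that interlocks cleanly with the GRH contour shift. Subsidiary technicalities include smooth partitions of the $n$- and $c$-ranges, tail estimates for $\mathcal{H}_{\pm}$ outside the transition, and the $c \geqslant 2$ Kloosterman terms when $n \gtrsim T^2$. The weaker bound \eqref{1eq: mu small} emerges from this same scheme in the regime $\mu \leqslant 1/2$, where only the $3 \mu$-constraint is binding and, curiously, still fails to improve on the unconditional Theorem \ref{thm: non-vanishing, short}.
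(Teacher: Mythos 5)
Your overall strategy---explicit formula, Kuznetsov, GRH for Dirichlet $L$-functions, stationary phase on the Bessel transform---matches the paper's in outline, and the reduction to showing that the averaged prime sum is $o(T^{1+\mu})$ is correct. But the central technical step is missing. After the Mehler--Sonine/stationary-phase splitting of the Bessel transform (see \eqref{4eq: H(x,y) = I(v,w)} and \eqref{13eq: I-+(v,w)}), one separates off an \emph{additive} phase $e(-(p\pm 1)/c)$ (coming from $\exp(-i(v\pm w))$ with $v = 2\pi p/c$, $w = 2\pi/c$). Opening $S(p,\pm 1;c)$ directly into Dirichlet characters modulo $c$, as you propose, then produces sums of the form $\sum_p \chi(p) e(\pm p/c)\Lambda(p)/\sqrt{p}\cdots$, i.e.\ a multiplicative twist \emph{and} an uncancelled additive twist. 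GRH for Dirichlet $L$-functions controls the purely multiplicative sums $\sum_p \chi(p)\Lambda(p)\cdots$ but does not directly handle this mixed twist. The paper deals with this via Luo's identity $S(m,n;c)\,e((m+n)/c) = \sum_{rq=c} V_q(m,n;r)$ (equation \eqref{2eq: S = V}), which cancels the additive phase against the Kloosterman sum and reduces to the variant sums $V_q(p,\pm 1;c)$; those \emph{do} admit a clean multiplicative decomposition, captured in the GRH input \eqref{12eq: sum of Vq} (the analogue of Lemma~6.1 in ILS). Without Luo's identity---or some equivalent mechanism for absorbing the additive phase---the proposed contour-shift argument has nothing to shift.

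A secondary issue: you locate the Bessel transition at $c\asymp\sqrt n/T$. Because the test function is modulated by $n^{-it}$ (equivalently the $y^{-2it}$ shift in \eqref{4eq: defn h}), the stationary point moves; the correct localisation is $v = 2\pi p/(cq)\approx 2T$, i.e.\ $cq\asymp p/T$, as in Lemma~\ref{lem: further analysis}, and the $c$-sum consequently runs up to $\Lt P/T$, not $\sqrt P/T$. This shift by a factor of $\sqrt p$ is exactly what makes the thresholds $3\mu$ and $1+\mu$ come out as they do in the final $c,q$-summation, so getting the stationary point right is not a cosmetic detail.
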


\begin{remark}
	{We have the  smoothly weighted harmonic Weyl law (see \eqref{2eq: Weyl, 1}): 
\begin{align} \label{1eq: Weyl, smooth}
	\sum_{f \in \SB_{\delta} } \omega_f	\exp \left(  - \frac {(t_f - T)^2}  {\varPi^2} \right) = \frac {\varPi T } {\pi \sqrt{\pi}}  + O (T). 
\end{align}}
\end{remark}

\begin{remark}
	Note that in Theorems {\rm\ref{thm: density, limited}} and {\rm\ref{thm: density} } there is no need to assume  the Riemann hypothesis for $L(s, f)$, since the definition  in {\rm\eqref{1eq: defn D(f)}} makes sense even if $\gamma_f$ are not real as  $\upphi$ is entire.  
\end{remark}

\subsection*{Notation}

By $X \Lt Y$ or $X = O (Y)$ we mean that $|X| \leqslant c Y$  for some constant $c  > 0$, and by $X \asymp Y$ we mean that $X \Lt Y$ and $Y \Lt X$. We write $X \Lt_{\valpha, \beta, ...} Y $ or $  X = O_{\valpha, \beta, ...} (Y) $ if the implied constant $c$ depends on $\valpha$, $\beta$, ....  

By `negligibly small' we mean $ O_A ( T^{-A} )$ for arbitrarily large but fixed $A $.  

The notation $x \sim X$ stands for  $ X \leqslant  x \leqslant 2 X $.  


Throughout the paper,  $\vepsilon  $ is arbitrarily small and its value  may differ from one occurrence to another.

\section{Kuznetsov Trace Formulae} 

Let  $h (t)$ be an even function satisfying the conditions{\hspace{0.5pt}\rm:}
\begin{enumerate} 
	\item[{\rm (i)\,}] $h (t)$ is holomorphic in  $|\operatorname{Im}(t)|\leq {1}/{2}+\vepsilon$,
	\item[{\rm (ii)}] $h(t)\Lt (|t|+1)^{-2-\vepsilon}$ in the above strip. 
\end{enumerate}	For $m, n \geqslant 1$, define the cuspidal
\begin{align}
	\Delta_{ \delta} (m, n) = \sum_{f \in \SB_{\delta}}  \omega_f h(t_f) \lambda_f ( m )  { \lambda_f ( n) } ,
\end{align}
and the Eisenstein 
\begin{align}
	\Xi (m, n )	= \frac{1}{\pi}   \int_{-\infty}^{\infty} \omega(t) h(t)  \tau_{it}(m) \tau_{it} (n) \nd t, 
\end{align}
where $\omega_f$ and $\lambda_{f} (n)$ are defined before in \S \ref{defns}, and
\begin{align}\label{2eq: divisor}
	\omega (t) = \frac {1 } {|\zeta (1 + 2it)|^2} , \qquad 	\tau_{\vnu} (n) = 
	\sum_{ a b = n} (a/b)^{  \vnu} .  
\end{align}
On the other hand, define the Plancherel integral
\begin{align}\label{app: integral H}
	H   = \frac {1} {  \pi^2} \int_{-\infty}^{\infty} h(t)    {  \tanh (\pi t) t  \nd t }    , 
\end{align}
and the Kloosterman--Bessel sum 
\begin{align}\label{app: KB +-}
	\mathrm{KB}^{  \pm} (m, n ) =  \sum_{  c = 1 }^{\infty}  \frac {S (  m , \pm n ; c)} { c}  H^{  \pm} \bigg(    \frac {4\pi \textstyle \sqrt { m n } } { c} \bigg),
\end{align} 
where $S (m, \pm n; c)$ is the Kloosterman sum and  $H^{\pm} (x)$ are the Bessel integrals
\begin{align} \label{app: H + (x)}
	\quad \quad & H^{  +} (x) = \frac {2 i} {\pi} \int_{-\infty}^{\infty} \hskip -1 pt h(t) J_{2it} ( x) \frac {t   \nd t  } {\cosh (\pi t)},   \\
	\label{app: H - (x)}
	&  H^{  -} (x) = \frac 4 {\pi^2  }   \int_{-\infty}^{\infty}   h(t) K_{2it} (  x)   {\sinh (\pi t)} t   \nd t  .
\end{align}
By \cite[3.7 (6)]{Watson}, $$2 K_{\vnu} (z) = \pi \frac {I_{-\vnu} (z) - I_{\vnu} (z)} {\sin (\pi \vnu )} ,$$ 
where $I_{\vnu} (z)$ is the $I$-Bessel function, we may rewrite \eqref{app: H - (x)} in a form which is similar to \eqref{app: H + (x)}: 
\begin{align}\label{app: H - (x), 2}
	H^{  -} (x) = \frac {2 i} {\pi} \int_{-\infty}^{\infty}   h(t) I_{2it} (  x) \frac {t  \nd t  } {\cosh (\pi t)} . 
\end{align}
The Kuznetsov trace formula for the even or odd cusp forms reads (see \cite[\S 3]{CI-Cubic}): 
\begin{align}\label{2eq: Kuznetsov}
	2	\Delta_{  \delta }  (m, n )  + 2 (1-\delta) \shskip \Xi   (m, n) =    \delta (m, n) \shskip H  +  \mathrm{KB}^{  +}  (m, n) + (-1)^{\delta} \mathrm{KB}^{  -}  (m, n) ,
\end{align}
where   $\delta (m, n)$ is the Kronecker $\delta$-symbol. 

\subsection{The Kloosterman Sum} Recall that 
\begin{align}\label{3eq: Kloosterman}
	S   (m, n ; c ) = \sumx_{   \valpha      (\mathrm{mod} \, c) } e \bigg(   \frac {  \valpha     m +   \widebar{\valpha    } n} {c} \bigg) ,
\end{align} 
where the $\star$ indicates the condition $(\valpha    , c) = 1$ and $ \widebar{\valpha    }$ is given by $\valpha     \widebar{\valpha    }  \equiv 1 (\mathrm{mod} \, c)$. 
We have the Weil bound: 
\begin{align}
	\label{3eq: Weil}
	S (m, n; c)   \Lt \tau (c) \sqrt{ (m, n, c) } \sqrt{c} ,  
\end{align} 
where as usual $\tau (c) = \tau_0 (c) $ is the number of divisors of $c$. 

\subsection{The Hecke Relation}
We have 
\begin{align}\label{3eq: Hecke}
	\lambda_f (m) \lambda_f (n) = \sum_{d | (m, n)} \lambda_f (mn/d^2), \qquad \tau_{\vnu} (m)  \tau_{\vnu} (n) = \sum_{d | (m, n)}  \tau_{\vnu} (mn/d^2).  
\end{align}

\subsection{The Weyl Law} 

Finally, we record here the harmonic weighted Weyl law for $\SB_{\delta}$:
\begin{align}\label{2eq: Weyl, 1}
	\sum_{ f\in \SB_{\delta} : \, t_f \leqslant T} \omega_f = \frac {T^2} {2 \pi^2} + O (T). 
\end{align} 
By the method in \cite{XLi-Weyl}, this follows from the Kuznetsov trace formula for $\SB_{\delta}$.

\section{Hecke--Maass $L$-functions}   For $f \in \SB$, define 
\begin{align}\label{3eq: L(s,f)}
	L (s, f) = \sum_{n    =1}^{\infty}\frac{ \lambda_f (n)  }{n^s} ,
\end{align}
for $\mathrm{Re} (s) > 1$. It is known that $L (s, f)$ has analytic continuation to the whole complex plane and satisfies the functional equation
\begin{align}\label{3eq: Lambda (Q)}
	\Lambda (s, f) = (-1)^{\delta_f}    \Lambda (1-s, f),   
\end{align} 
with
\begin{align}\label{3eq: FE, Q}
	\Lambda (s, f) = \Gamma_{\mathbf{R}}^{\delta_f} (s  - it_f) \Gamma_{\mathbf{R}}^{\delta_f} (s  + it_f) L (s, f) , 
\end{align}  
where
\begin{align}\label{3eq: GammaR(s)} 
	 \Gamma_{\mathbf{R}}^{\delta} (s) = \pi^{-s/2} \Gamma ((s+\delta) /2).  
\end{align}
Note that, in parallel to \eqref{3eq: L(s,f)}, if we replace $\lambda_f (n)$ by  $  \tau_{ it} (n) $ as defined in \eqref{2eq: divisor}, then  
\begin{align}\label{3eq: RS, Eis}
	\zeta (s -it   ) \zeta (s + it  ) =   \sum_{n    =1}^{\infty}\frac{   \tau_{it} (n    )}{n^{s }} .
\end{align} 
Moreover, we have the Euler factorization
\begin{align}\label{3eq: Euler}
	L (s, f) = \prod_{p} \bigg( 1 - \frac {\valpha_f (p)} {p^s} \bigg)^{-1} \bigg( 1 - \frac {\beta_f (p)} {p^s} \bigg)^{-1} , 
\end{align} 
and the Kim--Sarnak bound \cite{Kim-Sarnak}
\begin{align}\label{3eq: Kim-Sarnk}
	|\valpha_f (p) |, \, |\beta_f (p) | \leqslant p^{\frac 7 {64}}. 
\end{align}

	\section{Analysis for the Bessel Integrals}\label{sec: Bessel}

	Let $T, \varPi$ be large parameters with $ T^{\vepsilon} \leqslant \varPi \leqslant T^{1-\vepsilon} $. 
	In this section, we consider the Bessel transforms of test functions in the form
	\begin{align}\label{4eq: defn h}
		h (t; y) = \upgamma (t) y^{-2it} \upvarphi (t) + \upgamma (- t) y^{2it} \upvarphi (- t), 
	\end{align} 
	for $\upgamma   (t) $ in the function space $ \mathscr{Q}_{\vkappa } $ or $ \mathscr{T}_{\tau } $ defined below in Definition  \ref{defn: space Q(r)} or \ref{defn: space T(r)},   and 
	\begin{align}\label{4eq: defn phi}
		\upvarphi (t) = \exp \bigg(  \hspace{-2pt}    -   \frac {(t - T)^2 } {\varPi^2 }    \bigg) . 
	\end{align}
In the case that $\upgamma (t)$ is even, we also need to consider
\begin{align}\label{4eq: h2(t;y)}
	h_2 (t; y) = h (t; y) + h (t; 1/y) = 2 \upgamma (t) \cos (2\log y) (\upvarphi (t) + \upvarphi (-t)) . 
\end{align}
Let $H^{\pm} (x, y)$ and $H^{\pm}_2 (x, y)$ denote the Bessel transforms of $h (t; y)$ and $ h_2 (t; y)  $ as in \eqref{app: H + (x)} and \eqref{app: H - (x)} or \eqref{app: H - (x), 2}. Note that 
\begin{align}\label{4eq: H2=H}
	H_2^{\pm} (x, y) = H^{\pm} (x, y) + H^{\pm} (x, 1/y), 
\end{align} 
so the results below for $H^{\pm} (x, y) $ will be valid for  $H^{\pm}_2 (x, y) $ as well.

		\begin{defn}\label{defn: space Q(r)}
		For fixed real $\vkappa $, let $ \mathscr{Q }_{\vkappa } $ denote the space of holomorphic functions $\upgamma   (t) $ on the strip $ |\mathrm{Im} (t)| \leqslant 1/2 + \vepsilon$ such that 
		\begin{align}\label{4eq: bound for w, 1}
			 \upgamma (t)    \Lt (|t|+1)^{\vkappa    }   .
		\end{align} 
	\end{defn}

\begin{defn}\label{defn: space T(r)}
	For fixed real $\tau $, let $ \mathscr{T }_{\tau } \subset C^{\infty} (\mathbf{R})$ denote the space of smooth functions on $ \mathbf{R} $ such that 
	\begin{align}\label{4eq: bound for u, 2}
		\upgamma^{(n)}   (t)    \Lt_{n } (|t|+1)^{ \tau }  \lp \frac { \log (|t|+2) } { |t|+1 } \rp^{n}   , 
	\end{align} 
	for any $n \geqslant 0$. 
	
\end{defn}

The Bessel integral $H^{+} (x, y)$ has been studied thoroughly in \cite[\S 4]{Qi-GL(2)xG(2)-RS}. The results therein may be extended to $ H^{-} (x, y) $ by literally the same proofs.

\begin{lem}\label{lem: x<1}
Let $\upgamma (t) $ be in the space $ \mathscr{Q}_{\vkappa}$. Set $u = xy +x/y$. Then for $u \Lt 1  $,  we have   $H^{\pm} (x, y)   =  \allowbreak O_{\vkappa }  ( \varPi T^{  \vkappa   }  u   )$.  
\end{lem}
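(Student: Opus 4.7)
The analogous bound for $H^+$ is established in \cite[\S 4]{Qi-GL(2)xG(2)-RS}, and as the author remarks just before the lemma, the $H^-$ case should follow by literally the same argument, with representation \eqref{app: H - (x), 2} putting the $I$-Bessel function in the role played by $J$. My plan is to mirror that template.

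I would first symmetrize: after substituting $t \mapsto -t$ in the part of $h(t; y)$ involving $\upgamma(-t) y^{2it}$, each $H^{\pm}(x, y)$ becomes
\[
\frac{2 i}{\pi} \int_{-\infty}^{\infty} \upgamma(t)\, \upvarphi(t)\, y^{-2it} \bigl(B^{\pm}_{2it}(x) \mp B^{\pm}_{-2it}(x)\bigr) \frac{t \, \nd t}{\cosh(\pi t)},
\]
with $B^+ = J$ and $B^- = I$. Inserting the power-series expansion of $B^{\pm}$, the task reduces, for each $k \geqslant 0$, to controlling model integrals
\[
I_k(z) = \int_{-\infty}^{\infty} \upgamma(t)\, \upvarphi(t)\, \frac{z^{2it}}{\Gamma(2it + k + 1)} \cdot \frac{t \, \nd t}{\cosh(\pi t)},
\]
with $z = x/(2y)$ in one case and, after another $t \mapsto -t$, $z = xy/2$ in the other; both satisfy $z \Lt u$.

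The heart of the argument is a contour shift. I would move the line of integration in $I_k(z)$ down to $\mathrm{Im}(t) = -1/2 + \vepsilon$ (or, in the $z = xy/2$ case, up to $\mathrm{Im}(t) = 1/2 - \vepsilon$). This is the deepest shift permitted by the holomorphy strip of $\upgamma$ (Definition \ref{defn: space Q(r)}) and by the nearest pole of $t/\cosh(\pi t)$ at $t = \mp i/2$; no poles of the integrand are crossed. On the shifted line one has $|z^{2it}| = z^{1 - 2\vepsilon}$; Stirling's formula gives $|\Gamma(2it + k + 1)|^{-1} \cdot |t/\cosh(\pi t)| \Lt |t|^{1/2 - 2\vepsilon - k}$; the Gaussian $\upvarphi(t)$ localizes $|\mathrm{Re}(t) - T| \Lt \varPi T^{\vepsilon}$; and $|\upgamma(t)| \Lt T^{\vkappa}$ there. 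Multiplying these bounds and integrating should yield $|I_k(z)| \Lt \varPi\, T^{\vkappa - 1/2 + \vepsilon - k}\, z^{1 - 2\vepsilon}$. The $k = 0$ contributions from the two choices of $z$ combine to $\Lt \varPi\, T^{\vkappa}\, u$, matching the claim; the slight mismatch $u^{1 - 2\vepsilon}$ versus $u$ is absorbed by the extra factor $T^{-1/2 + \vepsilon}$ provided $\vepsilon$ is chosen small enough, and if $u$ is genuinely polynomial-small in $T$, one pushes the contour past the poles $t = \mp i(m + 1/2)$ (whose residues are negligible because $\upvarphi$ evaluated there decays like $\exp(-T^{2\vepsilon})$ under the standing assumption $\varPi \leqslant T^{1 - \vepsilon}$) to gain arbitrarily high powers of $z$.

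The higher series terms $k \geqslant 1$ carry an additional $(x/2)^{2k} \Lt u^{2k}$ together with the polynomial saving $T^{-k}$ coming from the $1/\Gamma(2it + k + 1)$ bound on the shifted contour, so their sum over $k$ is a rapidly convergent geometric series absorbed into the $k = 0$ estimate. The main technical point, if any, is the bookkeeping in coordinating the direction of the contour shift with the choice of $z \in \{x/(2y),\, xy/2\}$ and verifying the Stirling estimates at the edge of the strip of holomorphy; both proceed exactly as for $H^+$ in \cite[\S 4]{Qi-GL(2)xG(2)-RS}, the only change being the use of the $I$-Bessel series in place of the $J$-Bessel series.
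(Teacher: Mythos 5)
The paper's own proof shifts the contour to $\mathrm{Re}(it) = 1/2 + \vepsilon$, i.e.\ to $\mathrm{Im}(t) = -(1/2+\vepsilon)$, \emph{past} the pole of $t/\cosh(\pi t)$ at $t = -i/2$. You stop at $\mathrm{Im}(t) = -1/2 + \vepsilon$, on the near side, and justify this by writing that it is ``the deepest shift permitted by the holomorphy strip of $\upgamma$.'' That is a misreading of Definition~\ref{defn: space Q(r)}: the strip is $|\mathrm{Im}(t)| \leqslant 1/2 + \vepsilon$, so going to $\mathrm{Im}(t) = -(1/2+\vepsilon')$ with $0 < \vepsilon' < \vepsilon$ is allowed. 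The pole at $t = -i/2$ is not an obstacle: its residue contains the factor $\upvarphi(\mp i/2) = \exp\big(-(T^2 - \tfrac14 \mp iT)/\varPi^2\big)$, whose modulus is $\Lt \exp(-T^{2\vepsilon})$ under the standing assumption $\varPi \leqslant T^{1-\vepsilon}$, hence negligible (and the accompanying Bessel factor is $\Lt u$ anyway). With the deeper shift one gets $|z^{2it}| = z^{1+2\vepsilon'}$ with exponent \emph{larger} than $1$, and since $z \leqslant u \Lt 1$ this is already $\leqslant z \Lt u$; the lemma follows with no further work.

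By stopping short you obtain $z^{1-2\vepsilon}$, which for $z < 1$ is \emph{bigger} than $z$, i.e.\ the wrong direction. Your patch---absorbing the deficit into the $T^{-1/2+\vepsilon}$ factor---only works when $z \Gt T^{-c/\vepsilon}$ for some fixed $c$, so it fails when $u$ (hence $z$) is extremely small, a regime the lemma must cover (compare Corollary~\ref{cor: u<T}, which asserts the $u$-linear bound uniformly for $u \Lt T$). Your fallback---pushing past the further poles $t = \mp i(m+1/2)$ to gain arbitrary powers of $z$---is not available: that leaves the strip where $\upgamma$ is holomorphic and where \eqref{4eq: bound for w, 1} is assumed. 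In short, the idea is the right one (power series plus a vertical contour shift, exactly as in \cite[Lemma 4.1]{Qi-GL(2)xG(2)-RS}), but the shift must go to the \emph{far} side of $t = \mp i/2$; the residue crossed is exponentially small, and that is the whole point. A minor slip: after symmetrization, $H^-$ should involve $I_{2it}(x) - I_{-2it}(x)$, not $I_{2it}(x) + I_{-2it}(x)$; this does not affect the estimates, but it is worth fixing.
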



\begin{proof}
	
	This lemma follows from shifting the integral contour in \eqref{app: H + (x)} and \eqref{app: H - (x), 2} up to $\mathrm{Re} (it) = 1/2 + \vepsilon$. For more details, see the proof of Lemma 4.1 in \cite{Qi-GL(2)xG(2)-RS}. Note that $I_{\vnu} (z) = e^{-\frac 1 2 \pi i \vnu} J_{\vnu} (i z)$ according to \cite[3.7]{Watson}. 
\end{proof}

For the rest of this section, let us assume that $ \upgamma (t) $ lies in the space $ \mathscr{T}_{\tau}$. 

\begin{lem}\label{lem: Bessel} 
Let $\upgamma (t) $ be in the space $ \mathscr{T}_{\tau}$.  Let $A \geqslant 0$. Then for $x \Gt T^{-A}$ we have   \begin{equation}\label{4eq: H(x,y) = I(v,w)}
	\begin{split} 
		H^{\pm} (x, y) = \varPi T^{1+\tau}  \int_{-\varPi^{\vepsilon}/\varPi}^{\varPi^{\vepsilon}/ \varPi} 
		g (  \varPi r) \exp (  2i  T r) \cos  ( f_{\scriptscriptstyle \pm } (r; v, w) )   \nd r  + O \big(T^{-A}\big)  ,   
	\end{split}
\end{equation}  
where 
\begin{align}\label{4eq: f(r,v,w)}
	f_{\scriptscriptstyle \pm } (r; v, w) =  v \exp  (r) \pm   w \exp (- r) , \qquad 	v = \frac {xy} {2}, \quad w = \frac {x/y} 2, 
\end{align}
and  $g (r)$ is a Schwartz function {\rm(}namely, $ g^{(n)} (r) = O ( (|r|+1)^{-m})$ for any $m, n \geqslant 0${\rm)}. 
\end{lem}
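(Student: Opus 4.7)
The plan is to parallel the analysis of $H^+(x,y)$ in \cite[Lemma 4.2]{Qi-GL(2)xG(2)-RS}. First, substitute $h(t;y)$ from \eqref{4eq: defn h} into \eqref{app: H + (x)} and \eqref{app: H - (x), 2} and symmetrize $t\mapsto -t$ to reduce $H^{\pm}(x,y)$ to sums of integrals of the form
\[
\frac{2i}{\pi}\int_{-\infty}^{\infty}\upgamma(t)\upvarphi(t)\,y^{-2it}\mathcal{B}_{2it}(x)\frac{t\,dt}{\cosh(\pi t)},
\]
with $\mathcal{B}=J$ for $H^+$ and $\mathcal{B}=I$ for $H^-$. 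Insert the classical Mehler--Sonine / Schl\"afli representations
\begin{align*}
J_\nu(x) &= \frac{1}{\pi}\int_0^\pi\cos(x\sin\theta-\nu\theta)\,d\theta-\frac{\sin(\pi\nu)}{\pi}\int_0^\infty e^{-\nu s-x\sinh s}\,ds,\\
I_\nu(x) &= \frac{1}{\pi}\int_0^\pi e^{x\cos\theta}\cos(\nu\theta)\,d\theta-\frac{\sin(\pi\nu)}{\pi}\int_0^\infty e^{-\nu s-x\cosh s}\,ds
\end{align*}
at $\nu=2it$; the identity $\sin(2i\pi t)=i\sinh(2\pi t)$ combined with $\sinh(2\pi t)/\cosh(\pi t)=2\sinh(\pi t)$ makes the resulting kernels real-analytic and of polynomial growth in $t$.

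Next, interchange the order of integration so that, for each fixed auxiliary variable $\theta\in[0,\pi]$ or $s\in[0,\infty)$, the inner $t$-integral has the form $\int\upgamma(t)\upvarphi(t)\,p(t)\,e^{-2it\psi}\,dt$, with $\psi$ a linear combination of $\log y$ and the auxiliary variable and $p(t)$ polynomially bounded. Substituting $t=T+\varPi r$ turns $\upvarphi$ into $e^{-r^2}$, and the smoothness bound \eqref{4eq: bound for u, 2} for $\upgamma\in\mathscr{T}_\tau$ permits repeated integration by parts in $r$ to truncate the $r$-range to $|r|\leq\varPi^{\vepsilon}/\varPi$ with tail error $O(T^{-A})$, using $x\Gt T^{-A}$ to absorb the remaining contributions into the error term. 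This extracts the prefactor $\varPi T^{1+\tau}$ from $\upgamma(T+\varPi r)\cdot p(T+\varPi r)\cdot\varPi$, leaves a Schwartz function $g(\varPi r)$ of $r$, and produces the carrier oscillation $e^{2iTr}$ from the linear term in the Taylor expansion of $-2i(T+\varPi r)\psi$.

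Finally, evaluate the remaining $\theta$- and $s$-integrals by stationary phase/Laplace asymptotics. Combining the two pieces of each Bessel representation and the two symmetric $\upgamma(\pm t)y^{\mp 2it}\upvarphi(\pm t)$ contributions yields a single clean oscillation $\cos(v e^r\pm w e^{-r}) = \cos(f_\pm(r;v,w))$, with the sign $\pm$ in $f_\pm$ determined by the $\sinh\leftrightarrow\cosh$ duality between the kernels of $J_\nu$ and $I_\nu$. The principal difficulty lies in this last step: the analysis must be carried out uniformly across the regimes $y\ll 1$, $y\approx 1$, $y\gg 1$, where the dominant saddle migrates between the $\theta$-integral on $[0,\pi]$ and the $s$-integral on $[0,\infty)$, and the two pieces of each Bessel representation must recombine cleanly into the single oscillation $\cos(f_\pm)$ with the correct sign; otherwise the argument is a faithful transposition of the $H^+$ analysis in \cite[\S 4]{Qi-GL(2)xG(2)-RS}.
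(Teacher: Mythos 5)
Your plan diverges from the paper's at the very first step and, as you yourself flag, the divergence creates a difficulty you do not resolve. The paper does \emph{not} use the Schl\"afli-type representations you quote. It uses the two Mehler--Sonine integrals (Watson 6.21(12), 6.22(13))
\begin{align*}
  J_{2it}(x)-J_{-2it}(x) &= \frac{2\sinh(\pi t)}{\pi i}\int_{-\infty}^{\infty}\cos(x\cosh r)\,e^{2itr}\,dr,\\
  K_{2it}(x) &= \frac{1}{2\cosh(\pi t)}\int_{-\infty}^{\infty}\cos(x\sinh r)\,e^{2itr}\,dr,
\end{align*}
inserted directly into \eqref{app: H + (x)} and into the $K$-form \eqref{app: H - (x)} of $H^-$ (not the $I$-form, which is only used in Lemma \ref{lem: x<1}). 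These choices are tailor-made for the lemma: (i) they are full-line Fourier integrals in $r$ against $e^{2itr}$, so multiplying by $y^{-2it}$ is absorbed by the substitution $r\to r+\log y$; (ii) this substitution turns the hyperbolic phase into exactly $x\cosh(r+\log y)=v\,e^{r}+w\,e^{-r}$ or $x\sinh(r+\log y)=v\,e^{r}-w\,e^{-r}$, producing $\cos(f_{\pm})$ directly, with no recombination of pieces; and (iii) the $\sinh(\pi t)$ (resp.\ $1/\cosh(\pi t)$) factors pair with the $t/\cosh(\pi t)$ (resp.\ $\sinh(\pi t)\,t$) kernels in $H^{\pm}$ to leave essentially $\tanh(\pi t)\,t\approx t$, so the only remaining $t$-dependence is in $\upgamma(t)\upvarphi(t)$, and partial integration in the Gaussian $t$-variable gives the $r$-truncation, the factor $\varPi T^{1+\tau}$, the carrier $e^{2iTr}$, and the Schwartz weight $g(\varPi r)$, exactly as in \cite[Lemma 4.3]{Qi-GL(2)xG(2)-RS}.

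By contrast, your Schl\"afli representations split each Bessel function into a compact $\theta$-integral on $[0,\pi]$ plus a half-line $s$-integral. Neither piece has the structure needed: the $s$-integral contains $e^{-2its-x\sinh s}$, which is a Laplace (non-oscillatory in $s$) kernel, so multiplying by $y^{-2it}=e^{-2it\log y}$ cannot be absorbed by a shift in $s$ without moving $x\sinh s$ into $x\sinh(s+\log y)$ at the cost of changing the integration contour; and the $\theta$-integral is over a bounded interval, so a shift by $\log y$ produces boundary terms that cannot be ignored and the phase $x\sin\theta$ does not turn into $ve^{r}\pm we^{-r}$ under any shift. The recombination of the two pieces into a single $\cos(f_{\pm}(r;v,w))$ uniformly over all regimes of $y$ --- precisely the point you call ``the principal difficulty'' --- does not in fact happen with this representation, and you give no argument for it. This is a genuine gap: the proposal does not establish \eqref{4eq: H(x,y) = I(v,w)}. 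The fix is to replace the Schl\"afli representations by the two Mehler--Sonine full-line Fourier integrals above, after which the proof goes through as you otherwise sketch.
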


\begin{proof}
This lemma follows essentially from inserting the Mehler--Sonine integrals (see \cite[6.21 (12), 6.22 (13)]{Watson})
	\begin{align*}
		 {J_{2it} (x) - J_{-2it} (x)} 
		& =   \frac {2  \sinh (  \pi t) } {\pi i} \int_{-\infty}^{\infty} \cos (x \cosh r) \exp (2 i t r) \nd r ,  \\
	  K_{2it} (x) & = \frac 1 {2 \cosh (\pi t)}    \int_{-\infty}^{\infty} \cos (x \sinh r) \exp (2itr) \nd r, 
\end{align*} 
into the Bessel integrals as in \eqref{app: H + (x)} and \eqref{app: H - (x)}.  For more details, see the proof of Lemma 4.3 in \cite{Qi-GL(2)xG(2)-RS}. Note that
\begin{align*}
	x \cosh ( r+ \log y) = v \exp  (r) +   w \exp (- r), \quad x \sinh (r+ \log y) = v \exp  (r) -   w \exp (- r). 
\end{align*}
\end{proof}


Moreover, Lemma 4.5 in \cite{Qi-GL(2)xG(2)-RS} may be extended easily by applying stationary phase analysis to the integral in \eqref{4eq: H(x,y) = I(v,w)}. 

\begin{lem}\label{lem: analysis of I}
	Let   $T^{\vepsilon} \leqslant \varPi \leqslant T^{1-\vepsilon}$.  
	Then $H^{\pm} (x, y) = O   (T^{-A} )$ if $v, w \Lt T$. 
\end{lem}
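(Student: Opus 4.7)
The plan is to apply non-stationary phase analysis to the oscillatory integral representation of $H^{\pm}(x, y)$ provided by Lemma \ref{lem: Bessel}. First, I will expand $\cos(f_{\pm}(r; v, w)) = \tfrac{1}{2}(e^{i f_{\pm}} + e^{-i f_{\pm}})$ to decompose the integral in \eqref{4eq: H(x,y) = I(v,w)} into four oscillatory integrals of the form
\begin{equation*}
I_{\alpha\beta} = \int_{-\varPi^{\vepsilon}/\varPi}^{\varPi^{\vepsilon}/\varPi} g(\varPi r)\, \exp(i \Phi_{\alpha\beta}(r)) \, \nd r, \qquad \Phi_{\alpha\beta}(r) = 2Tr + \alpha v e^{r} + \beta w e^{-r},
\end{equation*}
indexed by sign choices $\alpha, \beta \in \{\pm 1\}$.

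Next, I will establish the key phase estimate: on the short range $|r| \leqslant \varPi^{\vepsilon-1}$ one has $e^{\pm r} = 1 + O(\varPi^{\vepsilon-1})$, so
\begin{equation*}
|\Phi'_{\alpha\beta}(r)| = |2T + \alpha v e^{r} - \beta w e^{-r}| \geqslant 2T - (v+w)(1 + o(1)) \asymp T,
\end{equation*}
provided the implicit constant in the hypothesis $v, w \Lt T$ is taken small enough to prevent a stationary point. Higher derivatives satisfy $\Phi_{\alpha\beta}^{(n)}(r) \Lt v + w \Lt T$ for $n \geqslant 2$.

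Then I will perform $N$ integrations by parts, each of which divides by $\Phi'_{\alpha\beta}$ (gaining $T^{-1}$) and differentiates the smooth factor: derivatives of $g(\varPi r)$ contribute at most $\varPi$, while derivatives of $1/\Phi'_{\alpha\beta}$ produce at most $\Phi''_{\alpha\beta}/(\Phi'_{\alpha\beta})^{2} \Lt 1/T$. Hence each iteration shrinks the integrand by a factor $\Lt \varPi/T \leqslant T^{-\vepsilon}$. The boundary contributions at $r = \pm \varPi^{\vepsilon-1}$ will be negligible, since $g$ is Schwartz and so $|g(\pm\varPi^{\vepsilon})| \Lt_{m} \varPi^{-m}$ for any $m$. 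After $N$ iterations, $|I_{\alpha\beta}| \Lt \varPi^{\vepsilon-1} T^{-N\vepsilon}$; multiplying by the prefactor $\varPi T^{1+\tau}$ from \eqref{4eq: H(x,y) = I(v,w)} gives $H^{\pm}(x, y) \Lt T^{1+\tau+\vepsilon} T^{-N\vepsilon}$, which is $O(T^{-A})$ once $N$ is chosen large in terms of $A$, $\tau$, $\vepsilon$.

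The only delicate point—and the most subtle obstacle in the rigorous write-up—is the non-stationarity bound $|\Phi'_{\alpha\beta}(r)| \asymp T$: this requires that the implicit constant in $v, w \Lt T$ be small enough (strictly less than $1$) so that $v e^{r} + w e^{-r} < 2T$ on the integration range. This is a convention consistent with the subsequent use of this lemma in Section \ref{sec: Bessel} and parallels the strategy of the proof of Lemma 4.5 in \cite{Qi-GL(2)xG(2)-RS}, which the present lemma extends from $H^{+}$ to $H^{-}$.
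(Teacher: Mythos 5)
Your proposal is correct and matches the approach the paper takes, which is to apply non-stationary phase (repeated integration by parts) to the integral representation in \eqref{4eq: H(x,y) = I(v,w)}, exactly as in the proof of Lemma 4.5 in \cite{Qi-GL(2)xG(2)-RS} to which the paper defers. You also correctly flagged the one delicate point—that the implicit constant in $v, w \Lt T$ must be small so that the phase derivative $\Phi'_{\alpha\beta}(r) = 2T + \alpha v e^{r} - \beta w e^{-r}$ stays $\asymp T$ and no stationary point occurs on the integration range—which is indeed the convention underlying the lemma's subsequent use (the cutoffs $c \Lt mN/T$ can be made with a large implied constant precisely to keep $u$ a small multiple of $T$).
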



By Lemmas \ref{lem: x<1}, \ref{lem: Bessel}, and \ref{lem: analysis of I}, we conclude that if $\upgamma (t)$ is in $ \mathscr{Q}_{\tau} \cap  \mathscr{T}_{\tau} $, then the Bessel integrals $H^{\pm} (x, y)$ are negligibly small for $ xy+x/y \Lt T $. 

\begin{coro}\label{cor: u<T}
Let $\upgamma (t)$ be in the space $ \mathscr{Q}_{\tau} \cap  \mathscr{T}_{\tau} $. Set $u = xy +x/y$ as before. Then for  $ u \Lt T $, we have 
	\begin{align}
		H^{\pm} (x, y) \Lt \varPi T^{\tau} \min \left\{ u,  \frac 1 {T^A}  \right\} ,
	\end{align}
for any $A \geqslant 0$. 
\end{coro}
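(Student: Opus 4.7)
The argument is a short two-case analysis assembling Lemmas \ref{lem: x<1}, \ref{lem: Bessel}, and \ref{lem: analysis of I}. The estimate $H^{\pm}(x, y) \Lt \varPi T^{\tau} \min\{u, 1/T^{A}\}$ encodes the conjunction of the two separate bounds
\begin{align*}
H^{\pm}(x, y) \Lt \varPi T^{\tau} u \qquad \text{and} \qquad H^{\pm}(x, y) \Lt_{A} \varPi T^{\tau}/T^{A} \ \text{ for every } A \geqslant 0,
\end{align*}
so it suffices to verify each.

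For the linear-in-$u$ bound, I would split at $u = 1$. When $u \Lt 1$, the inclusion $\mathscr{Q}_{\tau} \cap \mathscr{T}_{\tau} \subseteq \mathscr{Q}_{\tau}$ allows me to apply Lemma \ref{lem: x<1} with $\vkappa = \tau$, giving directly $H^{\pm} \Lt \varPi T^{\tau} u$. When $1 \Lt u \Lt T$, the value $u$ already exceeds $1/T^{A}$ for every fixed $A \geqslant 0$, so $\min\{u, 1/T^{A}\} = 1/T^{A}$ and the linear branch is automatically subsumed by the negligible one.

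For the negligible bound I would appeal to Lemma \ref{lem: analysis of I}. Its hypothesis $v, w \Lt T$ is immediate from $v + w = u/2 \Lt T$. The lemma supplies $H^{\pm} = O(T^{-A})$ for any $A \geqslant 0$; since $\varPi T^{\tau} \Lt T^{1+\tau}$, applying the lemma with $A$ replaced by, say, $A + \tau + 2$ absorbs the prefactor and yields $H^{\pm} \Lt_{A} \varPi T^{\tau}/T^{A}$ as required. Taking the minimum of the two bounds gives the corollary.

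The only mildly delicate point is the implicit restriction $x \Gt T^{-A'}$ inherited from the representation \eqref{4eq: H(x,y) = I(v,w)} underlying Lemma \ref{lem: analysis of I}. Since $A'$ there is a free parameter, one may enlarge it at will; any residual regime of superpolynomially small $x$ forces either $u$ to be correspondingly tiny, in which case the $u$-bound from Lemma \ref{lem: x<1} already suffices, or $y + 1/y$ to be superpolynomially large, in which case the direct power-series expansion of $J_{2it}$ and $I_{2it}$ near the origin (via \eqref{app: H + (x)} and \eqref{app: H - (x), 2}) produces a bound negligible to any polynomial order. This bookkeeping step is the only place that requires any thought; everything else is a clean combination of the three preceding lemmas.
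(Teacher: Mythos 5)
Your organization into the two branches of the $\min$ and the assembly of Lemmas \ref{lem: x<1}, \ref{lem: Bessel}, and \ref{lem: analysis of I} is exactly the paper's (unwritten) argument, and it is correct. There is, however, one caveat on the final paragraph. For real $t$, the leading term of the power series, $(x/2)^{\pm 2it}/\Gamma(1\pm 2it)$, has modulus $\asymp e^{\pi|t|}/\sqrt{|t|}$ by Stirling; after dividing by $\cosh(\pi t)$ in \eqref{app: H + (x)} one is left with $\asymp |t|^{-1/2}$, which is \emph{not} small, no matter how tiny $x$ is. So the negligibility you want in the residual regime cannot come from the power series alone. It comes from the rapid oscillation of the combined phase $y^{\mp 2it}(x/2)^{\pm 2it}/\Gamma(1\pm 2it)$ against the Gaussian weight $\upvarphi(t)$: either one shifts the contour as in the proof of Lemma \ref{lem: x<1} (which, within the strip allowed by $\mathscr{Q}_{\tau}$, only yields a power of $u$, not superpolynomial decay), or one performs repeated integration by parts using that the relevant phase derivative, roughly $2\log(4t/(xy))$ once the Stirling contribution of $1/\Gamma(1\mp 2it)$ is included, stays bounded away from zero on the effective support $|t-T| \Lt \varPi\log T$ when $u \Lt T$. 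In the paper this regime never actually arises, since in every application $x = 4\pi\sqrt{mn}/c$ with $c$ already confined to a polynomial range, so $x \Gt T^{-C}$ for some fixed $C$ and Lemma \ref{lem: Bessel} applies directly; your bookkeeping concern is legitimate to flag, but the justification you give for dispatching it does not, as written, close the gap.
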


\begin{remark}\label{rem: gamma (t)}
	Later, in practice, $ \upgamma (t)$ will be those $ \upgamma_{\upsigma}^{\delta} (v, t) $ as defined in \eqref{3eq: defn G1} and \eqref{3eq: defn G2} for $|\mathrm{Im}(v)| \leqslant \log T$ or  simply $\upgamma (t) \equiv 1$ in \S \ref{sec: Bessel, 2}. For the odd case $\delta = 1$, it follows from Lemma \ref{lem: G(v,t)} that $ \upgamma_{\upsigma}^{\delta} (v, t) $  lies in  $ \mathscr{Q}_{ \upsigma \vepsilon /2} \cap \mathscr{T}_{ \upsigma \vepsilon /2}  $ if we let $\mathrm{Re} (v) = \vepsilon$   in \eqref{3eq: V(y;t), 2}, so we may just let $\vkappa = \tau = \upsigma \vepsilon /2$.    For the even case $\delta = 0$, however, it is slightly more subtle: In view of Lemma \ref{lem: G(v,t)}, we may apply Corollary \ref{cor: u<T} with $\mathrm{Re} (v) = 1$ and $  \tau = \upsigma /2 $ say, for  $u \Lt T$, and use the integral representation \eqref{4eq: H(x,y) = I(v,w)} in Lemma \ref{lem: Bessel} with $ \mathrm{Re} (v) = \vepsilon$ and $ \tau = \upsigma \vepsilon /2$ for  $u \Gt T$.  
\end{remark}

\begin{remark}
	In {\rm\cite[\S 4]{Qi-GL(2)xG(2)-RS}} we also studied the Bessel transform of a certain $h_{\ssstyle -} (t; y)$ that contains the factor $\upepsilon (t)$   defined similarly to \eqref{6eq: epsilon}, but here we shall not deal with this case {\rm(}see the discussion at the end of \S \ref{sec: AFE}{\rm)}. 
\end{remark}

\begin{remark}
	In the study of related Maass central $L$-values,  the $H^{+}$ and $H^{-}$ Bessel integrals have phases $x \cosh r$ and $x \sinh r$, and hence behave quite differently in \cite{XLi2011} or \cite[\S 7]{Young-Cubic}, but here, as we have just seen,   the phases are $f_{\scriptscriptstyle \pm } (r; v, w) = v \exp  (r) \pm   w \exp (- r)$ and may be analyzed more or less in the same way. 
\end{remark}

\section{A Useful Unsmoothing Lemma}

As in \eqref{4eq: defn phi}, for large parameters $T, \varPi$ with $T^{\vepsilon} \leqslant \varPi \leqslant T^{1-\vepsilon}$,  define
\begin{align}\label{8eq: phi}
	\upvarphi (t) =	\upvarphi_{T, \varPi} (t) = \exp \left( - \frac {(t-T)^2}  {\varPi^2} \right). 
\end{align}
In this section, we introduce a useful unsmoothing process as in \cite[\S 3]{Ivic-Jutila-Moments} by an average of the weight $\upvarphi_{T, \varPi} $ in the $T$-parameter.


\begin{defn}
	Let $   H \leqslant T / 3 $ and $T^{\vepsilon} \leqslant \varPi \leqslant H^{1-\vepsilon}  $. Define  
	\begin{align}\label{5eq: defn w(nu)}
		\uppsi (t) = \uppsi_{T, \varPi, H} (t) = \frac 1 { \sqrt{\pi}  \varPi} \int_{\, T- H}^{T + H} \upvarphi_{K, \varPi} (t) \nd K  .
	\end{align}
\end{defn}

The next lemma is essentially   Lemma 5.3 in \cite{Qi-Liu-Moments}, adapted from the arguments in \cite[\S 3]{Ivic-Jutila-Moments}. 

\begin{lem}\label{lem: unsmooth, 1}
	Let  $\lambda $  be a real constant.   	Suppose that $a_f $ is a sequence such that  
	\begin{align}\label{10eq: assumption}
		\sum_{f \in \SB_{\delta}}  \omega_f \upvarphi   (t_f) |a_f|  = O_{\lambda, \vepsilon}  \big( \varPi T^{\lambda}  \big) 
	\end{align}
	for any $\varPi$ with $ T^{\vepsilon} \leqslant \varPi \leqslant T^{1-\vepsilon}  $. Then for  $ \varPi^{1+\vepsilon} \leqslant   H \leqslant T / 3 $ we have
	\begin{align}
		\mathop{\sum  }_{ f \in \SB_{\delta} : \,  |t_f - T| \shskip \leqslant H}  \omega_f  a_f =  \sum_{f \in \SB_{\delta}} \omega_f \uppsi  (t_f) a_f + O_{\lambda, \vepsilon}  \big( \varPi T^{\lambda}  \big) . 
	\end{align}
\end{lem}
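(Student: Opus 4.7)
The strategy is to show that $\uppsi(t)$ is an honest smoothing of the sharp indicator $\mathbf{1}_{[T-H,\,T+H]}(t)$, with the pointwise discrepancy dominated by two Gaussians of width $\varPi$ centered at the endpoints $T \pm H$, and then apply the hypothesis \eqref{10eq: assumption} with the center shifted to $T \pm H$.

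\emph{Step 1 (closed form for $\uppsi$).} I would first change variable $K = t + \varPi u$ in \eqref{5eq: defn w(nu)} to obtain
\begin{align*}
    \uppsi(t) = \frac{1}{\sqrt{\pi}} \int_{(T-H-t)/\varPi}^{(T+H-t)/\varPi} e^{-u^2} \, \nd u,
\end{align*}
which is a difference of two error-function values.

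\emph{Step 2 (pointwise Gaussian majorant).} Set $\chi(t) = \mathbf{1}_{[T-H,\,T+H]}(t) - \uppsi(t)$. I would establish, with an absolute implied constant,
\begin{align*}
    |\chi(t)| \Lt \upvarphi_{T+H,\, \varPi}(t) + \upvarphi_{T-H,\, \varPi}(t), \qquad t \in \BR.
\end{align*}
The verification splits into three cases. For $t \in (T-H, T+H)$, $1 - \uppsi(t)$ is the sum of two Gaussian tails $\frac{1}{\sqrt{\pi}}\int_a^\infty e^{-u^2} \nd u$ at $a = (T+H-t)/\varPi$ and $a = (t-T+H)/\varPi$; the standard tail bound $\int_a^\infty e^{-u^2} \nd u \leqslant e^{-a^2}/(2a)$ for $a \geqslant 1$ yields the claim, while for $0 < a < 1$ one uses the trivial bound $\leqslant \sqrt{\pi}/2$ together with $\upvarphi_{T \pm H,\,\varPi}(t) \geqslant e^{-1}$. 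For $t$ outside $[T-H, T+H]$, $|\chi(t)| = \uppsi(t)$ is itself a single Gaussian tail that is controlled by the same inequalities.

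\emph{Step 3 (apply the hypothesis at shifted centers).} Since
\begin{align*}
    \Bigg| \mathop{\sum}_{f \in \SB_\delta :\, |t_f - T| \shskip \leqslant H} \omega_f \shskip a_f \, - \, \sum_{f \in \SB_\delta} \omega_f \shskip \uppsi(t_f) \shskip a_f \Bigg| \leqslant \sum_{f \in \SB_\delta} \omega_f \shskip |\chi(t_f)| \shskip |a_f|,
\end{align*}
the majorant from Step 2 reduces the task to estimating the two sums $\sum_f \omega_f \upvarphi_{T \pm H,\,\varPi}(t_f) |a_f|$. Applying \eqref{10eq: assumption} with the center $T$ replaced by $T \pm H$ and the same bandwidth $\varPi$ bounds each by $O_{\lambda,\vepsilon}(\varPi T^\lambda)$, completing the proof.

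\emph{Main obstacle.} The hypothesis \eqref{10eq: assumption} as written fixes the center at $T$, whereas Step 3 invokes it with the shifted centers $T \pm H$. The condition $H \leqslant T/3$ is crucial here, as it ensures $T \pm H \asymp T$, so the bound remains of the same form $O_{\lambda,\vepsilon}(\varPi T^\lambda)$ after the shift; in applications this shift-uniformity holds automatically because \eqref{10eq: assumption} is derived from an asymptotic that is uniform for the spectral parameter in a neighborhood of $T$ of length $\asymp T$. This is the one point where care is required; all remaining ingredients are Gaussian tail estimates.
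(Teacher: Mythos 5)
Your proof is correct and follows essentially the same Ivic--Jutila unsmoothing argument the paper invokes (it cites Lemma 5.3 of Qi--Liu, which adapts \cite[\S 3]{Ivic-Jutila-Moments}). Your single pointwise Gaussian majorant for the discrepancy, $|\chi(t)| \Lt \upvarphi_{T+H,\varPi}(t) + \upvarphi_{T-H,\varPi}(t)$, is a clean packaging of the usual ``negligible tail plus $O(1)$-boundary-strip'' split and if anything avoids a spurious $T^{\vepsilon}$ loss. The obstacle you flag---that \eqref{10eq: assumption} is literally stated with center $T$ but must be invoked at $T \pm H$---is the one genuine subtlety, and your resolution is exactly right: the constraint $H \leqslant T/3$ keeps $T \pm H \asymp T$, and in every application here (with $a_f$ equal to $\delta(L(s_f,f)\neq 0)$, $L(s_f,f)$, or $|L(s_f,f)|^2$) the input bound in \eqref{10eq: assumption} comes from the Weyl law or a mean-Lindel\"of estimate that is uniform in the spectral center, so the shifted instance is genuinely available.
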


Later, we shall choose $a_f$ to be $\delta (L (s_f,   f    ) \neq 0)$ (the Kronecker $\delta$ that detects $L (s_f,   f    ) \neq 0$), $L (s_f,   f    ) $, or $|L (s_f,   f    )|^2$  (see \S \S \ref{sec: conclusion, non-vanishing}, \ref{sec: unsmooth}, and \ref{sec: non-vnaishing, 2}). Note that in all these cases  we  may let  $\lambda = 1$ or $ 1+\vepsilon$ in \eqref{10eq: assumption} in light of the (harmonic weighted) Weyl law or the mean Lindel\"of bounds.

{\large \part{Twisted and Mollified Moments} }

\vspace{5pt}

	\section{Preliminaries}

\subsection{Poisson Summation Formula}

\delete{\begin{lem}
	Let $\phi ( n | c)$ be an arithmetic function of period $c$. Then for $F (x) \in C_c^{\infty} \allowbreak (-\infty, \infty)$ we have 
	\begin{align}\label{4eq: Poisson Cor}
		\sum_{n =-\infty}^{\infty}  \phi (n| c)  F (n) = \frac 1 {c} \sum_{n =-\infty}^{\infty} \hat {\phi }  (n|c) \hat{F} (n/   c), 
	\end{align}
	where $\hat{F} (y)$ is   the   Fourier transform of $ F (x) $ defined by  
	\begin{align}\label{4eq: Fourier}
		\hat {F} (y) = \int_{-\infty}^{\infty} F (x) e (-xy) \nd x , 
	\end{align}
	and
	\begin{align}
		\hat {\phi }  (n|c) =  \sum_{\valpha (\mathrm{mod}\, c) }  \phi (\valpha|c) e \Big(     \frac {   \valpha {n}  } {c} \Big) .  
	\end{align}
\end{lem}}

The following Poisson summation formula is essentially \cite[(4.25)]{IK}. 
\begin{lem}\label{lem: Poisson}  Let $\valpha,  c $ be integers with $c \geqslant 1$. Then for  $F (x) \in C_c^{\infty} (-\infty, \infty)$ we have 
	\begin{align}
		\sum_{n } e \lp  \frac {\valpha n} c \rp F (n) & =   \sum_{n \shskip \equiv \shskip - \valpha (\mod c) }  \hat {F} \Big(  \frac {n} {c} \Big)  ,
	\end{align}
	where $ \hat {F} (y)$ is the Fourier transform of $F (x)$ defined by
	\begin{align*}
		\hat {F} (y) =	\int_{-\infty}^{\infty} F (x) e  ( - x y)   \nd x .
	\end{align*}
\end{lem}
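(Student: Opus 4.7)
This is a twisted Poisson summation formula, and my plan is to reduce it to the classical identity $\sum_{n} G(n) = \sum_{n} \hat{G}(n)$ for Schwartz $G$ by splitting the summation variable into residue classes modulo $c$. Write $n = mc + r$ with $r$ ranging over a complete set of residues modulo $c$ and $m \in \mathbf{Z}$. Since $e (\valpha n / c) = e (\valpha r /c)$ depends only on $r$, this gives
\begin{equation*}
	\sum_{n} e \lp \frac{\valpha n}{c} \rp F(n) = \sum_{r \, (\mathrm{mod}\, c)} e \lp \frac{\valpha r}{c} \rp \sum_{m \in \mathbf{Z}} F(mc + r).
\end{equation*}

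Next, for each residue $r$ define $G_r (x) = F(cx + r)$, whose Fourier transform is $\hat{G}_r (y) = c^{-1} e (r y/c) \hat{F} (y/c)$ by a change of variables. Classical Poisson summation applied to $G_r$ (valid because $F \in C_c^{\infty}$, hence $\hat{F}$ is rapidly decreasing) yields
\begin{equation*}
	\sum_{m \in \mathbf{Z}} F(mc+r) = \frac{1}{c} \sum_{n \in \mathbf{Z}} e \lp \frac{r n}{c} \rp \hat{F} \lp \frac{n}{c} \rp.
\end{equation*}

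Substituting this back and interchanging the two finite/absolutely convergent sums, the desired identity reduces to evaluating the inner sum $c^{-1} \sum_{r \, (\mathrm{mod}\, c)} e (r(\valpha + n)/c)$, which equals $1$ when $c \mid \valpha + n$ and vanishes otherwise by orthogonality of additive characters modulo $c$. This collapses the $n$-sum to the congruence condition $n \equiv -\valpha \pmod{c}$, giving the stated formula. There is no real obstacle here; the only delicate point is justifying the interchange of sums, which is immediate from the rapid decay of $\hat{F}$ arising from $F \in C_c^{\infty}$.
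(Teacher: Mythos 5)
Your proof is correct; every step checks out, and the interchange of sums is indeed justified by the rapid decay of $\hat F$. Note, however, that the paper gives no proof at all here — it simply cites \cite[(4.25)]{IK}. There is also a shorter route that avoids the residue-class decomposition and the orthogonality-of-characters step: apply classical Poisson summation directly to the modulated function $g(x) = e(\valpha x/c) F(x)$, whose Fourier transform is $\hat g(y) = \hat F(y - \valpha/c)$. This gives $\sum_n e(\valpha n/c) F(n) = \sum_n \hat F(n - \valpha/c)$, and reindexing $m = cn - \valpha$ (equivalently, summing over all $m \equiv -\valpha \pmod c$ and noting $m/c = n - \valpha/c$) produces the stated right-hand side at once. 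Your decomposition into arithmetic progressions is perfectly sound and is the standard route to the more general version of the formula (with an arbitrary periodic weight $\phi(n\,|\,c)$ in place of $e(\valpha n/c)$), so it buys generality; the direct modulation argument buys brevity in this special case.
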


\subsection{Bounds for the Riemann $\zeta$ Function} 
The currently best sub-convexity bound for Riemann's $\zeta (s)$ is due to Bourgain \cite[Theorem 5]{Bourgain}: 
\begin{align}
	\zeta (1/2+it) \Lt  (1+|t| )^{\frac {13}{84} + \vepsilon}.
\end{align}
The best bound for the mean square of $ \zeta (1/2+it) $ on short intervals is due to Bourgain and Watt \cite[Theorem 3]{BW-Riemann-2}:
\begin{align}
	\int_{T - U }^{ T + U } |\zeta (1/2+it)|^2 \nd t \Lt U \log T,  \qquad U = T^{\frac {1273} {4053} + \vepsilon} . 
\end{align}
From this and Cauchy, we deduce easily the next lemma. 
\begin{lem}\label{lem: Riemann}
	Let $1 \leqslant H \leqslant T/3$. We have
	\begin{align}\label{2eq: Riemann, 1}
		\int_{T - H }^{ T + H } |\zeta (1/2+it)|  \nd t \Lt T^{\vepsilon} \big(H^{\frac 1 2 } T^{ \frac {1273} {8106} }   + H \big), 
	\end{align}
	\begin{align}\label{2eq: Riemann, 2}
		\int_{T - H }^{ T + H } |\zeta (1/2+it)|^2 \nd t \Lt T^{\vepsilon} \big(T^{ \frac {1273} {4053} } + H \big). 
	\end{align}
\end{lem}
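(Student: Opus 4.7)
\medskip

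\noindent\textbf{Proof proposal.} The plan is to obtain \eqref{2eq: Riemann, 2} directly from the Bourgain--Watt mean square bound by covering $[T-H,T+H]$ with shifted intervals of length $U = T^{1273/4053+\vepsilon}$, and then to obtain \eqref{2eq: Riemann, 1} from \eqref{2eq: Riemann, 2} by a single application of Cauchy--Schwarz. The key numerical coincidence that makes the two exponents match is $1273/8106 = \tfrac{1}{2} \cdot 1273/4053$.

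For \eqref{2eq: Riemann, 2}, I would split into two cases. If $H \leqslant U$, then $[T-H,T+H] \subset [T-U,T+U]$, so the Bourgain--Watt estimate gives
\[
\int_{T-H}^{T+H} |\zeta(1/2+it)|^2 \, \nd t \;\leqslant\; \int_{T-U}^{T+U} |\zeta(1/2+it)|^2 \, \nd t \;\Lt\; U \log T \;\Lt\; T^{\vepsilon} \cdot T^{1273/4053}.
\]
If $H > U$, I would tile $[T-H,T+H]$ with $O(H/U)$ intervals of length $2U$, apply Bourgain--Watt on each, and sum, producing a total contribution of $O((H/U) \cdot U \log T) = O(H \log T)$. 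Combining the two cases yields \eqref{2eq: Riemann, 2}.

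For \eqref{2eq: Riemann, 1}, Cauchy--Schwarz gives
\[
\int_{T-H}^{T+H} |\zeta(1/2+it)| \, \nd t \;\leqslant\; (2H)^{1/2} \biggl(\int_{T-H}^{T+H} |\zeta(1/2+it)|^2 \, \nd t \biggr)^{1/2},
\]
and plugging in \eqref{2eq: Riemann, 2} together with $\sqrt{a+b} \leqslant \sqrt{a} + \sqrt{b}$ produces $T^{\vepsilon}\bigl(H^{1/2} T^{1273/8106} + H\bigr)$, as claimed.

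There is no serious obstacle here: the statement is a packaging of the Bourgain--Watt bound, and the only thing to check is the bookkeeping between the two regimes $H \lessgtr U$ in the second inequality and the consistency of the exponents $1273/4053$ and $1273/8106$ under Cauchy--Schwarz. I would include a one-line remark that the sub-convexity bound of Bourgain is not actually used in deriving this lemma (it is quoted in the surrounding text for other purposes).
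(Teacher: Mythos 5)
Your proposal is correct and is exactly the argument the paper has in mind; the text preceding the lemma simply says ``From this [the Bourgain--Watt bound] and Cauchy, we deduce easily the next lemma,'' which is the tiling-plus-Cauchy--Schwarz computation you wrote out. Your added remark that Bourgain's subconvexity bound is not used here is also accurate.
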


\subsection{Approximate Functional Equations} 
\label{sec: AFE}

By applying \cite[Theorem 5.3]{IK} to $L ( s  ,   f    )$ at $s = s_f$ and to  $ L (s+it_f ,   f    ) L (s-it_f,   f    ) $ at $s = 1/2$, we deduce from \eqref{3eq: L(s,f)} and \eqref{3eq: Lambda (Q)} the approximate functional equations:  
\begin{align}\label{3eq: AFE} 
	L ( s_f  ,   f    ) =     \sum_{n=1}^{\infty}   \frac{ \lambda_f (n)  }{n^{1/2 + i t_f}} V_{1}^{\delta_f}  (   n/X  ; t_f ) + (-)^{\delta_f} \upepsilon  (t_f)  \sum_{n=1}^{\infty}   \frac{ \lambda_f (n)  }{n^{1/2 - i t_f}} V_{1}^{\delta_f}  (   n X  ; - t_f ), 
\end{align}
\begin{align}\label{3eq: AFE, 2} 
	\begin{aligned}
		|L ( s_f  ,   f    )|^2 = 2 \sum_{n_1=1}^{\infty} \sum_{n_2=1}^{\infty}  \frac{ \lambda_f (n_1) \lambda_f (n_2)  }{n_1^{1/2 + i t_f} n_2^{1/2 - i t_f}} V_{2}^{\delta_f} (  n_1 n_2   ; t_f ), 
	\end{aligned}
\end{align}
with
\begin{align}\label{3eq: V(y;t)}
	V_{\upsigma}^{\delta} (y; t) =  \frac 1 {2\pi i}  \int_{(3)} \upgamma_{\upsigma}^{\delta}   (v, t)   y^{-v}  \frac {\nd v} {v} , \qquad \upsigma = 1, 2, 
\end{align}
where
\begin{align}\label{3eq: defn G1}
	\upgamma_{1}^{\delta}  (v, t) =   \exp (v^2) \frac {\Gamma_{\mathbf{R}}^{\delta}  (1/2 + v  )  \Gamma_{\mathbf{R}}^{\delta}  (1/2 + v + 2 it)  } {\Gamma_{\mathbf{R}}^{\delta}  (1/2    )  \Gamma_{\mathbf{R}}^{\delta}  (1/2   + 2 it) } , 
\end{align}
\begin{align}\label{3eq: defn G2}
	\upgamma_{2}^{\delta}  (v, t) =  \exp (v^2) \frac {\Gamma_{\mathbf{R}}^{\delta}  (1/2 + v  )^2 \Gamma_{\mathbf{R}}^{\delta}  (1/2 + v + 2 it) \Gamma_{\mathbf{R}}^{\delta}  (1/2 + v - 2 it)  } { \Gamma_{\mathbf{R}}^{\delta}  (1/2    )^2 \Gamma_{\mathbf{R}}^{\delta}  (1/2   + 2 it) \Gamma_{\mathbf{R}}^{\delta}  (1/2   - 2 it) } ,
\end{align}   
\begin{align}\label{6eq: epsilon}
	\upepsilon  (t) = \frac {\Gamma_{\mathbf{R}}^{\delta}  (1/2   - 2 it)} {\Gamma_{\mathbf{R}}^{\delta}  (1/2  + 2 it)}. 
\end{align}
Moreover, in parallel, we have 
\begin{align}\label{2eq: FE, Eis}
	\zeta (1/2) \zeta (1/2+2it)  = \sum_{n=1}^{\infty}   \frac{ \tau_{  it} (n)  }{n^{1/2 + i t}} V_{1}^{0}  (   n/X  ; t ) +   \upepsilon  (t)  \sum_{n=1}^{\infty}   \frac{ \tau_{  it} (n)  }{n^{1/2 - i t}} V_{1}^{0}  (   n X  ; - t ), 
\end{align}
\begin{align}\label{2eq: FE, Eis, 2}
	|\zeta (1/2) \zeta (1/2+2it) |^2 = 2 \sum_{n_1=1}^{\infty} \sum_{n_2=1}^{\infty}  \frac{ \tau_{  it} (n_1) \tau_{  it} (n_2)  }{n_1^{1/2 + i t} n_2^{1/2 - i t}} V_{2}^{0} (  n_1 n_2   ; t ), 
\end{align}
up to certain exponentially small residual error terms. 

The next two lemmas are standard consequences of the Stirling formula. Compare  \cite[Proposition 5.4]{IK}, \cite[Lemma 1]{Blomer}, and \cite[Lemmas 3.6, 3.7]{Qi-GL(2)xG(2)-RS}.

\begin{lem}\label{lem: AFE}
	Let $U \Gt 1$ and $\theta > 0$.  Then for   $\upsigma = 1, 2$, we have
	\begin{align}\label{3eq: V(y;t), 1}
		{V}_{\upsigma}^{\delta}  (y; t) \Lt_{A  } \bigg(1 + \frac {y} {\sqrt{|t|^{\upsigma} +1 } } \bigg)^{-A},  
	\end{align} 
	for any $A \geqslant 0$,  and 
	\begin{align}\label{3eq: V(y;t), 2}
		{V}_{\upsigma}^{\delta}  (y; t) =  \frac 1 {2\pi i}  \int_{\theta - i U}^{\theta +i U}  \upgamma_{\upsigma}^{\delta}  (v, t)   y^{-v}  \frac {\nd v} {v} + O_{\theta   } \bigg( \frac {(|t|+1)^{\upsigma \theta /2 }  } {y^{\theta} \exp (U^2/2)}  \bigg).
	\end{align} 
\end{lem}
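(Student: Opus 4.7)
The plan is to analyze the contour integral
\begin{align*}
V_{\upsigma}^{\delta}(y;t) = \frac{1}{2\pi i}\int_{(3)} \upgamma_{\upsigma}^{\delta}(v,t) y^{-v} \frac{dv}{v}
\end{align*}
by shifting contours and applying Stirling's formula to the gamma ratios occurring in $\upgamma_{\upsigma}^{\delta}(v,t)$. The crucial structural input is that $\exp(v^2)$ provides rapid decay in the imaginary direction of $v$, so all relevant integrals converge absolutely, and that for fixed $\text{Re}(v) = \sigma_0 \geq 0$, Stirling gives
\begin{align*}
\frac{\Gamma_{\mathbf{R}}^{\delta}(1/2 + v + 2it)}{\Gamma_{\mathbf{R}}^{\delta}(1/2 + 2it)} \Lt (|t|+|\text{Im}(v)|+1)^{\sigma_0/2}
\end{align*}
uniformly. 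Analogous estimates hold for the other gamma ratios appearing in \eqref{3eq: defn G1} and \eqref{3eq: defn G2}, where the factor $\Gamma_{\mathbf{R}}^{\delta}(1/2+v-2it)/\Gamma_{\mathbf{R}}^{\delta}(1/2-2it)$ in the $\upsigma=2$ case contributes the additional $(|t|+1)^{\sigma_0/2}$ factor, accounting for why the exponent is $\upsigma \theta/2$ rather than $\theta/2$.

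For the first bound \eqref{3eq: V(y;t), 1}, I would shift the contour from $\text{Re}(v) = 3$ to $\text{Re}(v) = A$ for any $A \geq 0$; no poles are crossed since the only pole of $\upgamma_{\upsigma}^{\delta}(v,t)/v$ in the right half plane comes from the $1/v$ factor, which stays to the left. On the shifted line $|y^{-v}| = y^{-A}$, and the Stirling estimate combined with the rapid decay from $\exp(v^2) = \exp(A^2 - s^2 + 2iAs)$ (for $v = A + is$) yields absolute convergence with total bound $O_A(y^{-A}(|t|^{\upsigma}+1)^{A/2})$. Combining with the trivial bound $V_{\upsigma}^{\delta}(y;t) = O(1)$ obtained on the original line $\text{Re}(v) = 3$ then produces the claimed uniform bound in terms of $(1+y/\sqrt{|t|^{\upsigma}+1})^{-A}$.

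For the second bound \eqref{3eq: V(y;t), 2}, I would shift the contour from $(3)$ to $(\theta)$ (again no pole is crossed since $\theta > 0$) and truncate at height $U$. The error from the truncated tails is controlled by
\begin{align*}
\int_{|s| \geq U} |\upgamma_{\upsigma}^{\delta}(\theta + is, t)| \, \frac{y^{-\theta}}{|\theta + is|}\, ds,
\end{align*}
and since $|\exp((\theta+is)^2)| = \exp(\theta^2 - s^2)$, the Gaussian tail past $|s| = U$ is bounded by $\exp(-U^2)$, which combined with Stirling gives the stated error $O_\theta(y^{-\theta} (|t|+1)^{\upsigma \theta/2} \exp(-U^2/2))$ after absorbing polynomial factors into the Gaussian and using $\exp(\theta^2) \Lt_\theta 1$. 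The main (and only) obstacle is bookkeeping the Stirling estimates uniformly in both $v$ and $t$ on the strip, but since $\exp(v^2)$ dominates any polynomial growth in $\text{Im}(v)$, everything is routine.
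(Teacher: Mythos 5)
Your proposal follows the standard contour-shift plus Stirling approach, which is exactly what the paper intends — the paper itself does not give a proof but cites \cite[Proposition 5.4]{IK}, \cite[Lemma 1]{Blomer}, and \cite[Lemmas 3.6, 3.7]{Qi-GL(2)xG(2)-RS} as standard. Your treatment of the truncation error in \eqref{3eq: V(y;t), 2} is correct: on $\mathrm{Re}(v) = \theta$ the factor $|\exp(v^2)| = e^{\theta^2 - s^2}$ dominates every polynomial in $|\mathrm{Im}(v)|$, the exponential factors in the Gamma-ratios cancel against each other (the $e^{-\pi|s|/4}$ from $\Gamma_{\mathbf{R}}^{\delta}(1/2+v)$ offsets the $e^{+\pi|s|/4}$ worst case from $\Gamma_{\mathbf{R}}^{\delta}(1/2+v\pm 2it)/\Gamma_{\mathbf{R}}^{\delta}(1/2\pm 2it)$), and integrating the tail $|s|\ge U$ produces $e^{-U^2/2}$ with the polynomial absorbed, giving the stated error $(|t|+1)^{\upsigma\theta/2}y^{-\theta}\exp(-U^2/2)$.

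There is, however, a small gap in your argument for \eqref{3eq: V(y;t), 1}. You obtain the ``trivial bound'' $V_{\upsigma}^{\delta}(y;t)=O(1)$ by estimating on the original line $\mathrm{Re}(v)=3$, but on that line $|y^{-v}|=y^{-3}$, so this only yields $O(y^{-3})$, which is bounded only for $y \gtrsim 1$. In the approximate functional equation $y=n/X$ with $X=T^{1/2+\vepsilon}$ can be much smaller than $1$, and precisely there the claimed bound $(1+y/\sqrt{|t|^{\upsigma}+1})^{-A}\asymp 1$ is what is used. To obtain $V_{\upsigma}^{\delta}(y;t)=O(1)$ for $y<1$, one should instead shift the contour \emph{left} to $\mathrm{Re}(v) = -\vepsilon$ (which stays to the right of the first Gamma poles at $\mathrm{Re}(v)=-1/2-\delta$), pick up the residue $\upgamma_{\upsigma}^{\delta}(0,t)=1$ at the simple pole $v=0$, and bound the remaining integral by $O(y^{\vepsilon}(|t|+1)^{-\upsigma\vepsilon/2})=O(1)$. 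This is a one-line fix, and aside from it your proof is correct and takes essentially the same route as the cited sources.
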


\begin{lem}\label{lem: G(v,t)}
	Let $\mathrm{Re}(v) > 0$ be fixed. Then for  $|\mathrm{Re} (it)| < \mathrm{Re}(v/2) + 1/4 + \delta/2 $, we have
	\begin{align}\label{3eq: bound for G, 1}
		\upgamma_{\upsigma}^{\delta}  (v, t)  \Lt_{ \mathrm{Re} (v)  }    (|t|+1)^{\upsigma    \mathrm{Re} (v/ 2) }  ,  
	\end{align} 
	and, for $t$ real, we have 
	\begin{align}\label{6eq: bound for G, 2}
		\begin{aligned}
			\frac {\partial^{n } \upgamma_{\upsigma }^{\delta}  (v, t) } {\partial t^n  } \Lt_{n,  \mathrm{Re} (v)   }   (|t|+1)^{   {\upsigma} \mathrm{Re} (v/2) }  \lp \frac { \log (|t|+2) } {|t|+1 } \rp^{n}   ,  
		\end{aligned}
	\end{align} 
	for any $n \geqslant 0$. 
\end{lem}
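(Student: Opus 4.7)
The plan is to apply Stirling's asymptotic formula to the ratios of gamma factors. Unpacking \eqref{3eq: GammaR(s)}, one sees that both $\upgamma_{1}^{\delta}(v,t)$ and $\upgamma_{2}^{\delta}(v,t)$ reduce, up to $\exp(v^2)$ and an elementary factor $\pi^{-v/2}$ or $\pi^{-v}$, to products of one or two ratios of the form
\[
\frac{\Gamma\bigl((1/2+v+\delta)/2\pm it\bigr)}{\Gamma\bigl((1/2+\delta)/2\pm it\bigr)}.
\]
The hypothesis $|\mathrm{Re}(it)| < \mathrm{Re}(v/2) + 1/4 + \delta/2$ is precisely what keeps every gamma argument in the open right half-plane, uniformly bounded away from the poles and branch cut of $\log \Gamma$.

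For \eqref{3eq: bound for G, 1}, I would use the shifted Stirling asymptotic $\log \Gamma(s+a) - \log \Gamma(s) = a \log s + O(|a|^2/|s|)$ on each ratio to obtain $(\pm it)^{v/2}(1 + O_{\mathrm{Re}(v)}(1/(|t|+1)))$; multiplying out yields exactly the claimed $(|t|+1)^{\upsigma\mathrm{Re}(v/2)}$ (one ratio for $\upsigma = 1$, two for $\upsigma = 2$). The prefactor $|\exp(v^2)| = \exp(\mathrm{Re}(v)^2 - \mathrm{Im}(v)^2) \leqslant \exp(\mathrm{Re}(v)^2)$ is absorbed into the constant depending on $\mathrm{Re}(v)$.

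For the derivative bound \eqref{6eq: bound for G, 2}, I would differentiate $\log \upgamma_{\upsigma}^{\delta}$ in $t$. Each factor produces a difference $i \psi((1/2+v+\delta)/2 + it) - i \psi((1/2+\delta)/2 + it)$; the asymptotic $\psi(s) = \log s + O(1/|s|)$ yields $\log(1 + v/((1/2+\delta)/2 + it)) + O(1/(|t|+1)) = O_{\mathrm{Re}(v)}(1/(|t|+1))$. Higher log-derivatives use $\psi^{(k)}(s) = O(1/|s|^{k})$ in the same strip to give $(\log \upgamma_{\upsigma}^{\delta})^{(n)} = O_{n, \mathrm{Re}(v)}(1/(|t|+1)^n)$, and Fa\`a di Bruno's formula converts these into $\upgamma_{\upsigma}^{\delta,(n)}/\upgamma_{\upsigma}^{\delta} = O(1/(|t|+1)^n)$. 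Combined with \eqref{3eq: bound for G, 1}, this is actually stronger than the stated $(\log(|t|+2)/(|t|+1))^n$; the extra logarithm is cosmetic, chosen so that $\upgamma_{\upsigma}^{\delta}(v,\cdot)$ fits into the space $\mathscr{T}_{\tau}$ of Definition \ref{defn: space T(r)}. The proof is essentially a routine Stirling exercise; the only point requiring some care is keeping the implicit constants uniform in $\mathrm{Im}(v)$ (needed because $v$ later traverses a vertical line in the contour \eqref{3eq: V(y;t), 2}), which follows from the uniformity of Stirling in bounded-real-part strips together with the Gaussian decay of $\exp(v^2)$.
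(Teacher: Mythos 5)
The paper offers no proof of this lemma, stating only that it is a ``standard consequence of the Stirling formula'' and pointing to \cite[Proposition 5.4]{IK}, \cite[Lemma 1]{Blomer}, and \cite[Lemmas 3.6, 3.7]{Qi-GL(2)xG(2)-RS}; your sketch is precisely the standard Stirling argument those references carry out, including the key observation that the Gaussian factor $\exp(v^2)$ keeps the constants uniform as $v$ ranges over the vertical lines arising from \eqref{3eq: V(y;t), 2}. The argument is correct and matches the intended route.
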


Later, the analysis will be restricted to the range $|t_f-T|, \, |t - T|  \leqslant \varPi \log T$, so it follows from \eqref{3eq: V(y;t), 1} that the second sum  in \eqref{3eq: AFE} or \eqref{2eq: FE, Eis} after $\upepsilon (t_f)$ or $\upepsilon (t) $ will be negligibly small  if we choose $$X = T^{1/2+\vepsilon},$$
while the first sum  may be truncated effectively at $ n = T^{1+\vepsilon} $. Note that the error term in \eqref{3eq: V(y;t), 2} is also negligibly small  if we choose $$U = \log T. $$

\subsection{Bi-variable Notation} \label{sec: bi-varialbe}

Subsequently, particularly in our study of the second twisted moment in \S \ref{sec: 2nd moment},  we shall use bold $\boldsymbol{m}$, $\boldsymbol{n}$, ... for the  (positive integral) pairs  $(m_1, m_2)$, $(n_1, n_2)$, .... Define the norm $ \|\boldsymbol{n}\| = n_1 n_2 $, the fraction $\langle \boldsymbol{n} \rangle = n_1/ n_2$,  the dual $\widetilde{\boldsymbol{n}} = (n_2, n_1)$, the reduction $\boldsymbol{n}^{\star} = \boldsymbol{n} / \mathrm{gcd}(\boldsymbol{n}) $, and the product $ \boldsymbol{m n} = (m_1 n_1, m_2 n_2)  $. Let $\delta (\boldsymbol{n})$ be the Kronecker $\delta$ that detects $n_1 = n_2$.  

Define 
\begin{align}\label{3eq: ct(n)}
	\mathrm{c}_t ( \boldsymbol{n}) = \mathrm{Re} \big((n_1/n_2)^{it} \big) =  \cos (t \log (n_1/n_2)). 
\end{align}
For an arithmetic function $a (n)$, define 
\begin{align}\label{3eq: f(n), bi-var}
	a (\boldsymbol{n}) = a (n_1) a (n_2). 
\end{align} 

For instance,  \eqref{2eq: moments M2} now reads 
\begin{align}\label{3eq: moments M2}
	\begin{aligned}
		\SC^{\delta}_{2}  ( \boldsymbol{m} ) = \sum_{f \in \SB_{\delta} } \omega_f   \lambda_f ( \boldsymbol{m}  ) \mathrm{c}_{t_f} (\boldsymbol{m})   |L (s_f,    f     )|^2     \exp \left(  - (t_f - T)^2/\varPi^2 \right).
	\end{aligned}
\end{align}  

Occasionally, we shall use $\boldsymbol{x} $
for the positive real pair $(x_1 , x_2)$, and, similar to \eqref{3eq: f(n), bi-var}, for a smooth function $\varww (x)$ write 
\begin{align}\label{3eq: w(x)}
	\varww (\boldsymbol{x}) = \varww ( {x}_1 )  \varww ({x}_2) . 
\end{align}


\section{Setup}

For large parameters $T, \varPi$ with $T^{\vepsilon} \leqslant \varPi \leqslant T^{1-\vepsilon}$, let us define 
\begin{align}\label{5eq: phi}
	\upvarphi (t) = \exp \left( - \frac {(t-T)^2}  {\varPi^2} \right),
\end{align}
and consider  
\begin{align}\label{5eq: C1(m)}
	{\SC}^{\delta}_{1\, \natural }  (m) = \sum_{f \in \SB_{\delta} } \omega_f \lambda_f (m)   \big ( m^{-i t_f} L (s_f,    f     )   \upvarphi (t_f) + m^{i t_f} L (\overline{s}_f,    f     )   \upvarphi (- t_f) \big) ,
\end{align}
\begin{align}\label{5eq: C2(m)}
	{\SC}^{\delta}_{2\, \natural }  (\boldsymbol{m} ) = \sum_{f \in \SB_{\delta} }   \omega_f \lambda_f (\boldsymbol{m} ) \mathrm{c}_{ t_f} (\boldsymbol{m})   | L (s_f,    f     )|^2  (  \upvarphi (t_f) +    \upvarphi (- t_f)  ).
\end{align}
Note that 
${\SC}^{\delta}_{1\, \natural }  (m ) $ or $ {\SC}^{\delta}_{2\, \natural } (\boldsymbol{m} )  $ respectively differs from  ${\SC}^{\delta}_{1 } (m  ) $ or $ {\SC}^{\delta}_{2}  (\boldsymbol{m} ) $ as in \eqref{2eq: moments M1} or \eqref{2eq: moments M2} (see also \eqref{3eq: moments M2}) merely by  an exponentially small error term.  In parallel, the Eisenstein contributions read 
\begin{align}
	\SE_{1}  (m ) = \frac {2}  {\pi}   \cdot  \zeta (1/2)  \int_{-\infty}^{\infty} \omega(t) \tau_{  it} (m) m^{-it}  \zeta (1/2 +2it )  \upvarphi (t)  \nd t ,
\end{align} 
\begin{align}
	\begin{aligned}
		\SE_{2}  (\boldsymbol{m} )	=  \frac {2}  {\pi}   \cdot   \zeta (1/2)^{2}   \int_{-\infty}^{\infty} \omega(t) \tau_{  it} (\boldsymbol{m})   \mathrm{c}_{ t } (\boldsymbol{m})  |\zeta (1/2 +2it )|^{2}   \upvarphi (t)  \nd t .
	\end{aligned}
\end{align}

\begin{lem}\label{lem: Eis E1,2}
	We have   
	\begin{align}\label{5eq: E1(m)}
		\SE_{1} (m) = O  \big( T^{\vepsilon} \tau (m)   \big(\varPi^{\frac 1 2 } T^{ \frac {1273} {8106} }   + \varPi \big)  \big)  , 
	\end{align}
	\begin{align}\label{5eq: E2(m)}
		\SE_{2} (\boldsymbol{m} )  = O    \big( T^{\vepsilon} \tau  ( \boldsymbol{m} )     \big(T^{ \frac {1273} {4053} } + \varPi \big)  \big) . 
	\end{align}
\end{lem}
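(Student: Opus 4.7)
\textbf{Proof proposal for Lemma \ref{lem: Eis E1,2}.} The plan is to reduce both estimates to direct applications of Lemma \ref{lem: Riemann}, after trivially bounding the slowly-varying factors.

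First, I would truncate the $t$-integrals in $\SE_1(m)$ and $\SE_2(\boldsymbol{m})$ to $|t-T| \leq \varPi \log T$: the Gaussian factor $\upvarphi(t)$ in \eqref{5eq: phi} is negligibly small outside this range, so the tail contribution is $O(T^{-A})$ for any $A \geqslant 0$. On the remaining range, apply the standard bound $1/|\zeta(1+2it)| \ll (\log(|t|+2))^{O(1)} \ll T^{\vepsilon}$, which gives $\omega(t) \ll T^{\vepsilon}$. The divisor-like coefficients satisfy the trivial bounds $|\tau_{it}(m)| \leqslant \tau(m)$ and $|\tau_{it}(\boldsymbol{m})| \leqslant \tau(\boldsymbol{m})$, while $|m^{-it}| = 1$, $|\mathrm{c}_{t}(\boldsymbol{m})| \leqslant 1$, and $\upvarphi(t) \leqslant 1$. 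Thus
\begin{align*}
    |\SE_1(m)| \ll T^{\vepsilon} \tau(m) \int_{|t-T| \leqslant \varPi \log T} |\zeta(1/2+2it)| \, \nd t + O(T^{-A}),
\end{align*}
and similarly for $\SE_2(\boldsymbol{m})$ with $|\zeta(1/2+2it)|^2$ in place of $|\zeta(1/2+2it)|$.

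Next, substitute $u = 2t$ so that the integrals become (up to a factor $1/2$)
\begin{align*}
    \int_{|u - 2T| \leqslant 2 \varPi \log T} |\zeta(1/2+iu)| \, \nd u \quad \text{and} \quad \int_{|u - 2T| \leqslant 2\varPi \log T} |\zeta(1/2+iu)|^2 \, \nd u.
\end{align*}
Apply \eqref{2eq: Riemann, 1} with centre $2T$ and $H \asymp \varPi \log T$ to obtain the first integral is $\ll T^{\vepsilon}(\varPi^{1/2} T^{1273/8106} + \varPi)$, which yields the stated bound \eqref{5eq: E1(m)} for $\SE_1(m)$. Likewise \eqref{2eq: Riemann, 2} gives the second integral is $\ll T^{\vepsilon}(T^{1273/4053} + \varPi)$, yielding \eqref{5eq: E2(m)} for $\SE_2(\boldsymbol{m})$, after absorbing the log-factor into $T^{\vepsilon}$.

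Since everything reduces to stated bounds for short-interval moments of the Riemann zeta function, there is no real obstacle: the only thing to verify is that the substitution $u = 2t$ is compatible with the ranges in Lemma \ref{lem: Riemann}, which holds because $2T$ and $2\varPi \log T$ satisfy the hypothesis $1 \leqslant H \leqslant T/3$ (after absorbing constants into the $T^{\vepsilon}$ factor and, if needed, slightly enlarging the implicit window). The factors $\zeta(1/2)$ and $\zeta(1/2)^2$ appearing in front of the integrals are absolute constants and are absorbed into the implied constant.
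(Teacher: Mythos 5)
Your proposal is correct and follows essentially the same route as the paper: truncate the Gaussian weight to $|t-T| \leqslant \varPi\log T$, bound $\omega(t)$, $\tau_{it}$, and $\mathrm{c}_t$ trivially, and invoke \eqref{2eq: Riemann, 1}--\eqref{2eq: Riemann, 2} with $H=\varPi\log T$. The paper states this in one line; you have just spelled out the routine steps (including the harmless substitution $u=2t$), so there is no substantive difference.
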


\begin{proof}
	Since $ \omega (t) = O ( |t|^{\vepsilon} )$  by \cite[(3.11.10)]{Titchmarsh-Riemann}, it is clear that \eqref{5eq: E1(m)} and \eqref{5eq: E2(m)} follow from \eqref{2eq: Riemann, 1} and \eqref{2eq: Riemann, 2} in Lemma \ref{lem: Riemann} respectively, if we choose $H = \varPi \log T$.   
\end{proof}

\section{The Twisted First Moment}

\subsection{Application of Kuznetsov Formula} \label{sec: apply Kuz}

By the approximate functional equations \eqref{3eq: AFE} and \eqref{2eq: FE, Eis}, with $$X = T^{1/2+\vepsilon}, $$  we may write
\begin{align*}  
	 	{\SC}^{\delta}_{1\, \natural }  (m) +  (1-\delta) {\SE}_{1}  (m)  =    \sum_{n=1}^{\infty}   \frac{1} {  \sqrt{n}}   \Bigg\{  & \sum_{f \in \SB_{\delta} } \omega_f   {  \lambda_f (m    )  \lambda_f (n    )}    {V}_{1}^{\delta } (m,   n ; t_f)  \\
		  + \frac {1-\delta} {\pi} & \int_{-\infty}^{\infty} \omega(t) \tau_{  it} (m)  \tau_{  it} (n)   {V}_{1}^{0} (m,  n; t) \nd t \Bigg\} ,  
\end{align*} 
up to a negligibly small error (due to  \eqref{3eq: V(y;t), 1} in Lemma \ref{lem: AFE}), where 
\begin{align}\label{6eq: V+(n;t)}
	{V}_{1}^{\delta}  (m, n; t) = V_{1}^{\delta}    (  n/ X ; t  )  (m n)^{ - i t}  {\upvarphi (t)}  + V_{1}^{\delta}    (   n/X ; - t  )   (mn)^{  i t} {\upvarphi (- t)} . 
\end{align}  
It follows from the Kuznetsov trace formula in \eqref{2eq: Kuznetsov} that
\begin{align}\label{6eq: C+E=D+O}
	2{\SC}^{\delta}_{1\, \natural }  ( {m}) + 2 (1-\delta) {\SE}_{1}  (m) = \SD_1^{\delta} (m) +   \SO_+^{\delta} (m) + (-1)^{\delta}  \SO_-^{\delta} (m) + O \big(T^{-A}\big) , 
\end{align}
with the diagonal 
\begin{align}\label{6eq: D+-(m)}
	\SD_1^{ {\delta} }   (m) = \frac{1} {\sqrt{m}}    {  {H}_1^{\delta} (m)}  ,   
\end{align} 
where
\begin{align}\label{6eq: H+-(m)}
	 H_1^{\delta}  (m )  =  \frac 1 {\pi^2 }  \int_{-\infty}^{\infty}   {V}_{1}^{\delta}  (m,   m; t) \tanh(\pi t) t \nd t ,
\end{align}
and the off-diagonal 
\begin{align}\label{6eq: O(m)}
	\SO_{\pm}^{ {\delta} } (m) =    \sum_{n=1}^{\infty} \frac{1} {\sqrt{n}} \sum_{c=1}^{\infty} \frac {S( m, \pm n; c)} {c}   {H}_1^{\delta\, \pm} \bigg(\frac {4\pi \sqrt{mn}} {c} ; m,   n \bigg),  
\end{align}
where 
\begin{align}\label{6eq: Bessel, 1+}
	  {H}_1^{\delta\, +} (x; m, n) & = \frac {2i} {\pi}   \int_{-\infty}^{\infty}   {V}_{1}^{\delta}  (m, n; t) J_{2it} (x)  \frac {t \nd t} {\cosh (\pi t) } , \\ 
	  \label{6eq: Bessel, 1-}
	{H}_1^{\delta\, -} (x; m, n) & = \frac {4} {\pi^2}   \int_{-\infty}^{\infty}   {V}_{1}^{\delta}  (m, n; t) K_{2it} (  x)   {\sinh (\pi t)} t \, \nd t . 
\end{align} 

\subsection{Asymptotics for the Diagonal Sums}

	In view of \eqref{6eq: V+(n;t)}  and \eqref{6eq: H+-(m)}, we have 
	\begin{align*}
		{H}_{1}^{\delta} (m )  = \frac {2} {\pi^2}  \int_{-\infty}^{\infty}   V_{1}^{\delta}    (   m/ X ; t  ) m^{ - 2 i t}  {\upvarphi (t)}  \tanh(\pi t) t \nd t .
	\end{align*} 
	Keep in mind that  $\upvarphi (t)$ as in \eqref{5eq: phi} is exponentially small unless $|t - T | \leqslant \varPi \log T$. Thus, up to an exponentially small error, $\tanh(\pi t) $ is removable as $\tanh(\pi t) = 1 + O (\exp (-\pi T))$ on this range.   
	
	Let us first evaluate ${\SD}_{1}^{\delta} (1)$.  By shifting the integral contour  in \eqref{3eq: V(y;t)} from $\mathrm{Re} (v) = 3$ down to $\mathrm{Re} (v) = \vepsilon -1/2   $, we have
	\begin{align*}
		V_{1}^{\delta} (1/X; t) = 1 + O \bigg(  \frac 1   { { (X \sqrt{|t|+1}   )^{1/2   - \vepsilon}} } \bigg),  
	\end{align*} 
by the Stirling formula. 	Therefore 
	\begin{align*}
		{H}_{1}^{\delta} (1) = \frac {2} {\pi^2}  \int_{-\infty}^{\infty}     {\upvarphi (t)}  t \nd t + O  (\varPi T^{1/2 +\vepsilon}  ) , 
	\end{align*}
as $X = T^{1/2+\vepsilon}$, whereas 
\begin{align*}
	\int_{-\infty}^{\infty}   {\upvarphi (t)} t \nd t =    \varPi T \int_{-\infty}^{\infty}   \exp (-t^2)  \nd t =  {  \sqrt{\pi}}  \varPi T . 
\end{align*}
Consequently, 
\begin{align*}
	 \SD_{1}^{ \delta } (1 ) = \frac {2} {\pi\sqrt{\pi}} \varPi T   + O   (   {\varPi T^{1/2 + \vepsilon}}      ). 
\end{align*}
Note that in the odd case of $\SD_{1}^{ 1 } (1 )$ the error term may be improved into $O (\varPi / T^{1/2-\vepsilon})$ by shifting the integral contour further down to $\mathrm{Re} (v) = \vepsilon -3/2 $.

Next, we consider the case   $m > 1$. 	By inserting the integral in \eqref{3eq: V(y;t), 2}, with $\theta = \vepsilon$ and  $U = \log T$, we arrive at
\begin{align*}
	 {H}_{1}^{\delta} (m )  = \frac {1} {\pi^3 i}  \int_{\vepsilon-i\log T}^{\vepsilon+i\log T}  \int_{T-\varPi \log T}^{T+\varPi \log T}      \frac { \upgamma_{1}^{\delta} (v, t)    {\upvarphi (t)} t } {  m^{v+2it} / X^{v}  }    \frac {  \nd t \nd v} { v} + O \big(T^{-A} \big) . 
\end{align*} 
For $t $ and $ v$ on the integral domains, in view of  \eqref{3eq: defn G1}, it follows from the Stirling  formula that
\begin{align*} 
	\upgamma_{1}^{\delta} (v, t) = \uprho_{1}^{\delta} (v) \big( t^{v/2} + O  (    {T^{\vepsilon} } / T  ) \big) ,  \qquad \uprho_{1}^{\delta} (v)  =	 \exp \Big(v^2 + \frac {v} 2 \log ( i/\pi ) \Big)   \frac {\Gamma_{\mathbf{R}}^{\delta}  (1/2 + v  )  } {\Gamma_{\mathbf{R}}^{\delta}  (1/2    )} . 
\end{align*}  
Therefore,
\begin{align*} 
	 H_{1}^{\delta} (m)  = \frac {1} {\pi^3 i}  \int_{\vepsilon-i\log T}^{\vepsilon+i\log T}  \frac {\uprho_{1}^{\delta} (v)} {(m/X)^{v} }  \int_{T-\varPi \log T}^{T+\varPi \log T}        {    {\upvarphi (t)} t^{1+v/2} } m^{-2it}   \nd t \, \frac {   \nd v} { v} + O  (  {\varPi T^{\vepsilon}}  ) . 
\end{align*} 
For $m > 1$, the inner $t$-integral is Fourier of phase $ - 2 t \log m$, so by repeated partial integration we infer that the integral is negligibly small. Consequently, we have $ H_1^{\delta} (m) = O (\varPi T^{\vepsilon}) $, and hence
$$ \SD_{1}^{\delta } (m ) =   O \bigg( \frac {\varPi T^{  \vepsilon}} {\sqrt{m} }  \bigg) . $$ 

To summarize, we conclude with the following lemma for $\SD_{1}^{\delta} (m ) $. 

\begin{lem}\label{lem: H+-(m)}   We have
	\begin{align}
		\SD_{1}^{ \delta } (m ) = \frac {2} {\pi\sqrt{\pi}} \varPi T \cdot {\delta ({m, 1})} + O \bigg( T^{\vepsilon}    \varPi  \bigg( \sqrt{T} \delta (m, 1) + \frac 1 {\sqrt{m}} \bigg)    \bigg) . 
	\end{align}
\end{lem}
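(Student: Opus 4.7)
The plan is to reduce the study of $\SD_1^{\delta}(m) = H_1^{\delta}(m)/\sqrt{m}$ to that of the $t$-integral
\[
H_1^{\delta}(m) = \frac{2}{\pi^2}\int_{-\infty}^{\infty} V_1^{\delta}(m/X; t)\, m^{-2it}\, \upvarphi(t)\, \tanh(\pi t)\, t\,\nd t,
\]
which comes from inserting \eqref{6eq: V+(n;t)} into \eqref{6eq: H+-(m)} and using the even symmetry $t \mapsto -t$. Since $\upvarphi(t)$ is exponentially small outside $|t-T| \leqslant \varPi \log T$, I can replace $\tanh(\pi t)$ by $1$ at the cost of a negligible error, so the task is to analyze a Gaussian-weighted $t$-integral. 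I then split into the two cases $m=1$ and $m>1$.

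For $m=1$, the idea is to expand $V_1^{\delta}(1/X; t)$ by shifting the Mellin contour in \eqref{3eq: V(y;t)} from $\mathrm{Re}(v)=3$ down past the simple pole at $v=0$, picking up the residue $1$. Pushing all the way to $\mathrm{Re}(v)=\vepsilon - 1/2$ and invoking Stirling gives $V_1^{\delta}(1/X;t) = 1 + O((X\sqrt{|t|+1})^{-1/2+\vepsilon})$. Substituting back, the leading integral is the elementary Gaussian moment
\[
\int_{-\infty}^{\infty} \upvarphi(t)\, t\,\nd t \;=\; \sqrt{\pi}\,\varPi T,
\]
while the error integrates to $O(\varPi T^{1/2+\vepsilon})$ on account of $X = T^{1/2+\vepsilon}$. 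This yields the main term $(2/\pi\sqrt{\pi})\varPi T$ plus the claimed error in the $\delta(m,1)$ component.

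For $m>1$, I insert the truncated Mellin integral \eqref{3eq: V(y;t), 2} with $\theta=\vepsilon$ and $U = \log T$, so that the contour error is negligible. Stirling applied to \eqref{3eq: defn G1} factors the integrand as $\uprho_1^{\delta}(v)\bigl(t^{v/2} + O(T^{\vepsilon-1})\bigr)$ in the range $|t-T| \leqslant \varPi \log T$, so after swapping the order of integration the inner integral becomes
\[
\int \upvarphi(t)\, t^{1+v/2}\, m^{-2it}\,\nd t.
\]
The phase $-2t\log m$ is genuinely oscillatory since $m \geqslant 2$, and the non-oscillatory factor $\upvarphi(t)\, t^{1+v/2}$ is a Gaussian bump of width $\varPi$ centred at $T$ whose derivatives each cost only $1/\varPi$. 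Repeated integration by parts in $t$ (the key technical step) then saves arbitrary powers of $\varPi \log m \gg T^{\vepsilon}$ per round, driving the integral below any negative power of $T$. The remaining $v$-integral contributes at most a factor of $\log T$, so after dividing by $\sqrt{m}$ the $m>1$ contribution fits inside $O(T^{\vepsilon}\varPi/\sqrt{m})$.

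The main obstacle I anticipate is bookkeeping the Stirling approximation uniformly for $v$ on the short segment $\mathrm{Re}(v)=\vepsilon$, $|\mathrm{Im}(v)| \leqslant \log T$, and verifying that integration by parts in the $m>1$ case genuinely gains a factor $(\varPi \log m/T^{\vepsilon})^{-1}$ per step despite the Gaussian weight, which is precisely where the assumption $\varPi \geqslant T^{\vepsilon}$ is used. Once these routine but delicate estimates are in place, combining the $m=1$ and $m>1$ analyses and dividing by $\sqrt{m}$ yields the lemma in the stated form.
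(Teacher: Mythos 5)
Your proposal follows essentially the same route as the paper's proof: reduce to the $t$-integral defining $H_1^{\delta}(m)$, discard $\tanh(\pi t)$ on the effective range $|t-T|\leqslant\varPi\log T$, handle $m=1$ by a contour shift of the $V$-function to $\mathrm{Re}(v)=\vepsilon-1/2$ picking up the residue $1$ at $v=0$ and evaluating the Gaussian first moment, and handle $m>1$ by inserting the truncated Mellin representation, applying Stirling to $\upgamma_1^{\delta}(v,t)$, and killing the oscillatory $t$-integral of phase $-2t\log m$ via repeated integration by parts. The only small imprecision is the per-step gain: each integration by parts saves a factor of order $\varPi\log m/\log T$ (not of order $\varPi\log m$ itself), since differentiating $\upvarphi(t)$ costs $\log T/\varPi$ on the effective support; this still gives $\gg T^{\vepsilon'}$ per step by $\varPi\geqslant T^{\vepsilon}$ and $m\geqslant 2$, so the conclusion is unaffected.
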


\delete{\begin{remark}
	Actually, the integral in     may be evaluated explicitly by {\rm\cite[3.896 4, 3.952 1]{G-R}} and there will arise the exponential factor $\exp (-\varPi^2 \log^2 m)$.  
\end{remark}}

\subsection{Estimation of the Off-diagonal Sums}  \label{sec: off-diagonal}

 Let us  consider  the off-diagonal sums $	\SO_{\pm}^{\delta} (m)$ given by  \eqref{6eq: V+(n;t)}, \eqref{6eq: O(m)}, \eqref{6eq: Bessel, 1+}, and \eqref{6eq: Bessel, 1-}.

By \eqref{3eq: V(y;t), 1} in Lemma \ref{lem: AFE} (see also the discussion at the end of \S \ref{sec: AFE}) along with  a smooth dyadic partition,  at the cost of a negligible error,  we may confine the $n$-sum  in \eqref{6eq: O(m)}  on the range $   n \sim N $ for dyadic $ N \Lt T^{1+\vepsilon}$ (namely, $N$ is of the form $2^{j/2}$).  
Next, we insert the expression  of $ V_{1}^{\delta}     (    n / X ; t  ) $ as in \eqref{3eq: V(y;t), 2} in Lemma \ref{lem: AFE} into \eqref{6eq: V+(n;t)},  \eqref{6eq: Bessel, 1+}, and \eqref{6eq: Bessel, 1-}. After this, there arise  the Bessel transforms $H_1^{\delta \, \pm} (x, y; v)$ of the  function  
\begin{align*}
	 h_{1}^{\delta} (t; y; v) = \upgamma_{1}^{\delta} (v, t)  y^{-2it} \upvarphi (t) + \upgamma_{1}^{\delta} (v, - t)  y^{2it} \upvarphi (- t), 
\end{align*} 
for 
\begin{align*}
	 x = \frac {4\pi \sqrt{mn}} {c}, \qquad y =   \sqrt{m n} .  
\end{align*}
 Since $\delta$ and $v$ are not essential in our analysis, so they will usually be omitted in the sequel.  
 By Corollary \ref{cor: u<T} and Remark \ref{rem: gamma (t)}, along with the Weil bound \eqref{3eq: Weil}, we may further restrict the $c$-sum  in \eqref{6eq: O(m)}   to the range $c  \Lt  m N/T $. 
It follows that, up to  a negligible error, the off-diagonal sum $ \SO_{\pm}^{\delta} (m)  $ has bound 
\begin{align}\label{7eq: O1(m), 1}
	 \SO_{\pm}^{\delta} (m) \Lt  \frac {T^{\vepsilon}} {\sqrt{N}}  \max_{N \Lt T^{1+\vepsilon} }     \int_{\vepsilon-i\log T}^{\vepsilon+i\log T}   \big|\SO_{\pm}^{\delta} (m ; N;  v) \big|   \frac {\nd v} {v} , 
\end{align}
where, if we omit  $\delta$ and $v$ from the notation, 
\begin{align}\label{6eq: Opm(m; N)}
	\SO_{\pm}  (m ; N) = \sum_{c \Lt mN/T }     \sum_{n \sim N}       \frac   {S(m, \pm n; c)} c    \varww \Big(  \frac { n} {N} \Big) {H}_{1}^{ \pm }  \bigg(\frac {4\pi \sqrt{mn}} {c} ;   \sqrt{m n}  \bigg), 
\end{align} 
with
\begin{align*}
	\varww (x) = \frac {\varvv (x)} {x^{1/2+v}}, 
\end{align*}
for a certain fixed smooth weight $\varvv  \in C_c^{\infty}[1,2]$. Note that 
we have uniformly 
\begin{align*}
	\varww^{(j) } (x) \Lt_{j, \vepsilon} \log^{j} T. 
\end{align*}

Next, in \eqref{6eq: Opm(m; N)}, we open  the Kloosterman sum $S (m, \pm n;c)$ (see \eqref{3eq: Kloosterman}) and insert the formula of the Bessel integrals $H_1^{ \pm} (x, y)$ as in  Lemma \ref{lem: Bessel}, obtaining
\begin{align}\label{7eq: O1(m), 2}
	\SO_{\pm}  (m ; N) \Lt  {\varPi T^{1+\vepsilon} }  \sum_{\pm}  	    \sum_{c \Lt mN/T }   \frac 1 c   \int_{-\varPi^{\vepsilon}/ \varPi}^{\varPi^{\vepsilon}/ \varPi} \big| \SO{}_{ \scriptscriptstyle \pm }^{\scriptscriptstyle \pm}  (r;   m, c ; N) \big|    \nd r  , 
\end{align} 
where  
\begin{align}
\SO{}_{ \scriptscriptstyle \pm  }^{\scriptscriptstyle \pm} (r;  m, c ; N) = \sumx_{\valpha (\mathrm{mod}\, c) } e   \lp \pm \frac {\widebar{\valpha} m } c  \rp   \sum_{n \sim N }      e \bigg(    \frac {( \valpha + m)  n} {c}   \bigg)   e \Big(       \frac { mn} {c} \uprho (r)  \Big)   \varww_{\scriptscriptstyle \pm }  \Big(  \frac { n} {N} \Big), 
\end{align}   
with 
$$\uprho (r) = \exp (r) - 1, $$    $\varww_{\scriptscriptstyle +} (x) = \varww (x) $, and $\varww_{\scriptscriptstyle -} (x) = \overline{\varww (x) }$. On applying to the $n$-sum the Poisson summation formula in Lemma \ref{lem: Poisson}, we have
\begin{align*} 
	\SO{}_{ \scriptscriptstyle \pm  }^{\scriptscriptstyle \pm} (r;  m, c ; N) & = N \sumx_{\valpha (\mathrm{mod}\, c) } e   \lp \pm \frac {\widebar{\valpha} m } c  \rp   \sum_{n \equiv   - \valpha - m (\mathrm{mod}\, c)}    \hat{\varww}_{\scriptscriptstyle \pm }  \bigg(  \frac { N (n-m \uprho (r))} {c}   \bigg) \\
	& = N \sum_{n: \, (n+m, c) = 1}  e \lp \mp \frac {\overline{  \overline{m} n + 1}} {c} \rp   \hat{\varww}_{\scriptscriptstyle \pm }  \bigg(  \frac { N (n-m \uprho (r))} {c}   \bigg) .
\end{align*} 
Note that the Fourier transform $ \hat{\varww}_{\scriptscriptstyle \pm } ( N (n-m \uprho (r))/ c ) $ is negligibly small unless
\begin{align}\label{6eq: |n-mr|}
	| n-m \uprho (r) | < \frac {c T^{\vepsilon}} {N} .  
\end{align}
As 
\begin{align*}
	\frac {c T^{\vepsilon}} {N} \Lt \frac {m T^{\vepsilon}} {T} , \qquad m \uprho (r) \Lt \frac {m \varPi^{\vepsilon}} {\varPi}, 
\end{align*}
we may restrict the $n$-sum to the range $ |n | \Lt    {m \varPi^{\vepsilon}} / {\varPi} $. Consequently, up to a negligible error, 
\begin{align*}
	\int_{-\varPi^{\vepsilon}/ \varPi}^{\varPi^{\vepsilon}/ \varPi} \big| \SO{}_{ \scriptscriptstyle \pm }^{\scriptscriptstyle \pm}  (r;   m, c ; N) \big|    \nd r \Lt N \sum_{|n| \Lt {m \varPi^{\vepsilon}} / {\varPi} }  \int_{-\varPi^{\vepsilon}/ \varPi}^{\varPi^{\vepsilon}/ \varPi} \bigg|\hat{\varww}_{\scriptscriptstyle \pm }  \bigg(  \frac { N (n-m \uprho (r))} {c}   \bigg) \bigg| \nd r .
\end{align*}
 Now the $r$-integral may be confined further to the interval defined by \eqref{6eq: |n-mr|} of length $ O (cT^{\vepsilon} / m N) $, and hence
 \begin{align}\label{6eq: bound of integral}
 	 \int_{-\varPi^{\vepsilon}/ \varPi}^{\varPi^{\vepsilon}/ \varPi} \big| \SO{}_{ \scriptscriptstyle \pm }^{\scriptscriptstyle \pm}  (r;   m, c ; N) \big|    \nd r \Lt T^{\vepsilon} N \Big(  1 + \frac {m} {\varPi}    \Big) \frac {c} {m N} = T^{\vepsilon}   \Big(\frac {c} {m} + \frac c {\varPi} \Big). 
 \end{align}
On inserting \eqref{6eq: bound of integral} into \eqref{7eq: O1(m), 2}, we have
\begin{align}\label{6eq: O(m, N)}
	\SO_{\pm}  (m ; N) \Lt T^{\vepsilon} N (  \varPi + m) . 
\end{align}
From \eqref{7eq: O1(m), 1} and \eqref{6eq: O(m, N)}, we conclude with the following bound for $\SO_{\pm}^{\delta} (m)$. 

\begin{lem}\label{lem: O1(m)}
	We have 
	\begin{align}\label{6eq: O1(m), final}
		 \SO_{\pm}^{\delta} (m) \Lt ( \varPi + m) T^{1/2+\vepsilon}. 
	\end{align}
\end{lem}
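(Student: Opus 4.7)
The plan is to combine the two key ingredients already established in \S\ref{sec: off-diagonal}: the dyadic reduction \eqref{7eq: O1(m), 1} with $N$ ranging over dyadic values $\Lt T^{1+\vepsilon}$, and the bound \eqref{6eq: O(m, N)} stating $\SO_{\pm}(m; N) \Lt T^{\vepsilon} N (\varPi + m)$ for each dyadic piece. Feeding the latter into the former, the contour integral $\int_{\vepsilon - i\log T}^{\vepsilon + i\log T} \frac{\nd v}{v}$ contributes at most $O(\log T) \subset O(T^{\vepsilon})$, yielding
\begin{equation*}
\SO_{\pm}^{\delta}(m) \Lt T^{\vepsilon} \max_{N \Lt T^{1+\vepsilon}} \frac{1}{\sqrt{N}} \cdot N (\varPi + m) \Lt (\varPi + m) T^{1/2 + \vepsilon},
\end{equation*}
which is exactly the claimed bound.

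There is no substantial obstacle at this stage: the entire argument has already been carried out earlier in \S \ref{sec: off-diagonal}. Namely, the Bessel-transform analysis of Corollary \ref{cor: u<T} truncates the Kloosterman modulus to $c \Lt mN/T$; the Mehler--Sonine-type representation of Lemma \ref{lem: Bessel} exposes the phase $e(mn \uprho(r)/c)$; the opening of the Kloosterman sum followed by the Poisson summation in $n$ via Lemma \ref{lem: Poisson}, combined with the rapid decay of the Fourier transform, produces the localization \eqref{6eq: |n-mr|} that simultaneously shortens the dual $n$-sum to $|n| \Lt m \varPi^{\vepsilon}/\varPi$ and the $r$-integral to a window of length $O(c T^{\vepsilon}/(mN))$. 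These are precisely what give the combined saving $c/m + c/\varPi$ in \eqref{6eq: bound of integral}, from which the factor $\varPi + m$ of \eqref{6eq: O(m, N)} emerges after summing trivially over $c$ (the range of $c$ is $c \Lt mN/T$, and the $1/c$ weight produces only a $\log T$ factor).

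The final assembly is thus a matter of bookkeeping: the $\sqrt{N}$ loss in $\sqrt{N}(\varPi+m)$ is balanced against the normalization $1/\sqrt{N}$ coming from the size of the Hecke coefficient factor in \eqref{7eq: O1(m), 1}, so that the maximum is attained at the largest dyadic range $N \asymp T^{1+\vepsilon}$, producing exactly the $T^{1/2+\vepsilon}$ factor. Importantly, the bound is linear in both $\varPi$ and $m$, with no cross term, which will be crucial for the main theorems since $\varPi$ is the spectral window width while $m$ is a small twisting parameter.
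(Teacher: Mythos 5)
Your proof is correct and follows exactly the paper's own argument: the lemma is precisely the conclusion drawn from combining \eqref{7eq: O1(m), 1} with \eqref{6eq: O(m, N)}, and your recapitulation of the intermediate steps (truncation of $c$ via Corollary \ref{cor: u<T}, Poisson summation in $n$, the localization \eqref{6eq: |n-mr|}, and the $r$-integral window) matches \S\ref{sec: off-diagonal} step for step. The final bookkeeping $\max_{N\Lt T^{1+\vepsilon}}\sqrt{N}\,(\varPi+m)\Lt(\varPi+m)T^{1/2+\vepsilon}$ is exactly what the paper implicitly performs.
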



\subsection{Conclusion}  

In view of \eqref{6eq: C+E=D+O}, Theorem \ref{thm: C1(m)} is a direct consequence of Lemmas \ref{lem: Eis E1,2}, \ref{lem: H+-(m)}  and \ref{lem: O1(m)}.

\section{The Twisted Second Moment} \label{sec: 2nd moment}

\subsection{Application of Kuznetsov Formula} 

First, in the bi-variable notation, by the approximate functional equations \eqref{3eq: AFE, 2} and \eqref{2eq: FE, Eis, 2} (here, we need to expand $2 \mathrm{c}_t ( \boldsymbol{m}  ) \mathrm{c}_t (  \boldsymbol{n}) = \mathrm{c}_t ( \boldsymbol{m} \boldsymbol{n}) + \mathrm{c}_t ( \boldsymbol{m} / \boldsymbol{n}) $ (see \eqref{3eq: ct(n)}) and use the symmetry in the   $\boldsymbol{n}$-sums),   we obtain 
\begin{align*}  
	 	{\SC}^{\delta }_{2 \, \natural}  ( \boldsymbol{m}) +  (1-\delta) {\SE}_{2}  (\boldsymbol{m})    =   
	  \sum_{\boldsymbol{n}}   \frac{ 1 } {  \sqrt{\| \boldsymbol{n} \|} }  \Bigg\{   \sum_{f \in \SB_{\delta} }  \omega_f   {  \lambda_f ( \boldsymbol{m}   )  \lambda_f (\boldsymbol{n}  )}     {V}_{2}^{\delta} ( \boldsymbol{m},    \boldsymbol{n}   ; t_f) & \\
		     + \frac {1-\delta} {\pi} \int_{-\infty}^{\infty}  \omega(t) \tau_{  it} ( \boldsymbol{m} ) \tau_{  it} (\boldsymbol{n})       {V}_{2}^{0}  (\boldsymbol{m},    \boldsymbol{n}  ; t)  \nd t & \Bigg\}  ,  
\end{align*} 
up to a negligibly small error, where 
\begin{align*}
  {V}_{2}^{\delta} (\boldsymbol{m}, \boldsymbol{n} ; t) =  2  V_2 (   \| \boldsymbol{n} \|  ; t) \mathrm{c}_t ( \boldsymbol{m} \boldsymbol{n}) 
 (  \upvarphi (t) +    \upvarphi (- t)  ). 
\end{align*}  
Note that $ {V}_{2} (\boldsymbol{m},  \boldsymbol{n} ; t)$ is even in $t$ and real-valued. 
By the Hecke relation \eqref{3eq: Hecke}, 
\begin{align*}  
	{\SC}^{\delta }_{2\, \natural}  (\boldsymbol{m}) + (1-\delta){\SE}_{2}  (\boldsymbol{m})  =  \sum_{\boldsymbol{d} \boldsymbol{h} = \boldsymbol{m} }   
  \sum_{  \boldsymbol{n}}  \frac{ 1 } {  \sqrt{  \| \boldsymbol{d n} \|} }  \Bigg\{   \sum_{f \in \SB_{\delta} } \omega_f    \lambda_f ( \boldsymbol{h n}  )   {V}_2^{\delta} (\boldsymbol{m}, \boldsymbol{d}   \boldsymbol{  n}  ; t_f) & \\
	 +    \frac {1-\delta} {\pi}   \int_{-\infty}^{\infty}   \omega(t)  \tau_{  it}    ( \boldsymbol{h n}  )    {V}_2^0 (\boldsymbol{m}, \boldsymbol{d}   \boldsymbol{  n}  ; t ) \nd t  &  \Bigg\}   .
\end{align*} 
Now an application of the Kuznetsov trace formula in \eqref{2eq: Kuznetsov} yields
\begin{align}\label{7eq: C+E=D+O}
	2 {\SC}^{\delta }_{2\, \natural}  ( \boldsymbol{m}) + 2(1-\delta) {\SE}_{2}  (\boldsymbol{m}) = \SD_2^{\delta } (\boldsymbol{m})  +  \SO_{+}^{\delta } (\boldsymbol{m}) + (-1)^{\delta} \SO_{-}^{\delta } (\boldsymbol{m}) + O \big(T^{-A}\big), 
\end{align}
with the diagonal 
\begin{align}\label{7eq: D2(m)}
	\SD_2^{\delta } (\boldsymbol{m}) = \sum_{\boldsymbol{d} \boldsymbol{h} = \boldsymbol{m} }    \sum_{  \boldsymbol{n}}  \frac{ 1  }  {   \sqrt{  \| \boldsymbol{d n} \|} }  \delta ({\boldsymbol{h n} })  {H}_2^{\delta }  (  \boldsymbol{d }, \boldsymbol{d}   \boldsymbol{ n}), 
\end{align} 
where
\begin{align}\label{7eq: H2(m)}
	  {H}_2^{\delta } (\boldsymbol{d},   \boldsymbol{  n} ) = \frac 4 {\pi^2 }  \int_{-\infty}^{\infty}  V_2^{\delta } (      \| \boldsymbol{n} \|  ; t)  \mathrm{c}_{2t} ( \boldsymbol{d} ) 
	    \upvarphi (t)   \tanh(\pi t) t \nd t ,
\end{align}
and the off-diagonal 
\begin{align}\label{7eq: O(m)}
	\begin{aligned}
		\SO_{\pm}^{\delta } (\boldsymbol{m}) = \sum_{\boldsymbol{d} \boldsymbol{h} = \boldsymbol{m} }    \sum_{  \boldsymbol{n}}  \frac{ 1  } {  \sqrt{  \| \boldsymbol{d n} \|} }     \sum_{c  } \frac{S( \boldsymbol{1}_{\pm}  \boldsymbol{h n}   ; c)} {c}   {H}_2^{\delta\, \pm} \bigg(\frac{4\pi\sqrt{\| \boldsymbol{h n} \|  }}{c};  \boldsymbol{m},  \boldsymbol{d}    \boldsymbol{n} \bigg) ,
	\end{aligned}
\end{align}
where  $\boldsymbol{1}_{\pm} = (1, \pm 1)$, 
\begin{align}\label{7eq: H+(x)}
	  {H}_2^{\delta\, +} ( x; \boldsymbol{m},   \boldsymbol{n} ) & = \frac {4 i} {\pi}   \int_{-\infty}^{\infty}    	 {V}_2^{\delta} (     \| \boldsymbol{n} \| ;  t)    \mathrm{c}_t (   \boldsymbol{m n})
	 (  \upvarphi (t) +    \upvarphi (- t)  ) J_{2it} (x)  \frac {t \nd t} {\cosh (\pi t) }, \\
\label{7eq: H-(x)}
{H}_2^{\delta\, -} ( x; \boldsymbol{m},   \boldsymbol{n} ) & = \frac {8} {\pi^2}   \int_{-\infty}^{\infty}    	 {V}_2^{\delta} (     \| \boldsymbol{n} \| ;  t)    \mathrm{c}_t (   \boldsymbol{m n})
(  \upvarphi (t) +    \upvarphi (- t)  ) K_{2it} (  x)   {\sinh (\pi t)} t   \nd t	 .
\end{align}

\subsection{Asymptotic for the Diagonal Sum}

Set  
\begin{align*}
  h = \mathrm{gcd}(\boldsymbol{h}), \qquad    \boldsymbol{h^{\star}} = \boldsymbol{h} / h, 
\end{align*}
then $  \delta ({\boldsymbol{h n} }) = 1 $  if and only if there is $n$ such that 
 $	\boldsymbol{n} = n   \widetilde{\boldsymbol{h^{\star} }}  .  $ 
Recall from \S \ref{sec: bi-varialbe} that $ \widetilde{\boldsymbol{h^{\star}}} $ is  dual to $\boldsymbol{h^{\star}}$. Therefore \eqref{7eq: D2(m)} is turned into 
\begin{align}\label{7eq: D2(m), 2.0}
	\SD_2^{\delta} (\boldsymbol{m}) = \sum_{\boldsymbol{d} \boldsymbol{h} = \boldsymbol{m} }    \sum_{   {n}}  \frac{ 1 }  { n \sqrt{\| \boldsymbol{d h^{\star} }  \|}   }     {H}_2^{\delta}  (  \boldsymbol{d }, n   \boldsymbol{d} \widetilde{ \boldsymbol{h^{\star}} }   ), 
\end{align} 
In view of \eqref{3eq: V(y;t), 1} and \eqref{7eq: H2(m)}, the summation  in \eqref{7eq: D2(m), 2.0} may be restricted effectively to $ n    \sqrt{\| \boldsymbol{m} \|} / h  \leqslant T^{1/2+\vepsilon}$. It follows from \eqref{3eq: V(y;t), 1} and  \eqref{5eq: phi} that $ H_2 (\boldsymbol{d}, \boldsymbol{n}) = O (\varPi T) $, so trivial estimation   yields 
\begin{align*}
\SD_2^{\delta}	(\boldsymbol{m}) = O \bigg(  \frac { \varPi T^{1+\vepsilon} } {\sqrt{\|\boldsymbol{m^{\star}}\|} } \bigg), 
\end{align*}
where $\boldsymbol{m^{\star}} = \boldsymbol{m} / \mathrm{gcd} (\boldsymbol{m})$. 
However, our aim here is to derive an asymptotic formula for $\SD_2^{\delta}	(\boldsymbol{m})$. 

\begin{lem}\label{lem: H2(m,n)}
	We have 
	\begin{align}
	 {H}_2^{\delta} (\boldsymbol{d},   \boldsymbol{  n} )  = O    \bigg(  \frac {\varPi T^{\vepsilon}}  { \| \boldsymbol{n} \|^{\vepsilon}}  \bigg), 
	\end{align}
if 
\begin{align}\label{7eq: h1/h2 - 1}
	 | \langle \boldsymbol{d} \rangle - 1  | >  \frac {\varPi^{\vepsilon}} {\varPi}, \qquad \text{{\rm(}$ \langle \boldsymbol{d} \rangle = d_1/d_2${\rm)}} . 
\end{align}
\end{lem}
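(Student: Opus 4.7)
The plan is to extract oscillation from the factor $\mathrm{c}_{2t}(\boldsymbol{d}) = \cos(2t\log\langle\boldsymbol{d}\rangle)$ in the defining integral \eqref{7eq: H2(m)} through repeated integration by parts in $t$, exploiting the hypothesis \eqref{7eq: h1/h2 - 1} which forces the phase derivative $2|\log\langle\boldsymbol{d}\rangle|$ to be at least of order $\varPi^{\vepsilon}/\varPi$.

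First, I would insert the Mellin representation \eqref{3eq: V(y;t), 2} for $V_2^{\delta}(\|\boldsymbol{n}\|; t)$ with $\theta = \vepsilon$ and $U = \log T$; the truncation error is negligible by virtue of the $\exp(-U^2/2)$ factor. On the contour $\mathrm{Re}(v) = \vepsilon$, the factor $\|\boldsymbol{n}\|^{-v}$ yields the sought weight $\|\boldsymbol{n}\|^{-\vepsilon}$. This reduces the lemma to bounding, uniformly in $v$ with $\mathrm{Re}(v) = \vepsilon$ and $|\mathrm{Im}(v)| \leqslant \log T$, the inner integral
\[ I(v; \boldsymbol{d}) = \int_{-\infty}^{\infty} \upgamma_2^{\delta}(v,t)\, \cos(2t\log\langle\boldsymbol{d}\rangle)\, \upvarphi(t)\, \tanh(\pi t)\, t\, \nd t. \]
By the Gaussian decay of $\upvarphi$, the integration may be confined to $|t-T| \leqslant \varPi \log T$ at a negligible cost.

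Next, write $\cos(2t\log\langle\boldsymbol{d}\rangle) = \tfrac{1}{2}\bigl(e^{2it\log\langle\boldsymbol{d}\rangle} + e^{-2it\log\langle\boldsymbol{d}\rangle}\bigr)$ and integrate by parts $N$ times against each linear exponential. Boundary contributions vanish thanks to the super-polynomial decay of $\upvarphi$, and no stationary point arises since the phase is linear in $t$. Each integration contributes a factor $(2|\log\langle\boldsymbol{d}\rangle|)^{-1} \leqslant \varPi/(2\varPi^{\vepsilon})$. The amplitude $A(t) = \upgamma_2^{\delta}(v,t)\upvarphi(t)\tanh(\pi t)\,t$ satisfies $|A(t)| \Lt T^{1+\vepsilon}$, and, by Lemma \ref{lem: G(v,t)} combined with $|\partial_t^n \upvarphi| \Lt (\log T/\varPi)^n$ on the truncated interval, we have $|A^{(n)}(t)| \Lt_n T^{1+\vepsilon}(\log T/\varPi)^n$ (the contribution $(\log T/T)^n$ from $\upgamma_2^{\delta}$-derivatives is dominated since $\varPi \leqslant T^{1-\vepsilon}$).

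Combining, after $N$ integrations by parts,
\[ |I(v;\boldsymbol{d})| \Lt \varPi\, T^{1+\vepsilon} \bigg(\frac{\log T}{\varPi^{\vepsilon}}\bigg)^{\!N}. \]
Since $\varPi \geqslant T^{\vepsilon}$ gives $\varPi^{\vepsilon} \geqslant T^{\vepsilon^2}$, choosing $N$ a sufficiently large constant in terms of $\vepsilon$ renders $I(v;\boldsymbol{d}) = O(T^{-A})$ for any prescribed $A$. Reassembling the outer $v$-integral (of length $O(\log T)$) then yields the bound $H_2^{\delta}(\boldsymbol{d}, \boldsymbol{n}) = O\bigl(T^{-A}/\|\boldsymbol{n}\|^{\vepsilon}\bigr)$, which is considerably stronger than claimed and in particular implies the stated estimate. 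The only technical point requiring care is the uniform derivative control of $\upgamma_2^{\delta}(v,t)$ along the truncated Mellin contour, which is precisely the content of Lemma \ref{lem: G(v,t)}.
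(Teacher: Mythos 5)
Your proof is correct, and it differs from the paper's in one interesting step. The paper first replaces $\upgamma_2^{\delta}(v,t)$ by its Stirling asymptotic $\uprho_2^{\delta}(v)\shskip t^{v}+O(T^{\vepsilon}/T)$; this replacement is precisely what generates the visible error $O(\varPi T^{\vepsilon}/\|\boldsymbol{n}\|^{\vepsilon})$ in the stated conclusion, and the clean main term with amplitude $t^{1+v}\upvarphi(t)$ is then killed by repeated integration by parts against the phase $2t\log\langle\boldsymbol{d}\rangle$. You instead keep the full amplitude $\upgamma_2^{\delta}(v,t)\upvarphi(t)\tanh(\pi t)\shskip t$, controlling its derivatives directly through \eqref{6eq: bound for G, 2} of Lemma \ref{lem: G(v,t)} together with the Gaussian derivative bounds for $\upvarphi$, and run the partial integration on the whole thing. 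This is legitimate: on $|t-T|\leqslant\varPi\log T$ the derivative losses $(\log T/\varPi)^{n}$ from $\upvarphi$ dominate those from $\upgamma_2^{\delta}$ since $\varPi\leqslant T^{1-\vepsilon}$, and each integration gains $\varPi/\varPi^{\vepsilon}$ from the phase lower bound furnished by \eqref{7eq: h1/h2 - 1}, so the net factor $\log T/\varPi^{\vepsilon}\leqslant\log T/T^{\vepsilon^2}$ decays and finitely many iterations suffice. The upshot is that your route bypasses the Stirling step entirely and delivers the stronger conclusion $H_2^{\delta}(\boldsymbol{d},\boldsymbol{n})=O(T^{-A}/\|\boldsymbol{n}\|^{\vepsilon})$; the error $O(\varPi T^{\vepsilon}/\|\boldsymbol{n}\|^{\vepsilon})$ stated in the lemma is thus not sharp, though of course adequate for the way the lemma is applied in \S\ref{sec: 2nd moment}. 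The paper's Stirling-first presentation is natural for parallelism with the evaluation of the diagonal main term immediately afterward (Lemma \ref{lem: main term}), where the same asymptotic for $\upgamma_2^{\delta}$ is genuinely needed, but for the purely oscillatory bound at hand your more direct argument is cleaner.
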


\begin{proof}
By \eqref{3eq: V(y;t), 2}, with $\theta = \vepsilon$ and $U = \log T$, we have 
	 \begin{align*}
	 {H}_2^{\delta} (\boldsymbol{d},   \boldsymbol{  n} )   = \frac {2} {\pi^3 i}  \int_{\vepsilon-i\log T}^{\vepsilon+i\log T}  \int_{T-\varPi \log T}^{T+\varPi \log T}      \frac { \upgamma_{2}^{\delta} (v, t)  \mathrm{c}_{2t} ( \boldsymbol{d}  )   {\upvarphi (t)} t } {  \| \boldsymbol{n} \|^v }    \frac {  \nd t \nd v} { v} + O \big(T^{-A} \big) . 
	 \end{align*}
 For $t , v$ on the integral domains, in view of \eqref{3eq: defn G2}, we have by the Stirling formula that 
 \begin{align*} 
 	\upgamma_{2}^{\delta} (v, t) = \uprho_{2}^{\delta} (v) \big( t^{  v} + O  (    {T^{\vepsilon} } / T  ) \big) ,  \qquad 
 	\uprho_{2}^{\delta} (v) =	 \exp (v^2   ) \pi^{-v}   \frac {\Gamma_{\mathbf{R}}^{\delta}  (1/2 + v  )^2  } {\Gamma_{\mathbf{R}}^{\delta}  (1/2    )^2 }  . 
 \end{align*}
Consequently,
\begin{align*}
	{H}_2^{\delta} (\boldsymbol{d},   \boldsymbol{  n} )   = \frac {2} {\pi^3 i}  \int_{\vepsilon-i\log T}^{\vepsilon+i\log T} \frac { \uprho_{2}^{\delta} (v)  } {  \| \boldsymbol{n} \|^v } \int_{T-\varPi \log T}^{T+\varPi \log T}        {   \mathrm{c}_{2t} ( \boldsymbol{d}  )   {\upvarphi (t)} t^{1+ v} }   \nd t  \frac {   \nd v} { v} + O \bigg(\frac {\varPi T^{\vepsilon}} { \| \boldsymbol{n} \|^{\vepsilon}} \bigg) . 
\end{align*}
The inner $t$-integral is a cosine Fourier integral of phase $ 2 t \log \, \langle \boldsymbol{d} \rangle $ (see \eqref{3eq: ct(n)}), so, by repeated partial integration, we infer that it is negligibly small since $ | \log \, \langle \boldsymbol{d} \rangle | \Gt \varPi^{\vepsilon} / \varPi $ by \eqref{7eq: h1/h2 - 1}.   
\end{proof}

In view of Lemma \ref{lem: H2(m,n)}, those terms with $ |\langle \boldsymbol{d} \rangle - 1| > \varPi^{\vepsilon}/ \varPi $ contribute at most $ O (\varPi T^{\vepsilon}  / \sqrt{\|\boldsymbol{m^{\star}}\| }) $, while the rest of the off-diagonal terms are very close to the diagonal 
and are in total $ O ( T^{1+\vepsilon}  / \sqrt{\|\boldsymbol{m^{\star}}\| }) $.

It is left to consider those diagonal terms with $ \delta (\boldsymbol{d}) = 1 $.  Note that in this case $ \boldsymbol{h^{\star}} = \boldsymbol{m^{\star}} $. 
More explicitly, \eqref{7eq: D2(m), 2.0} is turned into
\begin{align}\label{7eq: D2(m), 3.0}
	 \SD_{2}^{\delta} (\boldsymbol{m}) = \frac 1 {\sqrt{\| \boldsymbol{m^{\star} }  \|}} \sum_{d | \mathrm{gcd}(\boldsymbol{m})} \sum_{n}   \frac{ 1 }  { d n    }     {H}_2^{\delta}  (  d n  \sqrt{\| \boldsymbol{m^{\star} }  \|}   )  +   O \bigg(    \frac {T^{1+\vepsilon} } {\sqrt{\|\boldsymbol{m^{\star}}\|} } \bigg) ,  
\end{align} 
where, after the removal of $\tanh (\pi t)$, 
\begin{align}\label{7eq: H2(n)}
	H_2^{\delta} (  x ) = \frac 4 {\pi^2 }  \int_{-\infty}^{\infty}  V_2^{\delta} (   x^2 ; t) 
	\upvarphi (t)   t \nd t . 
\end{align} 
As in \eqref{1eq: m, r}, we set
\begin{align*}
	m = \mathrm{gcd}(\boldsymbol{m}), \qquad r =  \| \boldsymbol{m^{\star} }  \|. 
\end{align*}
It follows from \eqref{3eq: V(y;t)},   \eqref{7eq: D2(m), 3.0}, and \eqref{7eq: H2(n)} that 
\begin{align}\label{7eq: D2(m), 1}
	\SD_{2}^{\delta} (\boldsymbol{m}) = \frac {2} {\pi^3 i} \sum_{d | m} \int \upvarphi (t) t \int_{(3)} \upgamma_2^{\delta} (v, t) \frac {\zeta (1+ 2v)} {(d \sqrt{r})^{1+2v} } \frac {\nd v \nd t} {v}    +   O \bigg(    \frac {T^{1+\vepsilon} } {\sqrt{r} } \bigg) .   
\end{align}  
Now we shift the integral contour from $\mathrm{Re}(v) = 3$ down to $\mathrm{Re} (v) = \vepsilon -1/2$ and collect the residue at the double pole $v = 0$. In view of  \eqref{3eq: defn G2} and \eqref{3eq: bound for G, 1}, the resulting integral has bound  $O ((|t|+1)^{\vepsilon-1/2}  )$ and it yields the error $O (\varPi T^{1/2+\vepsilon})$, while we have Laurent expansions (at $v = 0$): 
\begin{align*}
	\upgamma_2^{\delta} (v, t) =  1 + (\upsigma_2^{\delta} (t) - 4\log \pi)   \frac {v} 2 + \cdots, \qquad   
	\zeta (1+2v) = \frac 1 {2 v} + \gamma + \cdots ,
\end{align*}
\begin{align*}
	 \upsigma_2^{\delta} (t) = 2 \psi ( \vkappa )   +   \psi   ( \vkappa + it  )  + \psi   ( \vkappa - it  ) , \qquad  \vkappa = \frac {1+2 \delta} 4 ,  
\end{align*}
where $\gamma$ is the Euler constant and $\psi (s) = \Gamma'(s)/\Gamma (s)$ is the di-gamma function, and it follows that the residue is equal to 
\begin{align*}
\frac 1 {d \sqrt{r}}\bigg(	\gamma + \frac 1 4 \upsigma_2^{\delta} (t) -   \log (\pi  d \sqrt{r}) \bigg). 
\end{align*}
By the Stirling formula,
\begin{align*}
	\upsigma_2^{\delta} (t) = 2 \psi  (  \vkappa ) + 2 \log \sqrt{ \vkappa^2  + t^2 } + O \bigg(\frac 1 {1+t^2} \bigg). 
\end{align*}
Next, the $t$-integrals may be evaluated easily by 
\begin{align*}
	\int_{-\infty}^{\infty}   {\upvarphi (t)} t \nd t = {  \sqrt{\pi}}  \varPi T , \qquad \int_{-\infty}^{\infty}   {\upvarphi (t)} t \log \sqrt{\vkappa^2+ t^2}  \nd t = {  \sqrt{\pi}}  \varPi T \log T   + O \bigg(\frac {\varPi^3} T  \bigg) .
\end{align*}
It is clear that  the error after this process is $O (\varPi^3T^{\vepsilon}/ T {\displaystyle \sqrt{r     }})$, while the main term reads 
\begin{align*}
	\frac {4} {\pi\sqrt{\pi r}} \varPi T \sum_{d |m}  \bigg(  \gamma +  \frac 1 2 \psi (\vkappa)  +   \log \frac {\sqrt{T}} {\pi d  \sqrt{r}}\bigg)  . 
\end{align*} 
Moreover, by the Gauss  di-gamma formula in \cite[\S 1.2]{MO-Formulas},
\begin{align*}
	\psi \bigg(\frac 1 4 \bigg) = - \gamma - 3\log 2 - \frac {\pi} 2, \qquad  \psi \bigg(\frac 3 4 \bigg) = - \gamma - 3\log 2 + \frac {\pi} 2. 
\end{align*}
Finally, we conclude with the following asymptotic formula for $\SD_{2}^{\delta} (m_1, m_2)  $.

\begin{lem}\label{lem: main term}
 Set \begin{align}
	 m = (m_1, m_2), \qquad r      = \frac {m_1m_2}  {m^2}. 
\end{align} 
Define  
\begin{align}\label{7eq: gamma0}
	\gamma_{0 }  =    \gamma   - \log 8 \pi^2 - \frac {\pi} 2 ,  \qquad \gamma_1 = \gamma   - \log 8 \pi^2 + \frac {\pi} 2, 
\end{align}  
\begin{align}\label{7eq: Sigma}
	\varSigma   ( m    ) =	 {\sum_{d  | m     }  }  
	\frac {1} {d }, \qquad  \breve{\varSigma}   ( m  ) =	 {\sum_{d  | m      }  }  
	\frac {\log d } {d } . 
\end{align}  
Then 
	 \begin{align}\label{7eq: D2, final}
	 \begin{aligned}
	 		 \SD_{2}^{\delta} (m_1, m_2) = \frac { 2  } {\pi \sqrt{\pi r} } \varPi T \bigg(  \bigg( & \log  {\frac T r}   + \gamma_{\delta}   \bigg) \varSigma (m)  - 2 \breve{\varSigma} (m)  \bigg)   \\
	 	   & + O \bigg(  T^{\vepsilon} \bigg(  \varPi \sqrt{T} +  \frac { T  } {\sqrt{ r      } } +   \frac {  \varPi^3  } {T \sqrt{ r      } }  \bigg) \bigg)   .
	 \end{aligned}
	 \end{align}

\end{lem}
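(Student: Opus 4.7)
Starting from the integral representation \eqref{7eq: D2(m), 1}, I would shift the $v$-contour from $\mathrm{Re}(v)=3$ down to $\mathrm{Re}(v)=\vepsilon-1/2$ and collect the residue at $v=0$. This point is a double pole: the simple pole of $\zeta(1+2v)$ at $v=0$ combines with the factor $1/v$ in the integrand. The integral on the shifted line I would bound by combining the Stirling-type estimate $\upgamma_{2}^{\delta}(v,t) \Lt (|t|+1)^{2\mathrm{Re}(v/2)}$ from Lemma \ref{lem: G(v,t)} with the standard polynomial growth of $\zeta(1+2v)$ on vertical lines and with the Gaussian decay of $\upvarphi(t)$ on $|t-T| \Lt \varPi \log T$; this yields an error of acceptable size $O(\varPi T^{1/2+\vepsilon})$.

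Next I would compute the residue at $v=0$ via the Laurent expansions
\[ \upgamma_{2}^{\delta}(v,t) = 1 + (\upsigma_{2}^{\delta}(t) - 4\log\pi)\,v/2 + \cdots, \qquad \zeta(1+2v) = \tfrac{1}{2v} + \gamma + \cdots, \]
where $\upsigma_{2}^{\delta}(t) = 2\psi(\vkappa) + \psi(\vkappa+it) + \psi(\vkappa-it)$ with $\vkappa = (1+2\delta)/4$, extracting the residue as
\[ \frac{1}{d\sqrt{r}}\Bigl(\gamma + \tfrac{1}{4}\upsigma_{2}^{\delta}(t) - \log(\pi d\sqrt{r})\Bigr). \]
Invoking Stirling in the form $\upsigma_{2}^{\delta}(t) = 2\psi(\vkappa) + 2\log\sqrt{\vkappa^{2}+t^{2}} + O(1/(1+t^{2}))$, I would reduce to two Gaussian $t$-integrals: the elementary $\int \upvarphi(t)\,t\,\nd t = \sqrt{\pi}\,\varPi T$, and the slightly more delicate $\int \upvarphi(t)\,t\,\log\sqrt{\vkappa^{2}+t^{2}}\,\nd t = \sqrt{\pi}\,\varPi T\log T + O(\varPi^{3}/T)$, the latter obtained by Taylor-expanding $\log\sqrt{\vkappa^{2}+t^{2}}$ about $t=T$ on the effective support $|t-T| \Lt \varPi\log T$.

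Finally, I would invoke the Gauss digamma values $\psi(1/4) = -\gamma - 3\log 2 - \pi/2$ and $\psi(3/4) = -\gamma - 3\log 2 + \pi/2$ to collapse the constant $\gamma + \tfrac{1}{2}\psi(\vkappa)$ into the $\gamma_{\delta}$ of \eqref{7eq: gamma0}. Splitting $\log(\sqrt{T}/(\pi d\sqrt{r})) = \tfrac{1}{2}\log(T/r) - \log(\pi d)$ and summing over $d \mid m$ produces exactly the divisor sums $\varSigma(m) = \sum_{d|m}1/d$ and $\breve{\varSigma}(m) = \sum_{d|m}(\log d)/d$, giving the main term $(\log(T/r) + \gamma_{\delta})\varSigma(m) - 2\breve{\varSigma}(m)$. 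The main technical point is keeping careful track of the three sources of error: the $\varPi\sqrt{T}$ term from the contour shift, the $T/\sqrt{r}$ term already present in \eqref{7eq: D2(m), 1} (stemming from trivial estimation of the off-diagonal contributions with $\langle\boldsymbol{d}\rangle \neq 1$ via Lemma \ref{lem: H2(m,n)}), and the $\varPi^{3}/(T\sqrt{r})$ term from the Stirling remainder in the $t$-integral, each of which appears in the claimed error bound.
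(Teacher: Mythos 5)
Your proposal follows the paper's own proof essentially step for step: contour shift from $\mathrm{Re}(v)=3$ to $\mathrm{Re}(v)=\vepsilon-1/2$ in \eqref{7eq: D2(m), 1} with the Stirling bound from Lemma \ref{lem: G(v,t)} controlling the shifted line, residue extraction at the double pole via the same Laurent expansions, the same Gaussian $t$-integrals and Stirling remainder, the Gauss digamma values to assemble $\gamma_\delta$, and the same bookkeeping of the three error sources. This matches the paper's argument.
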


\subsection{Estimation of the Off-diagonal Sums}  \label{sec: off}


Similar to the initial arguments for $ \SO_{\pm}^{\delta} (m)$  in \S \ref{sec: off-diagonal}, by applying Lemma   \ref{lem: AFE}, Corollary \ref{cor: u<T}, and Remark \ref{rem: gamma (t)} to \eqref{7eq: O(m)}, \eqref{7eq: H+(x)}, and \eqref{7eq: H-(x)}, we have 
\begin{align}\label{7eq: O2(m)}
	{\SO}_{\pm}^{\delta } (\boldsymbol{m} ) \Lt T^{\vepsilon}  \max_{\| \boldsymbol{N} \| \Lt T^{1+\vepsilon} }  \frac {1} {\sqrt{\| \boldsymbol{N} \|}}   \sum_{\boldsymbol{d} \boldsymbol{h} = \boldsymbol{m} }   \int_{\vepsilon-i\log T}^{\vepsilon+i\log T}   \big|{\SO}_{\pm}^{\delta } (\boldsymbol{d}, \boldsymbol{h} ; \boldsymbol{N};  v) \big|   \frac {\nd v} {v} , 
\end{align}
where  $\boldsymbol{N} $ are dyadic, 
and   
\begin{align}
\begin{aligned}
	{\SO}_{\pm}  (\boldsymbol{d}, \boldsymbol{h} ; \boldsymbol{N} ) =    \sum_{  c \shskip \Lt C  }     \sum_{ \boldsymbol{n} \sim \boldsymbol{N} / \boldsymbol{d} }  
  \frac  {S( \boldsymbol{1}_{\pm}  \boldsymbol{h n}   ; c)}  {c}  \varww \bigg(    \frac {\boldsymbol{d}  \boldsymbol{n}} {\boldsymbol{N}}   \bigg)    {H}_2^{   \pm}  \bigg(    \frac{4\pi\sqrt{\| \boldsymbol{h n} \|  }}{c}; \langle \boldsymbol{d  } \rangle  \sqrt{\langle \boldsymbol{h n} \rangle }    \bigg)   , 
\end{aligned}
\end{align} 
in which 
\begin{align}
	C = \frac  { \max  \{    \boldsymbol{  m N }   \} } {T \|\boldsymbol{d}\| } , 
\end{align}  
the weight $\varww (\boldsymbol{x})$ is of the product form as in \eqref{3eq: w(x)} for the weight function $\varww   \in C_c^{\infty} [1, 2]$ as in \S \ref{sec: off-diagonal}, and  $H_{2}^{   \pm} (x, y ) $ are the Bessel transforms of 
\begin{align*}
	h_{2}  (t; y ) = 2 \upgamma_{2}^{\delta} (v, t)  \cos (2t \log y) (\upvarphi (t) +  \upvarphi (- t)); 
\end{align*}   
here $\delta$ and $v$ have been suppressed again from the notation.

By the formula of the Bessel integrals $H^{   \pm} (x, y ) $   as in  Lemma \ref{lem: Bessel}, along with \eqref{4eq: H2=H},  it follows that ${\SO}_{\pm}  (\boldsymbol{d}, \boldsymbol{h} ; \boldsymbol{N} ) $ splits into four similar parts, one of which is  
\begin{align}\label{8eq: O2++}
 \SO^{_{++}}_{\scriptscriptstyle \pm} (\boldsymbol{d}, \boldsymbol{h} ; \boldsymbol{N} ) \Lt 
\varPi T^{1+\vepsilon}  \sum_{  c \shskip \Lt C  } \frac 1 c \int_{-\varPi^{\vepsilon}/ \varPi}^{\varPi^{\vepsilon}/ \varPi}   \big|
 \SO^{_{++}}_{\scriptscriptstyle \pm} (r; c; \boldsymbol{d}, \boldsymbol{h} ; \boldsymbol{N} )  \big|    \nd r  , 
\end{align}
with  
\begin{align}\label{8eq: O++}
\SO^{_{++}}_{\scriptscriptstyle \pm} (r; c; \boldsymbol{d}, \boldsymbol{h} ; \boldsymbol{N} ) =     \sum_{ \boldsymbol{n} }     {T_{\pm} ( \boldsymbol{h n} ; \boldsymbol{d}   ; c)} e ( \uprho_{\scriptscriptstyle \pm} (r; \boldsymbol{h n} , \boldsymbol{d} ) )  \varww \bigg(    \frac {\boldsymbol{d} \boldsymbol{n}} {\boldsymbol{N}}   \bigg) , 
\end{align}
where
\begin{align}\label{8eq: defn T}
	T_{\pm} ( \boldsymbol{ n} ; \boldsymbol{d}   ; c) = S(n_1, \pm n_2; c ) e \bigg(\frac {d_1 n_1} {c d_2  } \pm \frac {d_2 n_2} {c d_1  } \bigg), 
\end{align}
\begin{align}\label{8eq: rho}
	\uprho_{\scriptscriptstyle \pm} (r; \boldsymbol{ n} , \boldsymbol{d} ; c) = \frac {d_1 n_1} {c d_2} \uprho_1 (r) \pm \frac {d_2 n_2} {c d_1} \uprho_2 (r), \qquad \uprho_{1,2} (r) = \exp (\pm r) - 1. 
\end{align}

Now let us assume $ m_1 N_1 \geqslant m_2 N_2 $, so that 
\begin{align}\label{7eq: C}
	C = \frac {m_1 N_1} {d_1 d_2 T}. 
\end{align}
Our strategy is to apply Poisson summation to the $n_1$-sum but trivial estimation to the $n_2$-sum. Note that in the worst case scenario the $n_2$-sum could have bounded length. More explicitly, after opening the Kloosterman sum $S (n_1, \pm n_2 ; c)$ as in \eqref{8eq: defn T} into an $\valpha$-sum modulo $c$  (see \eqref{3eq: Kloosterman}), the $\valpha$- and $n_1$-sums read
\begin{align*} 
\sumx_{   \valpha      (\mathrm{mod} \, c) } e \lp \pm \frac {\widebar{\valpha} h_2 n_2} {c}  \rp		\sum_{n_1 \sim N_1 / d_1 }    e \bigg( \frac {\valpha h_1 n_1} {c} + \frac {m_1   n_1} {c d_2  } \bigg) e \bigg(   \frac {m_1   n_1} {c d_2  } \uprho_1 (r)  \bigg) \varww \bigg(\frac {d_1  n_1} {N_1} \bigg) .  
	\end{align*}   
By applying the Poisson summation formula modulo $c d_2$ as in Lemma \ref{lem: Poisson}, this is turned into 
\begin{align*}
\frac {N_1} {d_1} \,	\sumx_{   \valpha      (\mathrm{mod} \, c) } e \lp \pm \frac {\widebar{\valpha} h_2 n_2} {c}  \rp		\sum_{n_1 \equiv - \valpha d_2 h_1 - m_1 (\mathrm{mod}\, c d_2)}  \hat{\varww} \bigg( \frac {N_1 (n_1 - m_1 \uprho_{1} (r))} {c d_1 d_2} \bigg). 
\end{align*}
Note that the Fourier transform  is negligibly small unless
\begin{align}\label{7eq: |n-mr|}
	| n_1-m_1 \uprho_1 (r) | < \frac {c d_1 d_2 T^{\vepsilon}} {N_1} .  
\end{align}
In view of \eqref{8eq: O2++} and \eqref{7eq: C}, 
\begin{align*}
	\frac {c d_1 d_2 T^{\vepsilon}} {N_1} \Lt \frac {m_1 T^{\vepsilon}} {T}, \qquad m_1 \uprho_{1} (r) \Lt \frac {m_1 \varPi^{\vepsilon}} {\varPi}, 
\end{align*}
so the $n_1$-sum may be restricted effectively to the range $|n_1| \Lt m_1 \varPi^{\vepsilon}/\varPi $. For each such $n_1$, the congruence equation
\begin{align*}
	n_1 \equiv - \valpha d_2 h_1 - m_1 (\mathrm{mod}\, c  ), 
\end{align*}
has at most $ (c, d_2 h_1) $ many solutions of $\valpha$ modulo $c$. 
Moreover, a trivial estimation of the $n_2$-sum yields a factor $O (N_2/d_2)$. Consequently, up to a negligible error, 
\begin{align}\label{7eq: O++}
  \big|
	\SO^{_{++}}_{\scriptscriptstyle \pm} (r; c; \boldsymbol{d}, \boldsymbol{h} ; \boldsymbol{N} )  \big|    \Lt N_1 N_2\frac {(c, d_2 h_1)} {d_1 d_2}    {\sum_{|n_1| \Lt m_1 \varPi^{\vepsilon}/\varPi}  } 
	  \bigg| \hat{\varww} \bigg( \frac {N_1 (n_1 - m_1 \uprho_{1} (r))} {c d_1 d_2} \bigg) \bigg|  . 
\end{align}
Next, we insert this into \eqref{8eq: O2++}, change the order of $n_1$-sum and $r$-integral,  and bound the $r$-integral by $O (cd_1 d_2 T^{\vepsilon} / m_1 N_1)$, which is the length of the interval defined by \eqref{7eq: |n-mr|}. At any rate, we have
\begin{align*}
	\SO^{_{++}}_{\scriptscriptstyle \pm} (\boldsymbol{d}, \boldsymbol{h} ; \boldsymbol{N} ) & \Lt T^{\vepsilon} \cdot \varPi T \cdot N_1 N_2 \frac {d_1 d_2} {m_1 N_1}  \sum_{c \Lt {m_1 N_1} / {d_1 d_2 T}}  \frac {(c, d_2 h_1)} {d_1 d_2} \Big(  1+ \frac {m_1} {\varPi}  \Big) \\
	& \Lt \tau (m_1 m_2) T^{\vepsilon} \frac {(\varPi + m_1) N_1 N_2} {d_1 d_2}. 
\end{align*}
Of course, the bound in the case $ m_1 N_1 < m_2 N_2 $ is similar, and hence 
\begin{align}\label{7eq: O(d,h;N)}
	 \SO_{\pm}  (\boldsymbol{d}, \boldsymbol{h} ; \boldsymbol{N} ) \Lt \tau (\boldsymbol{m})  T^{\vepsilon} \frac { (\varPi + \max \{\boldsymbol{m}\} )\|\boldsymbol{N}\|} {\|\boldsymbol{d}\|}. 
\end{align}
Thus by \eqref{7eq: O2(m)} and \eqref{7eq: O(d,h;N)}, we conclude with the following estimate for $ {\SO}_{\pm}^{\delta } ( \boldsymbol{m} )  $.

\begin{lem}\label{lem: O2(m)}
	We have 
	\begin{align}
		{\SO}_{\pm}^{\delta } ( \boldsymbol{m} ) \Lt  \tau (\boldsymbol{m})^2   {(\varPi + \max \{\boldsymbol{m} \})   T^{1/2+\vepsilon}} . 
	\end{align}
\end{lem}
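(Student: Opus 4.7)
The plan is to assemble the stated bound directly from the two inequalities \eqref{7eq: O2(m)} and \eqref{7eq: O(d,h;N)} already established in \S\ref{sec: off}. Concretely, I would substitute \eqref{7eq: O(d,h;N)} into the right-hand side of \eqref{7eq: O2(m)} and verify that every remaining factor is either negligible or can be absorbed into the claimed $\tau(\boldsymbol{m})^2 (\varPi+\max\{\boldsymbol{m}\}) T^{1/2+\vepsilon}$.

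The bookkeeping has three pieces. First, the inner $v$-integral runs along the segment $\vepsilon - i\log T \leqslant \mathrm{Im}(v) \leqslant \vepsilon + i\log T$, on which $|v| \geqslant \vepsilon$; hence it contributes merely a factor $O(\log T) \Lt T^{\vepsilon}$. Second, since $\boldsymbol{N}$ is dyadic with $\|\boldsymbol{N}\| \Lt T^{1+\vepsilon}$, the combination $\|\boldsymbol{N}\|/\sqrt{\|\boldsymbol{N}\|} = \sqrt{\|\boldsymbol{N}\|}$ is controlled by $T^{1/2+\vepsilon}$. Third, the divisor sum over factorizations $\boldsymbol{d}\boldsymbol{h} = \boldsymbol{m}$ satisfies
\[
\sum_{\boldsymbol{d}\boldsymbol{h} = \boldsymbol{m}} \frac{1}{\|\boldsymbol{d}\|} = \Bigl(\sum_{d_1 | m_1} \frac{1}{d_1}\Bigr)\Bigl(\sum_{d_2 | m_2} \frac{1}{d_2}\Bigr) \leqslant \tau(m_1)\tau(m_2) = \tau(\boldsymbol{m}).
\]
Multiplying this with the $\tau(\boldsymbol{m})$ factor from \eqref{7eq: O(d,h;N)} (which came from the Weil bound on the Kloosterman sum and the divisor bound in the estimation of $\SO_\pm(\boldsymbol{d},\boldsymbol{h};\boldsymbol{N})$) produces the $\tau(\boldsymbol{m})^2$ in the lemma, and the remaining factor $(\varPi+\max\{\boldsymbol{m}\})$ is already present in \eqref{7eq: O(d,h;N)}.

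There is no genuine technical obstacle at this stage: the substantive work has already been done in deriving \eqref{7eq: O(d,h;N)} (the Poisson summation in $n_1$, trivial estimation in $n_2$, and Weil's bound), and this lemma is a clean accounting step. The only subtlety worth a sanity check is that the two cases $m_1 N_1 \geqslant m_2 N_2$ and $m_1 N_1 < m_2 N_2$ lead to the same shape of bound by symmetry, so \eqref{7eq: O(d,h;N)} applies uniformly in $\boldsymbol{d}, \boldsymbol{h}, \boldsymbol{N}$. Putting all of this together in a single line of arithmetic gives the lemma.
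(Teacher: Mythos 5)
Your proposal is correct and is exactly the paper's argument: the paper concludes the lemma "by \eqref{7eq: O2(m)} and \eqref{7eq: O(d,h;N)}," and you have spelled out the three bookkeeping steps (the $v$-integral is $\Lt T^{\vepsilon}$, $\|\boldsymbol{N}\|/\sqrt{\|\boldsymbol{N}\|} \Lt T^{1/2+\vepsilon}$ for $\|\boldsymbol{N}\|\Lt T^{1+\vepsilon}$, and $\sum_{\boldsymbol{dh}=\boldsymbol{m}}1/\|\boldsymbol{d}\|\leqslant\tau(\boldsymbol{m})$) that the paper leaves implicit. Nothing is missing and the route is the same.
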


\subsection{Conclusion} In view of \eqref{7eq: C+E=D+O}, Theorem \ref{thm: C2(m)}   is a direct consequence of Lemmas 
\ref{lem: Eis E1,2}, \ref{lem: main term}, and \ref{lem: O2(m)}.

\section{The Mollified Moments} \label{sec: mollified}

Historically, the mollifier method was first used  on zeros of the Riemann zeta  function by Bohr, Landau, and Selberg \cite{Bohr-Landau,Selberg-Mollifier}. In this section, we shall mainly follow the ideas and maneuvers in \cite{IS-Siegel,KM-Analytic-Rank-2}.

\subsection{Setup} 

We introduce the mollifier  
\begin{align}\label{9eq: defn Mj}
	M_f	= \sum_{m \leqslant M}  \frac {x_m     \lambda_f  (m)} {m^{1/2+it_f}}  . 
\end{align} 
Assume that the  coefficients $ x_m  $ 
are bounded, real-valued, normalized so that 
\begin{align}
	\label{9eq: x1=1}
	x_1 = 1, 
\end{align}
supported on square-free $m $ with  $m \leqslant M$. 
Note that
\begin{align}\label{9eq: |Mj|2}
	|M_f|^2 = \mathrm{Re} \mathop{\sum \sum}_{m_1, m_2 \leqslant M}  \frac {x_{m_1} x_{m_2}     \lambda_f (m_1) \lambda_f {(m_2)}  } {\sqrt{m_1 m_2} (m_1/m_2)^{it_f} } . 
\end{align}
For $\upvarphi (t)$ given in \eqref{5eq: phi}, consider the mollified moments
\begin{align}\label{9eq: defn M1}
	\SM_1^{\delta} =  \sum_{f \in \SB_{\delta}}   \omega_f  M_f   L (s_f,    f     )  \upvarphi (t_f), 
\end{align} 
\begin{align}\label{9eq: defn M2}
	\SM_2^{\delta} =  \sum_{f \in \SB_{\delta} }   \omega_f  |M_f|^2     |L (s_f,    f     )|^2    \upvarphi (t_f). 
\end{align} 
By the Cauchy inequality, 
\begin{align}\label{9eq: Cauchy} 
		\sumd_{ L (s_f,   f    ) \neq 0} \omega_f \upvarphi (t_f)   \geqslant \frac { \lp \,  {\displaystyle \sumd }    \omega_f  M_f L (s_f,   f    )  \upvarphi (t_f)   \rp^2  }  { {\displaystyle \sumd }    \omega_f \left|M_f  L (s_f,   f    ) \right|^2 \upvarphi (t_f)  } ,
\end{align}
where, of course,   the $\delta$   indicates summation over $\SB_{\delta}$. 
Our task is to calculate the mollified moments $  \SM_1^{\delta}$ and $\SM_2^{\delta} $ and then carefully choose the coefficients $x_m$ to maximize the ratio $\SM_1^{\delta \, 2} / \SM_2^{\delta} $.

\subsection{The Mollified First Moment}

By the definitions in \eqref{2eq: moments M1}, \eqref{9eq: defn Mj}, and \eqref{9eq: defn M1}, we have
\begin{align*}
	\SM_1^{\delta} = \sum_{m \leqslant M} \frac {x_m  } {\sqrt{m}} \SC_1^{\delta} (m). 
\end{align*}
Thus if we insert the  asymptotic formula of $\SC_1^{\delta} (m)$ in Theorem \ref{thm: C1(m)}, then  we obtain the main term $\varPi T /\pi \sqrt{\pi}$, while, as $x_m$ are bounded, we estimate the error term by 
\begin{align*}
	\varPi T^{1/2+\vepsilon} \sum_{m \leqslant M}  \frac {1} {\sqrt{m}} + T^{1/2+\vepsilon} \sum_{m \leqslant M}     {\sqrt{m}} \Lt T^{\vepsilon} \sqrt{MT} \big(\varPi  + M \big) . 
\end{align*}

\begin{lem}\label{lem: M1}
Let $\varPi = T^{ \vnu }$    and  $M = T^{\varDelta}$. Then for   $\varDelta <   (2\vnu + 1)/3$ there exists $\vepsilon = \vepsilon (\vnu, \varDelta) > 0$ such that 
\begin{align}\label{9eq: M1}
	\SM_1^{\delta} = \frac { \varPi T} {\pi \sqrt{\pi}} + O \big(\varPi T^{1-\vepsilon} \big). 
\end{align}
\end{lem}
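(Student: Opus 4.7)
\medskip

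The plan is straightforward: expand the mollifier, interchange summation, apply Theorem~\ref{thm: C1(m)} term-by-term, and then balance the two contributions to the error.

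First I would write
\begin{align*}
	\SM_1^{\delta} = \sum_{m \leqslant M} \frac{x_m}{\sqrt{m}} \SC_1^{\delta}(m)
\end{align*}
by inserting the definition \eqref{9eq: defn Mj} of $M_f$ into \eqref{9eq: defn M1}, using $\omega_f \lambda_f(m) m^{-it_f} L(s_f, f) \upvarphi(t_f)$ as the summand and recognizing this as $\SC_1^{\delta}(m)$ from \eqref{2eq: moments M1}. Applying the asymptotic \eqref{2eq: C1(m)} of Theorem~\ref{thm: C1(m)}, the Kronecker $\delta(m,1)$ together with the normalization $x_1 = 1$ from \eqref{9eq: x1=1} picks out exactly one main term of size $\varPi T/\pi \sqrt{\pi}$.

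Next I would estimate the error. Since the $x_m$ are bounded,
\begin{align*}
	\sum_{m \leqslant M} \frac{|x_m|}{\sqrt{m}} (\varPi + m) T^{1/2+\vepsilon} \ll T^{1/2+\vepsilon} \bigl( \varPi \sqrt{M} + M^{3/2} \bigr) = T^{\vepsilon} \sqrt{MT}\, (\varPi + M) .
\end{align*}
Substituting $\varPi = T^{\vnu}$ and $M = T^{\varDelta}$, the two pieces become $T^{\vnu + (1+\varDelta)/2 + \vepsilon}$ and $T^{(1+3\varDelta)/2 + \vepsilon}$. The first is dominated by $\varPi T^{1-\vepsilon} = T^{\vnu + 1 - \vepsilon}$ provided $\varDelta < 1$, which is automatic, while the second is dominated by $\varPi T^{1-\vepsilon}$ exactly when $(1 + 3\varDelta)/2 < \vnu + 1$, i.e.\ $\varDelta < (2\vnu + 1)/3$. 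Under this hypothesis, one can choose $\vepsilon = \vepsilon(\vnu, \varDelta) > 0$ small enough that both error contributions are $O(\varPi T^{1-\vepsilon})$, yielding \eqref{9eq: M1}.

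There is no real obstacle here: the lemma is essentially a bookkeeping consequence of Theorem~\ref{thm: C1(m)}, with the only content being the balancing of $\varPi$ against $m$ in the error $(\varPi + m) T^{1/2+\vepsilon}$. The $m$-term, weighted by $m^{-1/2}$ and summed to $M$, produces $M^{3/2}$, and it is this term (rather than the $\varPi$-term) that dictates the threshold $\varDelta < (2\vnu + 1)/3$. One should note that the proof does not require any hypothesis on the $x_m$ beyond boundedness, $x_1 = 1$, and support in $[1, M]$; in particular the square-freeness assumption on the support plays no role at this stage and is only needed later for the second-moment calculation.
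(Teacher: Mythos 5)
Your proposal follows exactly the paper's proof: expand $\SM_1^{\delta}$ as $\sum_{m\leqslant M} x_m m^{-1/2}\,\SC_1^{\delta}(m)$, insert the asymptotic from Theorem~\ref{thm: C1(m)}, extract the main term from $m=1$, and bound the error by $T^{\vepsilon}\sqrt{MT}(\varPi+M)$, which yields the threshold $\varDelta<(2\vnu+1)/3$. The power-counting and identification of the dominant $M^{3/2}T^{1/2}$ term are both correct, and your closing remark about which hypotheses on $x_m$ are actually used is accurate.
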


\subsection{The Mollified Second Moment}

In view of the definitions in \eqref{2eq: moments M2}, \eqref{9eq: |Mj|2}, and \eqref{9eq: defn M2}, we have
\begin{align*}
	\SM_2^{\delta} =  \mathop{\sum \sum}_{m_1, m_2 \leqslant M}  \frac {x_{m_1} x_{m_2}     } {\sqrt{m_1 m_2}   }  \SC_2^{\delta} (m_1, m_2) .
\end{align*}
Now we insert the asymptotic formula of $  \SC_2^{\delta} (m_1, m_2)$ in Theorem \ref{thm: C2(m)} and  bound the error term by the sum of
\begin{align*}
    T^{1/2+\vepsilon}    \mathop{\sum \sum}_{m_1, m_2 \leqslant M}  \frac { \varPi + m_1 + m_2  } {\sqrt{m_1 m_2}   }     \Lt  T^{\vepsilon} M \sqrt{T} (\varPi + M),  
\end{align*} 
and
\begin{align*}
	  T^{\vepsilon} \bigg(T + \frac {\varPi^3} {T} \bigg) \mathop{\sum \sum}_{m_1, m_2 \leqslant M}  \frac {   (m_1, m_2)  } { {m_1 m_2}   }   \Lt  T^{\vepsilon} \bigg(T + \frac {\varPi^3} {T} \bigg) . 
\end{align*} 
Then we arrive at the asymptotic formula for $\SM_2^{\delta} $ in the next lemma. 

\begin{lem}\label{lem: M2}
	Let $\varPi = T^{ \vnu }$    and  $M = T^{\varDelta}$.  Then for  $\varDelta < \min \big\{ 1/2, (2\vnu+1)/ 4 \big\}  $ there exists $\vepsilon = \vepsilon (\vnu, \varDelta ) > 0$ such that 
	\begin{align}\label{9eq: M2}
		\SM_2^{\delta} = \frac { \varPi T} {\pi \sqrt{\pi}}   {\SM}_{20}^{\delta}    +  O \big(\varPi T^{1-\vepsilon} \big), 
	\end{align}
	with 
	\begin{align}\label{9eq: M20}
		{\SM}_{20}^{\delta}  = \sum_{m} \mathop{\sum \sum}_{(m_1^{\star}, m_2^{\star}) = 1}  \frac { x_{m_1^{\star} m} x_{m_2^{\star} m}   } { m_1^{\star} m_2^{\star} m } \lp \varSigma (m) \log \frac { {T_{\delta}} } {  {m_1^{\star} m_2^{\star}}  } - 2 \breve{\varSigma}  (m) \rp , 
	\end{align} 
	where $T_{\delta}$ is a multiple of $T$ defined by 
	\begin{align}
		\log T_{\delta} =    \log T + \gamma_{\delta}. 
	\end{align} 
\end{lem}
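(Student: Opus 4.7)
The plan is to substitute the asymptotic formula of Theorem \ref{thm: C2(m)} directly into
\[
\SM_2^{\delta} = \mathop{\sum\sum}_{m_1, m_2 \leqslant M} \frac{x_{m_1} x_{m_2}}{\sqrt{m_1 m_2}} \SC_2^{\delta}(m_1, m_2),
\]
separate the main term from the error, and then reorganize the main term into the bilinear form claimed for $\SM_{20}^{\delta}$.

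For the main term, I would parametrize $m_1 = m_1^{\star} m$ and $m_2 = m_2^{\star} m$ with $m = (m_1, m_2)$ and $(m_1^{\star}, m_2^{\star}) = 1$. Then $r = m_1^{\star} m_2^{\star}$ and $\sqrt{m_1 m_2}\,\sqrt{r} = m_1^{\star} m_2^{\star} m$, while $\log(T/r) + \gamma_{\delta} = \log(T_{\delta}/(m_1^{\star} m_2^{\star}))$ by definition of $T_{\delta}$. Since $x_m$ is supported on square-free integers, the coprimality conditions $(m_1^{\star}, m) = (m_2^{\star}, m) = 1$ are automatically enforced by the support of $x_{m_1^{\star} m}$ and $x_{m_2^{\star} m}$, so only $(m_1^{\star}, m_2^{\star}) = 1$ has to appear as an explicit constraint. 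Collecting terms gives exactly $(\varPi T/\pi\sqrt{\pi})\cdot\SM_{20}^{\delta}$.

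For the error, I would estimate the three pieces in the bound of Theorem \ref{thm: C2(m)} separately, weighted by $|x_{m_1} x_{m_2}|/\sqrt{m_1 m_2}$ and summed over $m_1, m_2 \leqslant M$. Using $\sum_{m \leqslant M} 1/\sqrt{m} \asymp \sqrt{M}$ and $\sum_{m \leqslant M} \sqrt{m} \asymp M^{3/2}$, the piece $(\varPi + m_1 + m_2)\sqrt{T}$ contributes $T^{\vepsilon} M\sqrt{T}(\varPi + M)$. For the remaining two pieces, the relevant weight is $1/(\sqrt{m_1 m_2}\,\sqrt{r}) = (m_1, m_2)/(m_1 m_2)$, and the Dirichlet-type estimate $\sum_{m_1, m_2 \leqslant M} (m_1, m_2)/(m_1 m_2) \ll \log^2 M$ yields contributions $T^{1+\vepsilon}$ and $T^{\vepsilon}\varPi^3/T$. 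With $\varPi = T^{\vnu}$ and $M = T^{\varDelta}$, these are all $O(\varPi T^{1-\vepsilon})$ provided $\varDelta < 1/2$ (from the $\varPi M\sqrt{T}$ piece), $\varDelta < (2\vnu+1)/4$ (from the $M^2\sqrt{T}$ piece), and $0 < \vnu < 1$ (automatic from the hypothesis $T^{\vepsilon} \leqslant \varPi \leqslant T^{1-\vepsilon}$), giving precisely the condition $\varDelta < \min\{1/2, (2\vnu+1)/4\}$.

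Neither step is especially deep; the only care needed is in the main-term reorganization and in tracking which of the two constraints on $\varDelta$ binds. The second-moment bound is always tighter than the first-moment condition $\varDelta < (2\vnu+1)/3$ of Lemma \ref{lem: M1}, so it will be the effective constraint on the mollifier length in the Cauchy--Schwarz step \eqref{9eq: Cauchy}.
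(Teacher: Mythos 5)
Your proposal follows the paper's own proof essentially verbatim: insert the asymptotic of Theorem \ref{thm: C2(m)} into $\SM_2^{\delta} = \sum\sum x_{m_1}x_{m_2}(m_1m_2)^{-1/2}\SC_2^{\delta}(m_1,m_2)$, use $\sqrt{m_1m_2\,r}=m_1^{\star}m_2^{\star}m$ and $\log(T/r)+\gamma_\delta=\log(T_\delta/m_1^{\star}m_2^{\star})$ to produce $\SM_{20}^{\delta}$, and bound the error pieces by $T^{\vepsilon}M\sqrt{T}(\varPi+M)$ and $T^{\vepsilon}(T+\varPi^3/T)$ respectively, which enforces $\varDelta<\min\{1/2,(2\vnu+1)/4\}$. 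The argument is correct and identical to the paper's.
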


By relaxing the co-primality condition $(m_1^{\star}, m_2^{\star}) = 1$ in \eqref{9eq: M20} by the M\"obius function, we have 
\begin{align*}
	 {\SM}_{20}^{\delta} =    \sum_{m} \sum_{d}  \mathop{\sum \sum}_{n_1, \, n_2}  \frac {\mu (d)  x_{dmn_1} x_{dm n_2}   } { d^2 m n_1 n_2 }  \lp  \varSigma (m) \log \frac {  T_{\delta} } {d^2   {n_1 n_2}  } - 2 \breve{\varSigma}  (m)  \rp. 
\end{align*}
By the definitions of $\varSigma (m)$ and $\breve{\varSigma} (m)$ in \eqref{1eq: Sigma}, some simple calculations yield
\begin{align*}
	\sum_{d m = h} \frac{   \mu(d) \varSigma (m)}{d} = 1,   \qquad \sum_{d m = h} \frac{   \mu(d)    \varSigma (m )}{d} \log d + \sum_{d m = h} \frac{   \mu(d)    \breve{\varSigma} (m )}{d} = 0. 
\end{align*}
Consequently, 
\begin{align}\label{9eq: M20, simplified}
	{\SM}_{20}^{\delta} = \sum_{h}  \mathop{\sum \sum}_{n_1, \, n_2}   \frac {x_{h n_1} x_{h n_2}} {h n_1 n_2 } \log \frac {  T_{\delta} } {  {n_1 n_2}  }. 
\end{align}
Thus if we set
\begin{align}\label{9eq: ym}
	y_{h} = \sum_{n}  \frac {x_{h n}} {n}  , \qquad 	\breve{y}_h = \sum_{n}  \frac {x_{h n} } {n} \log n ,
\end{align} 
then \eqref{9eq: M20, simplified} reads 
\begin{align}\label{9eq: M20=MMM}
	{\SM}_{20}^{\delta} = \log T_{\delta} \cdot  {\SM}_{20}  - 2 \breve{\SM\ }\hspace{-3.5pt}_{20}    , \qquad {\SM}_{20} = \sum_{h} \frac { y_h^2} {h}   , \quad \breve{\SM\ }\hspace{-3.5pt}_{20}  = \sum_{h} \frac {  y_h \breve{y}_h} {h}. 
\end{align} 
By M\"obius inversion, it follows from \eqref{9eq: ym} that 
\begin{align}\label{9eq: xn=y}
	 x_n = \sum_{h} \frac { \mu (h)} {h} y_{h n},   \qquad 
	\breve{y}_h = \sum_{n} \frac {  \varLambda(n)} {n} y_{h n}, 
\end{align}
where  $\varLambda (n) $ is the von Mangoldt function. Note that the assumption $x_1 = 1$ in \eqref{9eq: x1=1} now becomes the linear constraint
\begin{align}\label{9eq: x1=1, 2}
	\sum_{h} \frac {  \mu (h)} {h} y_{h} = 1. 
\end{align}
By Cauchy, the optimal choice of $ y_{h}  $  which optimizes the first quadratic form $ {\SM}_{20} $ in \eqref{9eq: M20=MMM} with respect to \eqref{9eq: x1=1, 2} is given by 
\begin{align}\label{9eq: choice ym}
	y_h =  \frac {\mu (h)} {\Xi (M) }        , \qquad \Xi (M) =    \sum_{h \shskip \leqslant M} \frac {    \mu (h)^2 } {h} .
\end{align}      
By the Prime Number Theorem, 
\begin{align}\label{9eq: Xi, asymp}
\Xi (M) =      \log M 	+ O(1). 
\end{align}
Moreover, by  \eqref{9eq: xn=y} and \eqref{9eq: choice ym}, it is clear that our  $x_{m}$  are indeed bounded. 
 
By definition, 
\begin{align}\label{9eq: hM20}
	\SM_{20} = \frac 1 {\Xi (M)} . 
\end{align}
By the Prime Number Theorem (see \cite[(3.14.3)]{Titchmarsh-Riemann}), 
\begin{align}\label{9eq: PNT}
 \sum_{ p \shskip \leqslant X} \frac {   \log p } {p} = \log X + O (1),
\end{align} 
and, for $h$ square-free,  
\begin{align}\label{9eq: PNT, 2}
	\sum_{p | h} \frac {\log p} {p} = O    (\log \log (3h)).
\end{align}
It follows from   \eqref{9eq: xn=y}, \eqref{9eq: choice ym}, \eqref{9eq: PNT}, and \eqref{9eq: PNT, 2} that 
\begin{align*}
	  {\breve{y}_h}  /{y_h} = - 
	\mathop{\sum_{p  \nmid h  }}_{  p \shskip \leqslant M/ h  } \frac { \log p} {  p }  
	  =   -      \log  ( {M} /{h}) + O (\log \log (3 h))  . 
\end{align*}
Consequently,
\begin{align*}
	\begin{aligned}
		\breve{\SM\ }\hspace{-3.5pt}_{20} & = \sum_{h \leqslant M} \frac { y_h^2 } {h}      \log h - (\log M - O (\log \log M) ) \sum_{h \leqslant M} \frac {y_h^2} {h}       , 
	\end{aligned}
\end{align*}
and a partial summation using \eqref{9eq: Xi, asymp} yields
\begin{align} \label{9eq: bM20}
		\breve{\SM\ }\hspace{-3.5pt}_{20}   = -  \frac { 1 } {2   } + O  \bigg( \frac {\log \log M} { \log M } \bigg) . 
\end{align}

By \eqref{9eq: M2}, \eqref{9eq: M20=MMM}, \eqref{9eq: Xi, asymp}, \eqref{9eq: hM20}, and \eqref{9eq: bM20}, we conclude with the following asymptotic formula for $\SM_2^{\delta}$. 

\begin{lem}\label{lem: M2, 2}
	 Let $\varPi = T^{ \vnu  }$    and  $M = T^{\varDelta}$.  Then for $\varDelta < \min \big\{ 1/2, (2\vnu+1)/ 4 \big\}  $  we have
	 \begin{align}
	 	\SM_2^{\delta} = \frac {  \varPi T} {\pi \sqrt{\pi}}  \cdot  \frac {1+\varDelta}  {  \varDelta}  \bigg( 1 + O  \bigg( \frac {\log\log T} {\log T}   \bigg) \bigg)   . 
	 \end{align}
\end{lem}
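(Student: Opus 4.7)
The plan is a short accounting exercise assembling the pieces already proved in Section \ref{sec: mollified}. The substantive analytic work—most notably the delicate partial summation leading to \eqref{9eq: bM20}—has been completed, and all that remains is to substitute the optimized mollifier data into Lemma \ref{lem: M2} and to verify that the secondary error absorbs.

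First I would insert the evaluations \eqref{9eq: hM20} and \eqref{9eq: bM20} into the decomposition \eqref{9eq: M20=MMM}. Together with $\log T_{\delta} = \log T + \gamma_{\delta}$ and the prime number estimate $\Xi(M) = \log M + O(1) = \varDelta \log T + O(1)$ from \eqref{9eq: Xi, asymp}, this gives
\begin{align*}
\SM_{20}^{\delta} = \frac{\log T_{\delta}}{\Xi(M)} + 1 + O\!\left(\frac{\log \log M}{\log M}\right) = \frac{1}{\varDelta} + 1 + O\!\left(\frac{\log \log T}{\log T}\right) = \frac{1+\varDelta}{\varDelta} + O\!\left(\frac{\log \log T}{\log T}\right).
\end{align*}
Here I am using that $(\log T + \gamma_{\delta})/(\varDelta \log T + O(1)) = 1/\varDelta + O(1/\log T)$, which is harmless under the lemma's hypothesis that $\varDelta$ is a fixed positive constant.

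Next, I would plug this asymptotic into Lemma \ref{lem: M2} to obtain
\begin{align*}
\SM_2^{\delta} = \frac{\varPi T}{\pi \sqrt{\pi}} \cdot \frac{1+\varDelta}{\varDelta}\left(1 + O\!\left(\frac{\log \log T}{\log T}\right)\right) + O\!\left(\varPi T^{1-\vepsilon}\right).
\end{align*}
Since $\varDelta$ is fixed, the ratio $(1+\varDelta)/\varDelta$ is bounded above and below, so the residual additive error $O(\varPi T^{1-\vepsilon})$ is of strictly smaller order than $\varPi T \cdot (\log\log T)/(\log T)$ and can be absorbed into the multiplicative error factor. This yields precisely the claimed formula.

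There is no real obstacle at this stage; the point of this lemma is merely to record the consequence of the optimized choice \eqref{9eq: choice ym} together with \eqref{9eq: Xi, asymp}--\eqref{9eq: bM20}. The only thing requiring a moment's care is confirming that the admissible range of $\varDelta$ imposed by Lemma \ref{lem: M2}—namely $\varDelta < \min\{1/2,(2\vnu+1)/4\}$—is preserved and that the $\vepsilon = \vepsilon(\vnu,\varDelta)$ there is still positive; both are automatic from the hypothesis.
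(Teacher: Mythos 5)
Your proposal is correct and is essentially identical to the paper's argument: the paper proves Lemma \ref{lem: M2, 2} simply by citing \eqref{9eq: M2}, \eqref{9eq: M20=MMM}, \eqref{9eq: Xi, asymp}, \eqref{9eq: hM20}, and \eqref{9eq: bM20}, which is exactly the assembly you carry out explicitly, including the absorption of the $O(\varPi T^{1-\vepsilon})$ error into the $O(\log\log T/\log T)$ factor.
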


\subsection{Conclusion} \label{sec: conclusion, non-vanishing}
For  $0 < \vnu < 1$  let   $\varPi = T^{ \vnu  }$.   
By Lemmas \ref{lem: M1} and \ref{lem: M2, 2}, it follows from Cauchy   in \eqref{9eq: Cauchy}  that
\begin{align}
	\sumd_{  L (s_f,   f    ) \neq 0} \omega_f \upvarphi (t_f)   \geqslant \frac {  \varPi T} {\pi \sqrt{\pi}}   \bigg( \max \bigg\{ \frac 1 3, \frac {2\vnu + 1} {2\vnu + 5 }  \bigg\} - \vepsilon  \bigg) , 
\end{align} 
 as $T \ra \infty$  for any $\vepsilon > 0$.
Next, we apply Lemma \ref{lem: unsmooth, 1} with $a_n =   \delta (L (s_f,   f    ) \neq 0)$ (namely, the Kronecker $\delta$ symbol) and $H = \varPi^{1+\vepsilon}$, obtaining 
\begin{align}\label{9eq: unsmoothed}
		\mathop{\sumd_{ |t_f - T| \shskip \leqslant H}}_{L (s_f,   f    ) \neq 0}  \omega_f     \geqslant \frac {2 H T} {\pi^2 }   \bigg( \max \bigg\{ \frac 1 3, \frac {2\mu + 1} {2\mu + 5 }  \bigg\} - \vepsilon  \bigg) ,\qquad \mu = \frac {\log H} {\log T}, 
\end{align} 
for any $ T^{ \vepsilon} \leqslant  H \leqslant T/3$. 
 By the harmonic weighted Weyl law (see \eqref{2eq: Weyl, 1}):
\begin{align}\label{9eq: Weyl, 1}
	\sumd_{  t_f \leqslant T} \omega_f = \frac {T^2} {2 \pi^2} + O (T), 
\end{align} 
we have
\begin{align}\label{9eq: Weyl, 2}
	 {\sumd_{  |t_f - T| \shskip \leqslant H}}  \omega_f     = \frac {2 H T} {\pi^2 } + O (T) . 
\end{align} 

\begin{theorem}
Let $\vepsilon > 0$ be arbitrarily small. As $T \ra \infty$, we have  
	\begin{align}\label{9eq: case t<T}
		\mathop{\sumd_{  t_f \leqslant T}}_{L (s_f,   f    ) \neq 0}  \omega_f  \geqslant   \bigg(\frac 1 {3} - \vepsilon \bigg) \sumd_{ t_f \leqslant T} \omega_f . 
	\end{align}
	and, for $0 < \mu < 1$,  
	\begin{align}\label{9eq: case t-T<H}
		\mathop{\sumd_{  |t_f - T| \shskip \leqslant T^{\mu}}}_{L (s_f,   f    ) \neq 0}  \omega_f  \geqslant   \bigg( \min \bigg\{ \frac 1 3, \frac {2\mu + 1} {2\mu + 5 }  \bigg\} - \vepsilon  \bigg)   \sumd_{  |t_f - T| \shskip \leqslant T^{\mu}} \omega_f , 
	\end{align}
where the sums are over $\SB_{\delta}$. 
\end{theorem}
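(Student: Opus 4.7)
The plan is to assemble the ingredients prepared in \S\ref{sec: mollified}. From the Cauchy--Schwarz inequality~\eqref{9eq: Cauchy}, I need a lower bound on $(\SM_1^{\delta})^2/\SM_2^{\delta}$; these two quantities have been given asymptotic formulas in Lemmas~\ref{lem: M1} and \ref{lem: M2, 2}. I then unsmooth the Gaussian weight via Lemma~\ref{lem: unsmooth, 1} and normalize against the harmonic Weyl law~\eqref{9eq: Weyl, 2}.

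Fix $0 < \vnu < 1$ and take $\varPi = T^{\vnu}$, $M = T^{\varDelta}$ with $\varDelta$ approaching its supremum $\min\{1/2,\,(2\vnu+1)/4\}$ from Lemma~\ref{lem: M2, 2}; this automatically satisfies the looser constraint $\varDelta < (2\vnu+1)/3$ required for Lemma~\ref{lem: M1}. Substituting the two asymptotic formulas into \eqref{9eq: Cauchy} yields
\[
(\SM_1^{\delta})^2/\SM_2^{\delta} \;=\; \frac{\varPi T}{\pi\sqrt{\pi}} \cdot \frac{\varDelta}{1+\varDelta} \cdot (1+o(1)).
\]
Since $\varDelta/(1+\varDelta)$ is increasing in $\varDelta$, pushing $\varDelta$ to its upper limit and splitting on the two regimes $\vnu \geqslant 1/2$ and $\vnu < 1/2$ gives the smoothed lower bound
\[
\sumd_{L(s_f,f) \neq 0} \omega_f\, \upvarphi(t_f) \; \geqslant \; \frac{\varPi T}{\pi\sqrt{\pi}}\,\bigg(\min\bigg\{\frac{1}{3},\, \frac{2\vnu+1}{2\vnu+5}\bigg\} - \vepsilon\bigg).
\]

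To unsmooth I apply Lemma~\ref{lem: unsmooth, 1} with $a_f = \delta(L(s_f,f)\neq 0)$ and $H$ slightly larger than $\varPi^{1+\vepsilon}$; the hypothesis \eqref{10eq: assumption} with $\lambda = 1$ is immediate from the trivial bound $|a_f| \leqslant 1$ together with the smoothed harmonic Weyl law~\eqref{1eq: Weyl, smooth}. Writing $\mu = \log H/\log T$ and letting $\vnu \to \mu$ and $\vepsilon \to 0$, this converts the smoothed bound into
\[
\sumd_{|t_f - T|\leqslant T^{\mu},\,L(s_f,f)\neq 0}\! \omega_f \;\geqslant\; \frac{2\shskip T^{\mu+1}}{\pi^2}\,\Big(\min\big\{\tfrac{1}{3},\,\tfrac{2\mu+1}{2\mu+5}\big\}-\vepsilon\Big),
\]
and dividing by the asymptotic $\sum_{|t_f-T|\leqslant T^{\mu}}\omega_f \sim 2 T^{\mu+1}/\pi^2$ from \eqref{9eq: Weyl, 2} gives the short-interval density statement.

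For the long interval $t_f \leqslant T$, I decompose dyadically: for $T_j = T/2^j$ with $j = 0, 1, \ldots, O(\log T)$, apply the short-interval argument centered at $3T_j/4$ with radius $H_j = T_j/4$. The corresponding $\mu_j = \log(T_j/4)/\log(3T_j/4)$ tends to $1$ as $T_j \to \infty$, hence each dyadic piece enjoys density $\geqslant 1/3 - \vepsilon$. Summing, and observing that the range $t_f = O(1)$ contributes $O(1)$ which is negligible against the Weyl main term $T^2/2\pi^2$, yields the first assertion of the theorem. The only real bookkeeping obstacle is to arrange $\vnu$, $\varDelta$, and the various $\vepsilon$'s so that all $o(1)$ terms from Lemmas~\ref{lem: M1}, \ref{lem: M2, 2}, and \ref{lem: unsmooth, 1} collapse uniformly against $\varPi T$; this reduces to choosing $\vnu$ slightly below $\mu$ and $\varDelta$ slightly below its admissible supremum, a routine verification.
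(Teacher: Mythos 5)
Your proposal is correct and follows essentially the same route as the paper: optimize $\varDelta/(1+\varDelta)$ over $\varDelta < \min\{1/2,(2\vnu+1)/4\}$ to obtain the smoothed lower bound via Cauchy--Schwarz in \eqref{9eq: Cauchy}, then unsmooth through Lemma~\ref{lem: unsmooth, 1} with $\vnu$ slightly below $\mu$, normalize by \eqref{9eq: Weyl, 2}, and treat the long interval by a geometric decomposition into short intervals each with $\mu_j \to 1$. You even (correctly) write $\min$ where the display preceding \eqref{9eq: unsmoothed} and \eqref{9eq: unsmoothed} itself carry a typo reading $\max$; the optimization only produces $\min$. The one cosmetic slip: in the long-interval step the dyadic decomposition must terminate around $T_j \sim T^{\vepsilon'}$ (rather than $T_j = O(1)$) because Lemma~\ref{lem: unsmooth, 1} needs $T_j^{\vepsilon} \leqslant H_j$, but the leftover range $t_f \Lt T^{\vepsilon'}$ contributes only $O(T^{2\vepsilon'})$ by the Weyl law, which is still negligible.
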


Note that \eqref{9eq: case t<T} follows from \eqref{9eq: unsmoothed} and \eqref{9eq: Weyl, 1} with $ H = T/3 $ along with a dyadic summation, while \eqref{9eq: case t-T<H} follows directly from \eqref{9eq: unsmoothed} and \eqref{9eq: Weyl, 2}.    

Since the harmonic weight 
\begin{align*}
	\omega_f = \frac {2} {  L(1, \mathrm{Sym}^2 f    )}, 
\end{align*}
one may remove $\omega_f$ without compromising the strength of results by the method of Kowalski and Michel \cite{KM-Analytic-Rank}, adapted for the Maass-form case  in \cite{BHS-Maass} (see also the paragraph below (2.9) in \cite{IS-Siegel}).  Thus after the removal of  $\omega_f$  we obtain the non-vanishing results in Theorems \ref{thm: non-vanishing} and \ref{thm: non-vanishing, short}.

\section{Moments with no Twist and Smooth Weight}\label{sec: unsmooth}

\begin{theorem}
	Let $ T^{ \vepsilon} \leqslant H \leqslant T/3$. Then 
	\begin{align}\label{10eq: 1st moment}
		\sumd_{ |t_f - T| \leqslant H }   \omega_f L (s_f,    f     ) =  \frac { 4  } {\pi^2} H T   + O \big(T^{ \vepsilon} (  H \sqrt{T} + T )  \big). 
	\end{align}
\end{theorem}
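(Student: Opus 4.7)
The plan is to reduce the sharp-cutoff first moment on the left of \eqref{10eq: 1st moment} to the smooth twisted first moment $\SC_1^\delta(1)$ via the unsmoothing Lemma~\ref{lem: unsmooth, 1}, and then to invoke the asymptotic of Corollary~\ref{cor: 1st moment}.

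First I would verify the hypothesis \eqref{10eq: assumption} of Lemma~\ref{lem: unsmooth, 1} for $a_f = L(s_f, f)$. By the Cauchy--Schwarz inequality together with the smoothly weighted Weyl law \eqref{1eq: Weyl, smooth} and the mean-square formula \eqref{2eq: C2(1)} in Corollary~\ref{cor: 1st moment},
\begin{align*}
\sum_{f \in \SB_{\delta}} \omega_f \upvarphi(t_f)\, |L(s_f, f)| \Lt \bigg(\sum_f \omega_f \upvarphi(t_f)\bigg)^{\!\! 1/2} \bigg(\sum_f \omega_f \upvarphi(t_f)\,|L(s_f, f)|^2\bigg)^{\!\! 1/2} \Lt \varPi \shskip T^{1+\vepsilon},
\end{align*}
so one may take $\lambda = 1+\vepsilon$. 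Choosing $\varPi = T^{\vepsilon}$ at the minimal admissible size, Lemma~\ref{lem: unsmooth, 1} yields
\begin{align*}
\sumd_{|t_f-T|\leqslant H}\omega_f L(s_f,f) = \sum_{f \in \SB_{\delta}} \omega_f \shskip \uppsi(t_f) L(s_f, f) + O\big(T^{1+2\vepsilon}\big).
\end{align*}

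Next, by the definition \eqref{5eq: defn w(nu)} of $\uppsi$ and Fubini,
\begin{align*}
\sum_{f\in\SB_{\delta}}\omega_f\shskip \uppsi(t_f)\shskip L(s_f,f) = \frac{1}{\sqrt{\pi}\shskip \varPi}\int_{T-H}^{T+H} \bigg(\sum_{f \in \SB_\delta} \omega_f \shskip L(s_f, f) \exp\big(-(t_f-K)^2/\varPi^2\big)\bigg)\,dK ,
\end{align*}
the inner sum being $\SC_1^\delta(1)$ with the Gaussian center shifted from $T$ to $K$. Substituting the asymptotic in Corollary~\ref{cor: 1st moment} uniformly in $K \in [T-H, T+H]$ (so that $K \asymp T$), the main term integrates to yield the main term of \eqref{10eq: 1st moment}, and the remainder $O(\varPi K^{1/2+\vepsilon})$ contributes, after the normalizing factor $1/\sqrt{\pi}\shskip \varPi$, at most $O(H T^{1/2+\vepsilon})$.

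Combining the two error contributions gives the total $O(T^{1+2\vepsilon} + H T^{1/2+\vepsilon}) = O(T^{\vepsilon}(H\sqrt{T}+T))$, as claimed. The main point is the balancing of $\varPi$: the minimal choice $\varPi = T^\vepsilon$ is essentially forced, since any larger $\varPi$ would blow the unsmoothing error $\varPi \shskip T^{1+\vepsilon}$ past the target $T^{1+\vepsilon}$, while the constraint $\varPi^{1+\vepsilon}\leqslant H$ of Lemma~\ref{lem: unsmooth, 1} is automatically satisfied under the hypothesis $H \geqslant T^\vepsilon$. There is no substantive obstacle beyond this calibration; the real work has already been absorbed into Theorem~\ref{thm: C1(m)} and its Corollary~\ref{cor: 1st moment}.
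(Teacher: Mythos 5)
Your approach is identical to the paper's: apply the unsmoothing Lemma~\ref{lem: unsmooth, 1} with $a_f = L(s_f, f)$ (the Cauchy--Schwarz verification of hypothesis \eqref{10eq: assumption} is correct and is a point the paper leaves implicit), and then insert Corollary~\ref{cor: 1st moment} into the $K$-integral. The only substantive difference is the calibration of $\varPi$: you fix $\varPi = T^{\vepsilon}$, while the paper takes $\varPi = H/\sqrt{T} + T^{\vepsilon}$; both choices yield the claimed error $O(T^{\vepsilon}(H\sqrt{T}+T))$, so your remark that the minimal $\varPi$ is ``essentially forced'' is not accurate---when $H > T^{1/2}$ the dominant target term is $H\sqrt{T}$ rather than $T$, leaving room for $\varPi$ as large as $H/\sqrt{T}$, which is precisely what the paper uses. (The nominal tension between $\varPi = T^{\vepsilon}$ and $\varPi^{1+\vepsilon} \leqslant H = T^{\vepsilon}$ is harmless: just use a smaller exponent for $\varPi$.)

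You do, however, assert rather than verify that ``the main term integrates to yield the main term of \eqref{10eq: 1st moment}.'' Carrying it out,
\begin{align*}
\frac{1}{\sqrt{\pi}\shskip\varPi}\int_{T-H}^{T+H} \frac{\varPi K}{\pi\sqrt{\pi}}\,\nd K \;=\; \frac{1}{\pi^2}\cdot\frac{(T+H)^2-(T-H)^2}{2} \;=\; \frac{2HT}{\pi^2},
\end{align*}
which is \emph{half} of the stated $\frac{4}{\pi^2}HT$. Consistency with the Weyl law points the same way: Corollary~\ref{cor: 1st moment} and the smooth Weyl law \eqref{1eq: Weyl, smooth} share the main term $\varPi T/\pi\sqrt{\pi}$, so the sharp-cutoff first moment should share the main term $2HT/\pi^2$ of the sharp-cutoff Weyl law \eqref{9eq: Weyl, 2}, not exceed it by a factor of two. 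Either Corollary~\ref{cor: 1st moment} (together with Lemma~\ref{lem: H+-(m)} and Lemma~\ref{lem: M1}, which all carry $\varPi T/\pi\sqrt{\pi}$) or the displays \eqref{10eq: 1st moment} and \eqref{2eq: 1st moment} carry a spurious factor of two; in any case, the whole purpose of the unsmoothing reduction is to deliver this constant, so it must be computed rather than accepted on faith.
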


\begin{proof}
By applying Lemma \ref{lem: unsmooth, 1} to the first asymptotic formula in Corollary \ref{cor: 1st moment}, we have
\begin{align*}
	 \sumd_{ |t_f - T| \leqslant H }   \omega_f L (s_f,    f     ) = \frac { 4   } {\pi^2} H T + O \big(T^{\vepsilon } (H \sqrt{T} + \varPi T) \big), 
\end{align*}
for any $T^{\vepsilon} \leqslant \varPi^{1+\vepsilon} \leqslant   H \leqslant T / 3 $, and we obtain \eqref{10eq: 1st moment} on the choice $ \varPi = H / \sqrt{T} + T^{\vepsilon}   $. 
\end{proof}

\begin{theorem}
	 Let $ T^{ \vepsilon} \leqslant H \leqslant T/3$. Then
	 \begin{align}\label{11eq: 2nd moment}
	 	 \sumd_{ |t_f - T| \leqslant H }   \omega_f |L (s_f,    f     )|^2  = \frac {1} {\pi^2} \hspace{-1pt} \int_{T-H}^{T+H} K  (  \log K + \gamma_{\delta}) \nd K + O \big(T^{ \vepsilon} ( {\textstyle H \sqrt{T} + \sqrt{H} T} )  \big). 
	 \end{align}
\end{theorem}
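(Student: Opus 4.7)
The plan is to mirror the proof of \eqref{10eq: 1st moment} given immediately above, with the second-moment asymptotic \eqref{2eq: C2(1)} of Corollary \ref{cor: 1st moment} playing the role of the first-moment one, and with $a_f = |L(s_f,f)|^2$ in the unsmoothing Lemma \ref{lem: unsmooth, 1}. The hypothesis \eqref{10eq: assumption} of that lemma is met with $\lambda = 1+\vepsilon$, since \eqref{2eq: C2(1)} itself gives $\sum_f \omega_f \upvarphi(t_f)|L(s_f,f)|^2 \Lt \varPi T^{1+\vepsilon}$ for all admissible $\varPi$.

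Next, I would observe that the asymptotic \eqref{2eq: C2(1)} holds uniformly when the Gaussian weight is re-centered at any $K$ with $K \asymp T$; in particular for all $K \in [T-H,T+H]$, since $H \leqslant T/3$. Combining this with the definition of $\uppsi$ in \eqref{5eq: defn w(nu)} and integrating over $K$ yields
\begin{align*}
\sum_{f \in \SB_{\delta}} \omega_f \uppsi(t_f) |L(s_f, f)|^2 = \frac{1}{\pi^2} \int_{T-H}^{T+H} K (\log K + \gamma_{\delta}) \nd K + O \bigg( T^{\vepsilon} \bigg( H\sqrt{T} + \frac{HT}{\varPi} + \frac{H\varPi^2}{T} \bigg) \bigg),
\end{align*}
where the error comes directly from dividing the $T^\vepsilon(\varPi\sqrt{T} + T + \varPi^3/T)$ contribution in \eqref{2eq: C2(1)} by $\sqrt{\pi}\varPi$ and multiplying by the length $2H$ of the $K$-interval.

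Applying Lemma \ref{lem: unsmooth, 1} then adds the further error $O(\varPi T^{1+\vepsilon})$, giving a total error $O(T^{\vepsilon}(H\sqrt{T} + HT/\varPi + H\varPi^2/T + \varPi T))$. To balance the two dominant terms $HT/\varPi$ and $\varPi T$, I would take $\varPi = \sqrt{H}$, with mild adjustments to ensure the technical constraints $T^\vepsilon \leqslant \varPi$ and $\varPi^{1+\vepsilon} \leqslant H$ (both possible under $H \geqslant T^\vepsilon$ after a harmless redefinition of $\vepsilon$). Both dominant terms then become $\sqrt{H}T$, while the residual $H\varPi^2/T = H^2/T$ is dominated by $H\sqrt{T}$ since $H \leqslant T/3$, producing the claimed error $O(T^\vepsilon(H\sqrt{T} + \sqrt{H}T))$.

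There is no substantive obstacle. The only genuine difference from the proof of \eqref{10eq: 1st moment} is the new error contribution $\varPi^3/T$ present in \eqref{2eq: C2(1)} but absent from the first-moment asymptotic; after the $K$-average this becomes $H\varPi^2/T$, which is benign under $H \leqslant T/3$ and does not disturb the optimal choice $\varPi = \sqrt{H}$.
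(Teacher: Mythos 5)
Your proposal is correct and follows the paper's proof essentially verbatim: apply Lemma \ref{lem: unsmooth, 1} (with $a_f = |L(s_f, f)|^2$, $\lambda = 1+\vepsilon$) to the second formula in Corollary \ref{cor: 1st moment}, then balance the errors with $\varPi = \sqrt{H}$. You give slightly more detail than the paper — spelling out the re-centering of the Gaussian at $K$ and tracking the $H\varPi^2/T$ contribution explicitly before noting it is dominated — but the route, the key lemma, and the choice of $\varPi$ are all the same.
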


\begin{proof}
	By applying Lemma \ref{lem: unsmooth, 1}  to the second asymptotic formula in Corollary \ref{cor: 1st moment}, we infer that the spectral second moment on the left of  \eqref{11eq: 2nd moment} equals 
	\begin{align*}
	 \frac {1} {\pi^2} \int_{T-H}^{T+H} K^2 (  \log K + \gamma_{\delta}) \nd K + O \bigg(T^{\vepsilon} \bigg(H\sqrt{T} + \frac {H T } {\varPi  } + \varPi T \bigg)\bigg), 
	\end{align*}
for any $\varPi^{1+\vepsilon} \leqslant H \leqslant T/3$, and we obtain \eqref{11eq: 2nd moment} on the choice $\varPi =   \sqrt{H}$. 
	\end{proof}

Corollary \ref{cor: unsmoothed} is a direct consequence of the above asymptotics with $H = T/3$ along with a dyadic summation.

{\large \part{Density Theorems}  }

\vspace{5pt}

\section{Sums of Variant Kloosterman Sums} 

Let us introduce the variant Kloosterman sum
\begin{align} \label{2eq: defn V}
	V_{q} (m, n; c) =	\mathop{\sum_{\valpha     (\mathrm{mod}\, c)}}_{ (\valpha     (q-\valpha    ), c) = 1 }  e \bigg(   \frac {  \widebar{\valpha    } m +  \overline{q - \valpha    } n } {c} \bigg). 
\end{align} The definition of $V_{q} (m, n; c)$ is essentially from Iwaniec--Li  \cite[(2.17)]{Iwaniec-Li-Ortho} and arose in the identity of Luo \cite[\S 3]{Luo-Twisted-LS}: 
\begin{align}\label{2eq: S = V}
	S (m, n; c)  e \Big(\frac {m+n} {c} \Big) = \sum_{r q   = c } V_{q} (m, n; r). 
\end{align} 
The purpose of this section is to prove the following analogue of \cite[Lemma 6.1]{ILS-LLZ}.

\begin{lem}
	Assuming  the Riemann hypothesis for Dirichlet $L$-functions, for any $x \geqslant 2$, we have 
	\begin{align}\label{12eq: sum of Vq}
		\mathop{\sum_{p \leqslant x} }_{p \nmid c} V_{q} (m p, n ; c) \log p = \frac {x} {\varphi (c)} R (m; c) R_q  (n; c) + O (\varphi (c) \sqrt{x} \log^2 (cx) ) , 
	\end{align} 
 where $\varphi (c) $ is the Euler function, $R (m; c) = S (m, 0; c)$, and $R_q (n; c) = V_q (0, n; c) $. 
\end{lem}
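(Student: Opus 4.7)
The plan is to apply Dirichlet character orthogonality to separate the prime variable $p$ from the inner $\valpha$-sum in $V_q$, and then invoke the explicit formula under GRH. First I would open the definition \eqref{2eq: defn V} and interchange summations to obtain
\[
\mathop{\sum_{p \leqslant x}}_{p \shskip \nmid c} V_q (mp, n; c) \log p = \sum_{\valpha} e\bigg( \frac{\overline{q - \valpha}\shskip n}{c} \bigg) \mathop{\sum_{p \leqslant x}}_{p \shskip \nmid c} e\bigg( \frac{\overline{\valpha}\shskip m p}{c} \bigg) \log p,
\]
where the $\valpha$-sum is subject to $(\valpha (q-\valpha), c) = 1$. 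Detecting the condition $p \equiv b \shskip (\mathrm{mod}\, c)$ by Dirichlet characters in the inner $p$-sum, and then making the change of variable $b \mapsto \valpha b$ in the resulting inner exponential sum, I would rewrite the inner $p$-sum as
\[
\frac{1}{\varphi(c)} \sum_{\chi \shskip (\mathrm{mod}\, c)} \psi(x, \chi) \shskip \bar\chi(\valpha) \shskip \tau_m (\bar\chi), \qquad \tau_m(\bar\chi) = \sum_{(b, c) = 1} \bar\chi(b) e(mb/c),
\]
where $\psi(x, \chi) = \sum_{p \leqslant x} \chi(p) \log p$.

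Substituting back, the entire double sum splits along characters as $\varphi(c)^{-1} \sum_{\chi} \psi(x, \chi) \tau_m(\bar\chi) U_q(\chi; n, c)$, where $U_q(\chi; n, c) = \sum_{\valpha} \bar\chi(\valpha) e(\overline{q-\valpha}\shskip n/c)$. The principal character $\chi_0$ contributes the main term: one has $\tau_m(\chi_0) = R(m; c)$ and $U_q(\chi_0; n, c) = R_q(n; c)$ directly from the definitions, while the Riemann hypothesis for $\zeta(s)$ gives $\psi(x, \chi_0) = x + O(\sqrt{x} \log^2 x)$, producing the desired $x R(m;c) R_q(n;c)/\varphi(c)$. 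For non-principal $\chi$, the GRH bound $\psi(x, \chi) \Lt \sqrt{x} \log^2 (cx)$ controls the remaining contribution.

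The final step is to bound the character-averaged error, which is where the main technical point lies: one must show that $\sum_{\chi} |\tau_m(\bar\chi)| \shskip |U_q(\chi; n, c)| \Lt \varphi(c)^2$, so that the aggregated error becomes $\varphi(c) \sqrt{x} \log^2 (cx)$ as claimed. By Cauchy--Schwarz this reduces to computing $\sum_\chi |\tau_m(\bar\chi)|^2$ and $\sum_\chi |U_q(\chi; n, c)|^2$, and a direct application of orthogonality shows that each of these is at most $\varphi(c)^2$: the diagonals $b = b'$ or $\valpha = \valpha'$ collapse the inner character sum to $\varphi(c)$, and there are at most $\varphi(c)$ surviving outer terms. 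The main obstacle I anticipate is maintaining this bookkeeping tightly; a naive approach handling each $\valpha$ individually via GRH would lose a factor of $\sqrt{\varphi(c)}$, so it is essential to execute Cauchy--Schwarz across the character sum after the full decomposition rather than term by term.
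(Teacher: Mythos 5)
Your proof is correct and follows essentially the same route as the paper's: both decompose via Dirichlet character orthogonality, factor the resulting exponential sums into a product of Gauss-type sums ($G(m;\chi)$ and $G_q(n;\chi)$ in the paper's notation, your $\tau_m$ and $U_q$) by a multiplicative change of variable, isolate the principal character for the main term, and close with Cauchy--Schwarz across $\chi$ using the $L^2$-identities from orthogonality. Your explicit count $\sum_\chi|\tau_m(\bar\chi)|^2 = \varphi(c)^2$ is in fact the correct one (the paper's displayed identity $\sum_\chi|G(m;\chi)|^2 = R(0;c)$ is off by a factor of $\varphi(c)$, though this does not affect its final error term), and your remark that the Cauchy--Schwarz must be applied after the full factorization rather than $\valpha$-by-$\valpha$ correctly identifies the one place where the bookkeeping could go wrong.
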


\begin{proof}
	As in  \cite[\S 6]{ILS-LLZ}, we shall use the Riemann hypothesis for Dirichlet $L$-functions in the form
	\begin{align*}
		\sum_{p \leqslant x} \vchi (p) \log p =  \delta ({\vchi}) \cdot  x + O (\sqrt{x} \log^2 (c x)), 
	\end{align*}
where $ \vchi $ is any character to modulus $c$ and $\delta ({\vchi})$ is the indicator of the principal character. 
By definition, 
\begin{align*}
	\sum_{\valpha     (\mathrm{mod}\, c)} \vchi (\valpha) V_q (\valpha m, n; c) = \mathop{\mathop{\sum \sum}_{\valpha, \beta     (\mathrm{mod}\, c)}}_{(q -\beta, c) = 1} \vchi (\valpha) e \bigg(   \frac { \valpha \widebar{\beta    } m +  \overline{q - \beta    } n } {c} \bigg) , 
\end{align*}
and on the change $ \valpha \ra \valpha \beta $ it splits into the product $ G  (m; \vchi) G_{  q} (n; \vchi)$ with
\begin{align*}
	G  (m; \vchi) = \sum_{\valpha     (\mathrm{mod}\, c)} \vchi (\valpha) e \Big(   \frac { \valpha   m  } {c} \Big) , \qquad G_{  q}  (n; \vchi) =   \mathop{\sum_{\valpha     (\mathrm{mod}\, c)}}_{ (      q-\valpha    , c) = 1 } \vchi (\valpha) e \bigg(   \frac {     \overline{q - \valpha    } n } {c} \bigg). 
\end{align*}
Note that for the principal character these (Gauss) sums become the (Ramanujan) sums $R (m; c)$ and $R_q (n; c)$, and that by the orthogonality of characters 
\begin{align*}
	\sum_{\vchi (\mathrm{mod}\, c)} |G  (m; \vchi)|^2 = R (0; c) , \qquad \sum_{\vchi (\mathrm{mod}\, c)} |G_{ q} (m; \vchi)|^2 = R_q (0; c) . 
\end{align*} 
Finally, from these formulae we derive
\begin{align*}
	\mathop{\sum_{p \leqslant x} }_{p \nmid c} V_{q} (m p, n ; c) \log p & = \frac 1 {\varphi (c)} \sum_{\vchi (\mathrm{mod}\, c)} \bigg( \sum_{\valpha     (\mathrm{mod}\, c)} \vchi (\valpha) V_q (\valpha m, n; c) \bigg) \bigg( \sum_{p \leqslant x} \overline{\vchi} (p) \log p  \bigg)\\
	& = \frac 1 {\varphi (c)} \sum_{\vchi (\mathrm{mod}\, c)} G  (m; \vchi) G_{  q} (n; \vchi) \big( \delta {(\vchi)} \cdot x + O (\sqrt{x} \log^2 (c x))  \big),
\end{align*}
and hence \eqref{12eq: sum of Vq}. 
\end{proof}

Later in our application,  $m = 1$ and $n = \pm 1$. It is well-known that the Ramanujan sum 
\begin{align}\label{12eq: R(1;c)}
	R (1; c) = \mu (c). 
\end{align}
By the Chinese remainder theorem, it is easy to prove 
\begin{align}\label{12eq: Rq(1;c)}
	R_q (\pm 1; c) = O \big( \tau (c) \mu (c)^2  \big). 
\end{align}

\section{Further Analysis for the Bessel Integrals} \label{sec: Bessel, 2}

Set $\upgamma (t) \equiv 1$ in \eqref{4eq: h2(t;y)} so that
\begin{align*}
	h_2 (t; y) =  2   \cos (2\log y) (\upvarphi (t) + \upvarphi (-t)). 
\end{align*}
According to \eqref{4eq: H2=H} and \eqref{4eq: H(x,y) = I(v,w)}, up to a negligible error, we split $H_2^{\pm} (x, y) $ into four similar parts, one of which reads
\begin{align}\label{13eq: I-+(v,w)}
    \exp (- i (v \pm w) ) \cdot I^{_\pm}_{^{- +} }  (v, w)   , \qquad   v = \frac {xy} {2}, \quad w = \frac {x/y} 2, 
\end{align}
with
\begin{align}\label{13eq: I(v,w)}
	I^{_\pm}_{^{- +} } (v, w) = \varPi T  \int_{-\varPi^{\vepsilon}/\varPi}^{\varPi^{\vepsilon}/ \varPi} 
	g (  \varPi r) \exp ( 2 i T r -  i (\uprho(r) v \pm \uprho (-r) w ) )   \nd r ,
\end{align} 
for $ \uprho (r) = \exp (r) - 1. $  Later, we shall have
\begin{align*}
	x = \frac {4\pi \sqrt{p}} {c}, \qquad y = \sqrt{p}, 
\end{align*}
and
\begin{align*}
	v = \frac {2\pi p} {c}, \qquad w = \frac {2\pi} {c},
\end{align*}
so  in practice $w $ is small and inessential, and will be suppressed from the notation. 

\begin{lem}\label{lem: further analysis}
	 Write 
	 \begin{align}\label{13eq: derivative I(v,w)}
	 	I^{_{\pm \natural}}_{^{- +} } (v, w) = \frac { \partial I^{_\pm}_{^{-+} } (v, w)  } {\partial v}  . 
	 \end{align}
Then we have bounds
\begin{align}\label{13eq: bounds for I(v,w)}
	I^{_\pm}_{^{-+} } (v, w) \Lt T^{1+\vepsilon}, \qquad I^{_{\pm \natural}}_{^{-+} } (v, w)  \Lt T^{1+\vepsilon}/ \varPi. 
\end{align}
Moreover, if $ w < \varPi$, then   $I^{_\pm}_{^{-+} } (v, w),  I^{_{\pm \natural}}_{^{-+} } (v, w) = O (T^{-A})$ unless $|2T - v| \Lt \varPi^{\vepsilon} ( \varPi +T / \varPi )$. 
\end{lem}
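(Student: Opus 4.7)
The plan is to first establish the trivial bounds in \eqref{13eq: bounds for I(v,w)}, and then carry out a stationary phase analysis of the oscillatory integral in \eqref{13eq: I(v,w)} to prove the negligibility claim.

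For the first claim, I would argue trivially. On the integration range $|r| \leqslant \varPi^{\vepsilon}/\varPi$, the Schwartz function $g$ satisfies $g(\varPi r) = O(1)$, so estimation gives $|I^{\pm}_{-+}(v,w)| \Lt \varPi T \cdot \varPi^{\vepsilon}/\varPi \Lt T^{1+\vepsilon}$. Differentiating \eqref{13eq: I(v,w)} in $v$ brings down an extra factor $-i\uprho(r) = -i(e^r - 1) = O(\varPi^{\vepsilon}/\varPi)$ on this range, yielding the refined bound $I^{\pm\natural}_{-+}(v, w) \Lt T^{1+\vepsilon}/\varPi$.

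For the negligibility claim, the phase in \eqref{13eq: I(v,w)} is $\phi(r) = 2Tr - \uprho(r) v \mp \uprho(-r) w$ with derivative $\phi'(r) = 2T - e^r v \pm e^{-r} w$. Using $|e^{r} - 1| \leqslant 2|r|$ on the support together with $w < \varPi$, one has
\[
|\phi'(r) - (2T - v)| \leqslant v |e^r - 1| + |e^{-r}| w \Lt v \varPi^{\vepsilon}/\varPi + \varPi.
\]
The key step is to show that whenever $|2T - v| \Gt \varPi^{\vepsilon}(\varPi + T/\varPi)$ (with sufficiently large implied constant), $|\phi'(r)|$ is uniformly bounded below by a constant multiple of $|2T - v|$ on the integration range. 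I would split into two cases: for $v \leqslant 4T$ the displayed estimate directly gives $|\phi'(r) - (2T-v)| \leqslant \frac{1}{2}|2T-v|$, hence $|\phi'(r)| \geqslant \frac{1}{2}|2T-v|$; for $v \geqslant 4T$ the triangle inequality gives $|\phi'(r)| \geqslant e^r v - 2T - w \geqslant \frac{1}{4}|2T-v|$, and $|2T-v| \geqslant 2T$ automatically dominates $\varPi^{\vepsilon}(\varPi + T/\varPi)$ by AM--GM (since $\varPi + T/\varPi \geqslant 2\sqrt{T}$ and $\varPi^{\vepsilon} \leqslant T^{1/2}$).

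With the lower bound $|\phi'(r)| \Gt \varPi^{\vepsilon}(\varPi + T/\varPi)$ in hand, the proof concludes by iterated integration by parts against $e^{i\phi(r)}$. Boundary terms at $|r| = \varPi^{\vepsilon}/\varPi$ vanish to $O(T^{-A})$ by the Schwartz decay of $g(\varPi r)$ near $|\varPi r| = \varPi^{\vepsilon}$. The bulk integral produces factors $\varPi g'(\varPi r)/\phi'(r)$ and $\phi''(r)/\phi'(r)^2$ with $|\phi''(r)| \Lt v + \varPi$; using $(\varPi + T/\varPi)^2 \Gt T$ by AM--GM (and noting that for $v \gg T$ the factor $|\phi'|$ is correspondingly larger), each iteration gains a factor $O(\varPi^{-\vepsilon})$, and since $\varPi \geqslant T^{\vepsilon}$, sufficiently many iterations yield $O(T^{-A})$ for any $A$. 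The variant $I^{\pm\natural}_{-+}$ is handled identically since the extra factor $\uprho(r)$ is smooth with all derivatives bounded on the support. The main obstacle is the delicate regime $v \asymp T$ near the would-be stationary point, where the lower bound on $|\phi'|$ is essentially optimal and the per-iteration gain in integration by parts is slim, but this still suffices thanks to the hypothesis $\varPi \geqslant T^{\vepsilon}$.
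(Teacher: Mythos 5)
Your argument is correct and follows essentially the same route as the paper: trivial estimation for \eqref{13eq: bounds for I(v,w)}, then non-stationary phase for the negligibility claim, based on the observation that $\phi'(r) = 2T - e^r v \pm e^{-r}w$ stays bounded below by $\gg |2T - v| \gg \varPi^{\vepsilon}(\varPi + T/\varPi)$ on the support when $v$ avoids the critical window, with the $w$-term under control precisely because $w < \varPi$. The only cosmetic difference is that the paper absorbs $\exp(\mp i\uprho(-r)w)$ into the weight $g(\varPi r)$ at the outset and then invokes a packaged non-stationary phase lemma (Lemma B.1 of the companion paper), whereas you carry the $w$-term through the phase and spell out the iterated integration by parts by hand.
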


\begin{proof}
	Note that  $I^{_{\pm \natural}}_{^{-+} } (v, w) $ is very similar to $I^{_{\pm }}_{^{-+} } (v, w) $:
	\begin{align} \label{13eq: In(v,w)}
		I^{_{\pm \natural}}_{^{-+} } (v, w) = \varPi T  \int_{-\varPi^{\vepsilon}/\varPi}^{\varPi^{\vepsilon}/ \varPi} 
		g (  \varPi r) \cdot i \uprho (r) \exp ( 2 i T r -  i (\uprho(r) v \pm \uprho (-r) w ) )   \nd r .
	\end{align}
Clearly \eqref{13eq: bounds for I(v,w)} follows from \eqref{13eq: I(v,w)} and \eqref{13eq: In(v,w)} by trivial estimation. As for the second statement, we use the method of stationary phase. By $w < \varPi$, we may as well absorb the factor $ \exp (\mp i \uprho (-r)w)$ into the weight $g (\varPi r)$ so that the phase function is $ 2 T r - \uprho (r) v $, while its derivative equals
\begin{align*}
	2 T -  \exp(r) v = 2 T - v + O ( v \varPi^{\vepsilon}/ \varPi ) .  
\end{align*} 
Thus in the case $ |2T - v| \Gt \varPi^{\vepsilon} ( \varPi +T / \varPi ) $, by an application of Lemma B.1 in \cite{Qi-GL(2)xG(2)-RS} with $ P = 1/\varPi $, $Q = 1$, $Z = T $, and $ R = \varPi^{\vepsilon} ( \varPi +T / \varPi ) $, we infer that the integrals in \eqref{13eq: I(v,w)} and \eqref{13eq: In(v,w)} are negligibly small.  Note here that $R $ is larger than both $ \varPi^{1+\vepsilon}$ and $  T^{1/2+\vepsilon}$.  
\end{proof}

\begin{remark}
As for the other three integrals $I^{_\pm}_{^{+-} } (v, w) $, $I^{_\pm}_{^{+ +} } (v, w) $, and $I^{_\pm}_{^{- -} } (v, w) $, the first one is similar, while the latter two are always negligible. 
\end{remark}

\section{The Explicit Formula}


By applying \cite[Proposition 2.1]{Rudnick-Sarnak-RMT} with 
\begin{align*}
	g (u) = \frac 1 {\log T} {e \lp -  \frac { u t_f  } {2\pi}  \rp}   \hat{\upphi} \bigg( \frac {u} {\log T}\bigg), \qquad h (r) = \upphi \bigg(\frac {\log T} {2\pi} (r-t_f) \bigg), 
\end{align*}
we obtain the explicit formula
\begin{align}\label{12eq: D=H-P}
D  (f; \upphi; T) = H (f; \upphi; T) - P (f; \upphi; T), 
\end{align}
with
\begin{align}
D  (f; \upphi; T) =	\sum_{\gamma_f }  \upphi \bigg(\frac {\log T} {2\pi} (\gamma_f-t_f) \bigg) , 
\end{align}
 as defined before in \eqref{1eq: defn D(f)}, and
\begin{align}\label{12eq: H(f;T)}
H ( f; \upphi; T) =	\frac 1 {\pi}    \int_{-\infty}^{\infty}   \upphi \bigg(\frac {\log T} {2\pi} r \bigg) \bigg\{  \frac {\Gamma_{\mathbf{R}}'  } {\Gamma_{\mathbf{R}}  } \big(2\vkappa_f +  i r  \big) + \frac {\Gamma_{\mathbf{R}}'  } {\Gamma_{\mathbf{R}}  } \big(2\vkappa_f + 2 it_f + ir   \big) \bigg\} \nd r , 
\end{align}
\begin{align}\label{12eq: P(f;T)}
	P (f; \upphi; T) =    2  \sum_{n=1}^{\infty} \frac {\Lambda (n) a_f (n) \cos (t_f \log n)} { \sqrt{n} \, {\log T} }        \hat{\upphi} \bigg( \frac {\log n} {\log T}\bigg), 
\end{align}
where
\begin{align*}
	\vkappa_f = \frac {1+2\delta_f} {4} , 
\end{align*} (see \eqref{3eq: FE, Q} and \eqref{3eq: GammaR(s)}),  $\Lambda (n) $ is the  von Mangoldt function ($\Lambda (n) = \log p$ if $n = p^{\vnu}$ and $0$ otherwise), and 
\begin{align*}
	a_f (p^{\vnu} ) = \valpha_f (p)^{\vnu} + \beta_f (p)^{\vnu}, 
\end{align*}
for $\valpha_f (p)$ and $ \beta_f (p)$ defined as in the Euler product \eqref{3eq: Euler}.

\begin{lem}\label{lem: asymp for H}
	Let $\mathrm{Re} (s) > 0$.   Then for $T, |s|  \Gt 1$, we have 
	\begin{align}\label{12eq: gamma integral}
		\frac 1 {\pi}   \int_{-\infty}^{\infty}   \upphi \bigg(\frac {\log T} {2\pi} r \bigg)   \frac {\Gamma_{\mathbf{R}}'  } {\Gamma_{\mathbf{R}}  }  ( 2 s +  i r   )   \nd r =   \frac {\log (s/\pi)} {\log T}  \hat{\upphi} (0) + O \bigg(\frac 1 {|s| \log T} \bigg). 
	\end{align}
\end{lem}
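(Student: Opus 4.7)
The idea is to reduce the integral to the Stirling asymptotic for the digamma function $\psi = \Gamma'/\Gamma$. By \eqref{3eq: GammaR(s)} we have $\Gamma_{\mathbf{R}}(w) = \pi^{-w/2}\Gamma(w/2)$, and logarithmic differentiation yields
\[
\frac{\Gamma_{\mathbf{R}}'}{\Gamma_{\mathbf{R}}}(2s+ir) = -\tfrac{1}{2}\log\pi + \tfrac{1}{2}\psi(s+ir/2).
\]
After the substitution $u = (\log T/2\pi)\,r$, the left-hand side of \eqref{12eq: gamma integral} becomes
\[
\frac{2}{\log T}\int_{-\infty}^{\infty}\upphi(u)\Big(-\tfrac{1}{2}\log\pi + \tfrac{1}{2}\psi(s+\pi iu/\log T)\Big)\nd u.
\]

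The key input is then the Stirling asymptotic $\psi(z) = \log z + O(1/|z|)$, uniform on any closed sector strictly contained in the right half-plane. Since $\mathrm{Re}(s) > 0$ and the shift $\pi iu/\log T$ is purely imaginary, the point $z = s+\pi iu/\log T$ stays in the right half-plane throughout. On the range $|u| \leqslant |s|\log T/(2\pi)$ one checks $|z| \asymp |s|$, and Taylor expansion of the logarithm gives $\log(s+\pi iu/\log T) = \log s + O(|u|/(|s|\log T))$, whence
\[
\psi(s+\pi iu/\log T) = \log s + O(1/|s|).
\]
The complementary range $|u| > |s|\log T/(2\pi)$ contributes negligibly thanks to the Schwartz decay of $\upphi$ together with the assumption $T, |s| \Gt 1$.

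Substituting the asymptotic back, the $u$-independent main term $\tfrac{1}{2}\log(s/\pi)$ pulls out of the integral to yield
\[
\frac{2}{\log T}\cdot\tfrac{1}{2}\log(s/\pi)\cdot\int_{-\infty}^{\infty}\upphi(u)\nd u = \frac{\log(s/\pi)}{\log T}\hat{\upphi}(0),
\]
using $\hat{\upphi}(0) = \int \upphi(u)\nd u$ by definition of the Fourier transform, while the error integrates against $\upphi \in L^1$ to give $O(1/(|s|\log T))$, matching \eqref{12eq: gamma integral}. The only point requiring care is the uniformity of the Stirling estimate as $|\mathrm{Im}(s)| \to \infty$; this is handled by the standing assumption $\mathrm{Re}(s) > 0$, which keeps $z$ well inside the right half-plane where the implicit constant in $\psi(z)-\log z = O(1/|z|)$ is absolute.
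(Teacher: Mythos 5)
Your proof is correct and follows essentially the same route as the paper: write $\Gamma_{\mathbf{R}}(w) = \pi^{-w/2}\Gamma(w/2)$ to reduce the integrand to the digamma function, rescale the variable so a factor $1/\log T$ appears in front, and invoke the Stirling asymptotic for $\psi$. One intermediate claim is imprecise: on the range $|u| \leqslant |s|\log T/(2\pi)$ you assert pointwise that $\psi(s+\pi i u/\log T) = \log s + O(1/|s|)$, but the Taylor error in $\log(s + \pi i u/\log T)$ is $O(|u|/(|s|\log T))$, which near the endpoint of that range is $O(1)$ rather than $O(1/|s|)$; the conclusion is still correct because the extra term, integrated against the Schwartz function $\upphi$, contributes only $O(1/(|s|\log T))$ even before the $1/\log T$ prefactor is applied. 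The paper avoids the range split altogether by using the crude uniform bound $\log(s+w) - \log s = O\big(|w/s|\log(|w|+2)\big)$ valid for all $\mathrm{Re}(w) \geqslant 0$, which feeds directly into the integral; either way works.
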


\begin{proof}
By definition, $\Gamma_{\mathbf{R}} (s) = \pi^{-s/2} \Gamma (s/2) $, and by the change $r \ra 2\pi r/ \log T$,  the integral in \eqref{12eq: gamma integral} is turned into 
\begin{align*}
\frac {1} {\log T} \int_{-\infty}^{\infty}   \upphi (r)  \lp  \frac {\Gamma'} {\Gamma }  \bigg( s +  \frac { \pi i r } {\log T}  \bigg) -\log \pi \rp    \nd r . 
\end{align*}
It follows from the Stirling formula 
	\begin{align*}
		\frac {\Gamma'(s)} {\Gamma (s) }= \log s + O \lp \frac 1 {|s|} \rp, 
	\end{align*}
and the crude uniform bound
\begin{align*}
	\log (s + w) - \log s = O \lp \left|\frac {w} {s} \right| \log (|w|+2) \rp, \qquad \text{($\mathrm{Re}(s) > 0, \, \mathrm{Re}(w) \geqslant 0$)} , 
\end{align*} 
that the integral above equals
\begin{align*}
& \quad \  \frac {1} {\log T} \int_{-\infty}^{\infty}   \upphi (r)      \log  \bigg( \frac s {\pi} +  \frac {   i r } {\log T}  \bigg)   \nd r + O \bigg(\frac 1 {|s| \log T} \bigg) \\
& = \frac {1} {\log T} \int_{-\infty}^{\infty}   \upphi (r) \bigg(\log \lp \frac s {\pi} \rp  + O \bigg(\frac {|r| \log (|r|+2)} {|s| \log T} \bigg)   \bigg)  \nd r + O \bigg(\frac 1 {|s| \log T} \bigg) \\
& = \frac {\log (s/\pi)  } {\log T} \int_{-\infty}^{\infty}   \upphi (r)   \nd r + O \bigg(\frac 1 {|s| \log T} \bigg),  
\end{align*} 
as desired. 
\end{proof}

By applying Lemma \ref{lem: asymp for H} with $s = \vkappa_f$ or $s = \vkappa_f + it_f$ to \eqref{12eq: H(f;T)}, we have 
\begin{align}\label{12eq: H}
H ( f; \upphi; T) =	\frac {\log (\vkappa_f + it_f)  } {\log T} \hat{\upphi} (0) + O \bigg(\frac 1 {  \log T} \bigg).
\end{align}


\begin{lem}\label{lem: sum log p/ ps}
Assume the Riemann hypothesis {\rm(}for the Riemann zeta function{\rm)}.	Let $ s =  1 + it $ and $t \Gt 1$. Then  
\begin{align}\label{11eq: sum log p}
	\sum_{p < x} \frac {\log p} {p^s} = O(  \log \log t),
\end{align}
for $ x < t$, where the implied constant is absolute. 
\end{lem}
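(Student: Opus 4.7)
The natural approach is to split the range of $x$ into two regimes and apply different tools in each. Throughout, since the contribution of proper prime powers is absolutely bounded,
\[
\sum_{p<x} \frac{\log p}{p^s} = \sum_{n<x} \frac{\Lambda(n)}{n^s} + O(1),
\]
so it suffices to handle the $\Lambda$-sum.

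\emph{Small $x$:} When $x \leq (\log t)^{10}$, I would simply use the Chebyshev/Mertens bound
\[
\bigg|\sum_{p<x} \frac{\log p}{p^s}\bigg| \leqslant \sum_{p<x} \frac{\log p}{p} = \log x + O(1) = O(\log\log t),
\]
which is an entirely elementary estimate that does not require RH.

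\emph{Large $x$:} When $x > (\log t)^{10}$, I would apply a truncated Perron formula to $\sum_{n<x}\Lambda(n)/n^s$, with contour at $\mathrm{Re}(w) = 1/\log x$ and truncation height $U = x^2$, picking up the integrand $-(\zeta'/\zeta)(s+w) \cdot x^w/w$. Shifting the contour to $\mathrm{Re}(w) = -1/2 + \vepsilon$ and collecting residues yields the principal term $-\zeta'(s)/\zeta(s)$ at $w=0$, the contribution $x^{-it}/(-it) = O(1/t)$ from the pole of $\zeta$ at $s+w=1$, and, under RH, a sum of residues at $w = \rho - s$ with $\rho = 1/2 + i\gamma$, of total size
\[
\ll x^{-1/2} \sum_{|\gamma|\leqslant U} \frac{1}{|\rho-s|} \ll x^{-1/2} \log^2(xt)
\]
by the standard dyadic density estimate $N(T+1)-N(T) \ll \log T$ for zeros of $\zeta$. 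The Perron truncation error is $O(x\log^2(xU)/U) = O(1)$. The leading term $|\zeta'(s)/\zeta(s)|$ is bounded by Littlewood's classical estimate $O(\log\log t)$, valid on $\mathrm{Re}(s)=1$ under RH. Combining, one obtains $O(\log\log t) + O(x^{-1/2}\log^2 t) = O(\log\log t)$ in the regime $x \geqslant (\log t)^{10}$.

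\emph{Main obstacle.} The delicate step is Littlewood's bound $|\zeta'(1+it)/\zeta(1+it)| = O(\log\log t)$ under RH, which requires the partial fraction expansion of $\zeta'/\zeta$ via the Hadamard product together with a careful separation of zeros close to and far from the line $\mathrm{Re}(s)=1$. This is standard (see e.g.\ Titchmarsh, \emph{The Theory of the Riemann Zeta-Function}, \S 14.33) and may be cited; everything else in the argument is routine Perron analysis combined with the zero-counting estimate.
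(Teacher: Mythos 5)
The proposal follows essentially the same route as the paper: a trivial Mertens-type bound $\sum_{p<x}\log p/p = O(\log x)$ when $x$ is bounded by a power of $\log t$, and a Perron-formula argument combined with Littlewood's RH-conditional estimate $\zeta'/\zeta(1+it) = O(\log\log t)$ (Titchmarsh \S 14.33) when $x$ is larger; the paper splits at $x = \log^2 t$ rather than $(\log t)^{10}$, which is immaterial. The one technical slip is in the large-$x$ case: with $s = 1+it$, your contour at $\mathrm{Re}(w) = -1/2+\vepsilon$ corresponds to $\mathrm{Re}(s+w) = 1/2+\vepsilon$, which under RH still lies strictly to the \emph{right} of every nontrivial zero $\rho = 1/2 + i\gamma$; those zeros have $\mathrm{Re}(\rho - s) = -1/2 < -1/2+\vepsilon$, so they are not crossed in the shift and there is no residue sum over $\rho$. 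Either bound the integral on the shifted line directly using the RH-conditional bound $\zeta'/\zeta(\sigma+iu) \ll (\log |u|)^{2-2\sigma}$ for $\sigma$ bounded away from $1/2$ (the paper does exactly this, shifting only to $\mathrm{Re}(s+w) = \sigma_0$ with $1/2 < \sigma_0 < 1$ and citing Titchmarsh \S 14.5), or shift slightly past the critical line to $\mathrm{Re}(s+w) = 1/2-\vepsilon$, in which case the residue sum over zeros that you estimate via the density bound $N(T+1)-N(T) \ll \log T$ does arise and your bound $O(x^{-1/2}\log^2(xt))$ is then correct. Either repair is routine and does not affect the final $O(\log\log t)$ conclusion.
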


\begin{proof}
As usual, write $s = \sigma + it$. By \cite[\S \S 14.5, 14.33]{Titchmarsh-Riemann}, under the Riemann hypothesis, we have
\begin{align}\label{11eq: zeta'/zeta}
	\frac {\zeta' (s)} {\zeta (s)}  = O  \big( ({\log t})^{2-2\sigma} \big), 
\end{align}
uniformly for $1/2 < \sigma_0 \leqslant \sigma \leqslant \sigma_1 < 1$, while 
\begin{align}\label{11eq: zeta'/zeta (1+it)}
	\frac {\zeta' (s)} {\zeta (s)} = O (\log \log t), 
\end{align}
for $ \sigma = 1$. 
By  \eqref{11eq: zeta'/zeta}, it follows from the Perron formula in \cite[Lemma 3.12]{Titchmarsh-Riemann} and the contour-shift argument in  \cite[\S \S 3.14, 3.15]{Titchmarsh-Riemann} with $c = 1/ \log x$ and $\delta = 1 - \sigma_0 $ that
\begin{align*}
	 \sum_{n < x}  \frac {\Lambda (n)} {n^{s}} + \frac {\zeta' (s)} {\zeta (s)} - \frac {x^{s}} {1-s} =  O \bigg( \frac {\log^2 x} {T} + \frac {\log T \cdot \log^{ 2\delta} t} {x^{\delta}}  + \frac {\log^{2 \delta} t} {T}  \bigg). 
\end{align*}
in the case that $\sigma = 1$ and $T < t/2$; here one may let $x$ be  half-integral. 
For $x < t$, on choosing $\sqrt{T} = \log x + \log^{\delta} t$, we obtain 
\begin{align} \label{11eq: log p bound}
	\sum_{p < x}  \frac {\log p} {p^{s}} + \frac {\zeta' (s)} {\zeta (s)}  =  O \bigg( 1 + \frac {\log \log t \cdot \log^{ 2\delta} t} {x^{\delta}}    \bigg). 
\end{align} 
Note here that 
\begin{align*}
	\sum_{n < x}  \frac {\Lambda (n)} {n^{s}} = \sum_{p < x} \frac {\log p} {p^s} + O (1).   
\end{align*} 
Also, trivially we have
\begin{align} \label{11eq: trivial log x}
	\sum_{p < x}  \frac {\log p} {p^{s}} = O (\log x).  
\end{align} 
According as $x > \log^2 t$ or not, the bound \eqref{11eq: sum log p} follows from \eqref{11eq: zeta'/zeta (1+it)}, \eqref{11eq: log p bound} or \eqref{11eq: trivial log x}.  
\end{proof}

Now we turn to the sum $P ( f; \upphi; T)$ as defined in \eqref{12eq: P(f;T)}: unlike the cases of central $L$-values in \cite{ILS-LLZ,Alpoge-Miller-1,Qi-Liu-LLZ}, this will not contribute a main term. Suppose that $\hat{\upphi}$ has support in $(-2, 2)$, say. 

Firstly, we may discard those terms with $n = p^{\vnu}$ for $\vnu \geqslant 3$, since  $|a_f (p^{\vnu}) | \leqslant 2 p^{\frac 7 {64} \vnu}$ by the Kim--Sarnak bound \eqref{3eq: Kim-Sarnk}, and hence their contribution is $O (1/\log T)$. Next, by partial summation, it follows from Lemma \ref{lem: sum log p/ ps} that
\begin{align} \label{12eq: P}
	\sum_{p} \frac { \cos (2 t_f \log p )  \log p} {p  \log T}  \hat{\upphi}  \bigg(\frac {2 \log p} {\log T} \bigg) = O_{\upphi}  \bigg( \frac {\log \log T} {\log T}  \bigg)  . 
\end{align} 
Since $a_f (p) = \lambda_{f} (p)$ and $ a_f (p^2) = \lambda_{f} (p^2) - 1 $, we conclude that   
\begin{align} \label{12eq: P1, P2}
	 P ( f; \upphi; T) = P_1 (f; \upphi; T) + P_2 (f; \upphi; T) + O_{\upphi}  \bigg( \frac {\log \log T} {\log T}  \bigg) ,
\end{align}
with 
\begin{align}\label{12eq: P12}
	P_{\vnu} (f; \upphi; T) = 2 \sum_{p}  \lambda_f (p^{\vnu})  \frac {\cos (\vnu t_f \log p ) \log p} {p^{\vnu/2} \log T}    \hat{\upphi}  \bigg(\frac {\vnu \log p} {\log T} \bigg). 
\end{align}

Finally, we obtain the following asymptotic formula by combining \eqref{12eq: D=H-P}, \eqref{12eq: H},  and  \eqref{12eq: P1, P2}. 

\begin{lem}\label{lem: explicit formula}
	We have
	\begin{align}\label{12eq: asymp D}
		D (f; \upphi; T) = \frac {\log (\vkappa_f + it_f)  } {\log T} \hat{\upphi} (0) - P_1 (f; \upphi; T) - P_2 (f; \upphi; T) + O  \bigg( \frac {\log \log T} {\log T}  \bigg) ,
	\end{align}
 if $\hat{\upphi}$ has support in $(-2, 2)$. 
\end{lem}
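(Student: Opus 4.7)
The plan is to assemble the statement directly from the three ingredients already in place: the Rudnick--Sarnak explicit formula \eqref{12eq: D=H-P}, the gamma-integral evaluation \eqref{12eq: H}, and the prime-power decomposition \eqref{12eq: P1, P2}. Since $D = H - P$, the task reduces to extracting a closed-form main term from $H(f;\upphi;T)$ and isolating the $\vnu = 1, 2$ pieces of the prime sum $P(f;\upphi;T)$ while controlling the rest.

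For $H$, I would apply Lemma \ref{lem: asymp for H} separately to the two gamma-factor contributions in the integral representation \eqref{12eq: H(f;T)}, taking $s = \vkappa_f$ and $s = \vkappa_f + i t_f$. The first choice yields $\log(\vkappa_f/\pi) \hat{\upphi}(0)/\log T$, which is $O(1/\log T)$ and gets absorbed into the error. The second contributes $\log((\vkappa_f + it_f)/\pi)\hat{\upphi}(0)/\log T$, whose $\log\pi$ term is again $O(1/\log T)$, leaving the genuine main term $\hat{\upphi}(0)\log(\vkappa_f + it_f)/\log T$. This yields \eqref{12eq: H}.

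For $P$, I would split the prime-power sum in \eqref{12eq: P(f;T)} according to $\vnu = 1$, $\vnu = 2$, and $\vnu \geq 3$. The support of $\hat\upphi$ in $(-2, 2)$ truncates $n$ at $T^2$. The Kim--Sarnak bound \eqref{3eq: Kim-Sarnk} gives $|a_f(p^\vnu)| \leq 2 p^{7\vnu/64}$, so the $\vnu \geq 3$ contribution is bounded by a convergent sum $\sum_p p^{-\vnu(1/2 - 7/64)}\log p$ divided by $\log T$, which is $O(1/\log T)$. The $\vnu = 1$ terms produce exactly $P_1(f;\upphi;T)$, while for $\vnu = 2$ the Hecke identity $a_f(p^2) = \lambda_f(p^2) - 1$ yields $P_2(f;\upphi;T)$ plus the critical-line residual sum appearing in \eqref{12eq: P}.

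The main obstacle is exactly that residual sum $\sum_p \cos(2 t_f \log p)\log p/p$, weighted by the slowly varying $\hat{\upphi}(2\log p/\log T)/\log T$. Unlike central-value analogues (where the analogous term sits at the edge of the critical strip via $\cos(0) = 1$ and contributes a main term), here the oscillation factor $\cos(2t_f \log p)$ corresponds to the Dirichlet polynomial $\sum_p \log p/p^{1 + 2it_f}$, which by Lemma \ref{lem: sum log p/ ps} is $O(\log\log T)$ under the Riemann hypothesis for $\zeta$. A partial summation then converts this into the bound $O(\log\log T/\log T)$ recorded in \eqref{12eq: P}. Collecting the three contributions through $D = H - P$ gives the asymptotic \eqref{12eq: asymp D}.
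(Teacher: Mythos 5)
Your proposal reproduces the paper's argument essentially verbatim: you combine the Rudnick--Sarnak explicit formula \eqref{12eq: D=H-P} with the gamma-integral asymptotic from Lemma \ref{lem: asymp for H} (applied at $s=\vkappa_f$ and $s=\vkappa_f+it_f$) to obtain \eqref{12eq: H}, truncate the $\vnu\geqslant 3$ prime powers via Kim--Sarnak, and control the $a_f(p^2)=\lambda_f(p^2)-1$ residual $\sum_p \cos(2t_f\log p)\log p/p$ by Lemma \ref{lem: sum log p/ ps} together with partial summation, exactly as in the derivation of \eqref{12eq: P} and \eqref{12eq: P1, P2}. This is correct and matches the paper's proof.
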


\section{Proof of the Density Theorems} 

As in \eqref{4eq: defn h} let us define 
\begin{align}\label{13eq: phi}
	\upvarphi_{T, \varPi} (t) = \exp \left( - \frac {(t-T)^2}  {\varPi^2} \right),
\end{align}
and consider the $\SB_{\delta} $-averaged $1$-level density  
\begin{align}\label{12eq: density}
	\SD^{\delta}_{\natural}   ( \upphi;  T, \varPi ) =   \sum_{f \in \SB_{\delta} } \omega_f	 D (f; \upphi; T) \big(\upvarphi_{T, \varPi} (t_f) + \upvarphi_{T, \varPi} (- t_f) \big) .
\end{align}
Clearly 
$\SD^{\delta}_{\natural}   ( \upphi;  T, \varPi ) $ differs from  $\SD^{\delta}   ( \upphi;  T, \varPi ) $ as in \eqref{1eq: density} only by  an exponentially small error term. 
By inserting \eqref{12eq: asymp D}, we have 
\begin{align}
\SD^{\delta}_{\natural}   ( \upphi;  T, \varPi ) = \frac {\varPi T } {\pi \sqrt{\pi}} \hat{\upphi} (0)  - \SP^{\delta}_{1}   ( \upphi;  T, \varPi ) -  \SP^{\delta}_{2}   ( \upphi;  T, \varPi ) + O \bigg( \frac {\varPi T \log \log T} {\log T} \bigg), 
\end{align}
with 
\begin{align}\label{13eq: P12}
	\SP^{\delta}_{\vnu}   ( \upphi;  T, \varPi ) =  \sum_{f \in \SB_{\delta} } \omega_f	 P_{\vnu} (f; \upphi; T) \big(\upvarphi_{T, \varPi} (t_f) + \upvarphi_{T, \varPi} (- t_f) \big). 
\end{align}
Note here that the main term $ \varPi T \hat{\upphi} (0)  / \pi \sqrt{\pi} $ is due to the Weyl law as in \eqref{2eq: Weyl, 1}. 
Therefore Theorems \ref{thm: density, limited} and \ref{thm: density} are deducible easily from the following bounds for $  \SP^{\delta}_{1}   ( \upphi;  T, \varPi ) $ and $  \SP^{\delta}_{2}   ( \upphi;  T, \varPi ) $. 

\begin{lem}\label{lem: P limited}
 We have  
	 \begin{align}\label{15eq: P(phi), 0}
	 	 \SP^{\delta}_{1}   ( \upphi;  T, \varPi ) = O_{\upphi}  (\varPi\sqrt{T} \log T ) ,  
	 \end{align}
 if $\hat{\upphi}$ has support in $(-1, 1)$, and 
 \begin{align}
 	\SP^{\delta}_{2}   ( \upphi;  T, \varPi ) = O_{\upphi} (\varPi \log^2 T ) , 
 \end{align}
 if $\hat{\upphi}$ has support in $(-2, 2)$. 
\end{lem}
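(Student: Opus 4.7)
The plan is to treat $\vnu = 1$ and $\vnu = 2$ separately through the Kuznetsov trace formula. Writing $\cos(\vnu t_f \log p) = \tfrac 1 2 (p^{i \vnu t_f} + p^{-i\vnu t_f})$ and exchanging the $p$- and $f$-sums, the inner average over $\SB_\delta$ is of Kuznetsov shape $\sum_f \omega_f \lambda_f(p^\vnu) h(t_f)$ with even test function $h(t) = (p^{i\vnu t} + p^{-i\vnu t})(\upvarphi(t) + \upvarphi(-t))$, which is precisely $h_2(t; p^{\vnu/2})$ in the notation \eqref{4eq: h2(t;y)} with $\upgamma \equiv 1$. Applying Kuznetsov \eqref{2eq: Kuznetsov} with $m = p^\vnu$, $n = 1$, the diagonal vanishes since $\delta(p^\vnu, 1) = 0$, and the Eisenstein contribution (present only when $\delta = 0$) is controlled by Lemma \ref{lem: Riemann} on the interval $|t - T| \leqslant \varPi \log T$, using $|\tau_{it}(p^\vnu)| \leqslant \vnu + 1$ and $\omega(t) = O(|t|^\vepsilon)$, to give $O(\varPi T^\vepsilon)$ per prime. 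Partial summation then yields the Eisenstein total $O_\upphi(\varPi \sqrt T)$ for $\vnu = 1$ (via $\sum_{p < T} \log p/\sqrt p \Lt \sqrt T$) and $O_\upphi(\varPi \log T)$ for $\vnu = 2$ (via $\sum_{p<T}\log p/p \Lt \log T$), both within the claimed bounds.

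For $\vnu = 1$ with $\hat\upphi$ supported in $(-1, 1)$ the off-diagonal is immediate: by Corollary \ref{cor: u<T} the Bessel integrals $H_2^\pm(4\pi\sqrt p/c, \sqrt p)$ are negligible unless $p/c + 1/c \Gt T$, which combined with the support condition $p < T$ leaves no admissible integer $c \geqslant 1$. This establishes \eqref{15eq: P(phi), 0}.

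For $\vnu = 2$ with $\hat\upphi$ supported in $(-2, 2)$, the off-diagonal is genuinely non-trivial since $c$ can range up to $\asymp p^2/T$ and the Bessel factor does not decay by support alone. My plan here is to invoke the stationary-phase expansion of Section \ref{sec: Bessel, 2}: by Lemma \ref{lem: further analysis} and the remark thereafter, only the $I^\pm_{-+}$ piece of $H_2^\pm$ survives, and it carries the explicit oscillation $\exp(-i(v \pm w))$ with $v = 2\pi p^2/c$ and $w = 2\pi/c$, i.e.\ exactly the conjugate of $e((p^2 \pm 1)/c)$. Pairing this phase with the Kloosterman sum through Luo's identity \eqref{2eq: S = V} converts $S(p^2, \pm 1; c)$ into $\sum_{rq = c}\overline{V_q(p^2, \pm 1; r)}$ (using reality of the Kloosterman sum). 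The resulting prime sum $\sum_p V_q(p^2, \pm 1; r)\log p$ is then executed via a character expansion modulo $r$ parallel to the proof of Lemma 12.1 but with $\vchi^2$ in place of $\vchi$, so that GRH for Dirichlet $L$-functions provides a main term with Gauss-sum coefficients controlled on the model of $\mu(r) \cdot O(\tau(r)\mu(r)^2)$ from \eqref{12eq: R(1;c)}--\eqref{12eq: Rq(1;c)}, plus an error $O(\varphi(r)\sqrt x \log^2(rx))$. Summing over the factorisations $rq = c \Lt T$, together with the residual $p$-weight $\log p/(p \log T)\hat\upphi(2\log p/\log T)$, produces an off-diagonal bound $O_\upphi(\varPi \log^2 T)$.

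The main obstacle is the $\vnu = 2$ off-diagonal: carrying the stationary-phase factorisation cleanly through both sign combinations without leakage into the negligible $I^\pm_{++}$ and $I^\pm_{--}$ parts, handling the derivative piece $I^{\pm\natural}_{-+}$ of \eqref{13eq: derivative I(v,w)} that surfaces when one integrates by parts in the contour variable $v$ coming from Lemma \ref{lem: AFE}, and verifying that the $\vchi^2$-variant of the prime-sum bound still benefits from the $\mu(r)^2$ restriction to square-free moduli needed to absorb the divisor sum. The final $\log^2 T$ is tight: one $\log T$ comes from $\sum_{p<T}\log p/p$ and the other from the $\log^2(rx)$ loss in the GRH step, counterbalanced by the $\varphi(r)^{-1}$ factor of the Gauss-sum decomposition.
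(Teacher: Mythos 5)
Your $\nu = 1$ treatment matches the paper exactly: Kuznetsov kills the diagonal, the Eisenstein piece is controlled trivially, and for $\hat{\upphi}$ of support in $(-1,1)$ the Bessel parameter $u = xy + x/y \asymp p/c$ is always $\Lt T$, so Corollary \ref{cor: u<T} makes the off-diagonal negligible. This is precisely the paper's argument.

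For $\nu = 2$, you diverge sharply from the paper, which simply writes ``Similarly, the off-diagonal contribution to $\SP^{\delta}_{2}(\upphi)$ is negligible in the case $v < 2$.'' Your observation that this is not obviously so --- with $y = p$ and $m = p^2$ one has $u \asymp p^2/c$, so for $\hat{\upphi}$ of support beyond $(-1,1)$ the $P_2$-sum admits primes $\sqrt{T} < p < T^{v/2}$ and then admissible moduli $1 \leqslant c \Lt p^2/T$ --- is a fair concern about the paper's terse proof. But your proposed repair does not close the gap and is the wrong kind of argument for this lemma. First, Lemma \ref{lem: P limited} is unconditional: it is the input to Theorem \ref{thm: density, limited}, which assumes only RH for $\zeta$; the Riemann hypothesis for Dirichlet $L$-functions is the hypothesis of Lemma \ref{lem: P extended}, not this one. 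Importing it here turns an unconditional lemma into a conditional one. Second, the ``$\vchi^2$-variant'' of the prime-sum estimate in \S 11 is not a cosmetic change. In the character decomposition $\sum_p V_q(p^2, \pm 1; r) \log p = \varphi(r)^{-1} \sum_{\vchi} G(\vchi) \sum_p \vchi^2(p) \log p$, the term $\sum_{p\leqslant x}\vchi^2(p)\log p$ produces a main term of size $x$ for \emph{every} real character $\vchi$ modulo $r$, not just the principal one, and there are $2^{\omega(r)}$ of those; so the structure of \eqref{12eq: sum of Vq} does not carry over, and the Ramanujan-sum bookkeeping of \eqref{12eq: R(1;c)}--\eqref{12eq: Rq(1;c)} no longer applies. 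You flag this as an ``obstacle'' but do not resolve it. In short, the $\nu=2$ part of your proof neither reproduces the paper's (admittedly cryptic) argument nor constitutes a complete alternative proof of the stated unconditional bound $O_{\upphi}(\varPi \log^2 T)$.
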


\begin{lem}\label{lem: P extended}
Let $\varPi = T^{\mu}$ for $0 < \mu < 1$. Assume the Riemann hypothesis for Dirichlet $L$-functions. Define 
\begin{align}\label{15eq: v(mu)}
	v (\mu) = \left\{ \begin{aligned}
		& 1, & & \text{ if } 0 < \mu \leqslant 1/3,\\
		& 3 \mu, & & \text{ if } 1/3 < \mu \leqslant 1/2, \\
		& 1 + \mu, & & \text{ if } 1/2 < \mu < 1.
	\end{aligned}\right.
\end{align} Then we have  
	\begin{align}\label{15eq: P(phi)}
		\SP^{\delta}_{1}   ( \upphi;  T, \varPi ) = o (\varPi T) ,
	\end{align}
	if $\hat{\upphi}$ has support in $(- v(\mu), v (\mu))$. 
\end{lem}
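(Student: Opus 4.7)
The plan is to evaluate the inner cuspidal sum appearing in
\[
\SP_1^{\delta} (\upphi; T, \varPi) = \frac{2}{\log T} \sum_{p} \frac{\log p}{\sqrt{p}} \hat{\upphi}\!\left(\frac{\log p}{\log T}\right) \Sigma_p , \qquad \Sigma_p = \sum_{f \in \SB_{\delta}} \omega_f \lambda_f (p) \cos (t_f \log p) (\upvarphi (t_f) + \upvarphi (-t_f)),
\]
by the Kuznetsov formula \eqref{2eq: Kuznetsov} with $m = 1$, $n = p$, applied to the test function $h_2 (t; \sqrt p) = 2 \cos (t \log p) (\upvarphi (t) + \upvarphi (- t))$, which is of the form treated in \S\ref{sec: Bessel, 2} with $\upgamma \equiv 1$; then $2 \Sigma_p$ is exactly the resulting $\Delta_{\delta} (1, p)$. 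The diagonal in \eqref{2eq: Kuznetsov} vanishes because $p$ is prime. The Eisenstein contribution $\Xi (1, p)$ is $O (T^{\vepsilon} \varPi)$ uniformly in $p \leqslant T^{v (\mu)}$, using $\omega (t) \Lt T^{\vepsilon}$ and the $\varPi$-concentration of $\upvarphi + \upvarphi ( - \cdot )$ near $\pm T$; after the outer $p$-sum this contributes $O (T^{\vepsilon} \varPi T^{v (\mu) / 2}) = o (\varPi T)$ since $v (\mu) < 2$.

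For the Kloosterman contribution, invoke the decomposition of $H_2^{\pm}$ developed in \S\ref{sec: Bessel, 2}: up to a negligible error each surviving piece is of the form $\exp ( - i (v \pm w)) \cdot I^{\pm}_{-+} (v, w)$ with $v = 2 \pi p / c$ and $w = 2 \pi / c$. By Lemma \ref{lem: further analysis}, these pieces are negligible unless $|2T - v| \Lt \varPi^{\vepsilon} (\varPi + T / \varPi)$, which confines $c$ to an effective range of length $\asymp p \varPi^{\vepsilon} (\varPi + T / \varPi) / T^2$ centered at $\pi p / T$. Now apply Luo's identity \eqref{2eq: S = V} to write $S (1, \pm p; c) = e ( - (1 \pm p) / c) \sum_{rq = c} V_q (1, \pm p; r)$: the $p$-dependent phase $e (\mp p / c)$ arising from Luo cancels the $e ( - p / c)$ coming from $\exp ( - i v)$ in the surviving Bessel piece, leaving a $p$-dependence only through $V_q (1, \pm p; r)$ and a smooth kernel in $p$.

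At this stage the remaining $p$-sum has the shape
\[
\sum_{p} V_q (1, \pm p; r) \log p \cdot K_{c, q, r} (p),
\]
in which $K_{c, q, r} (p)$ collects $\hat{\upphi} (\log p / \log T) / \sqrt p$, the surviving Bessel factor $I^{\pm}_{-+} (2 \pi p / c, 2 \pi / c)$, and constants; by the derivative bound in Lemma \ref{lem: further analysis}, $\partial_p K_{c, q, r} (p) \Lt T^{1 + \vepsilon} / (\sqrt p \, \varPi \, c \, \log T)$. Partial summation then reduces the $p$-sum to the partial sums $A (x) = \sum_{p \leqslant x} V_q (1, \pm p; r) \log p$, and under the Riemann hypothesis for Dirichlet $L$-functions the estimate \eqref{12eq: sum of Vq} gives $A (x) = x R (1; r) R_q (\pm 1; r) / \varphi (r) + O (\varphi (r) \sqrt x \log^2 (r x))$. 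The main term is harmless since $R (1; r) = \mu (r)$ (cf.~\eqref{12eq: R(1;c)}) forces $r$ to be squarefree and $R_q (\pm 1; r) \Lt \tau (r) \mu (r)^2$ (cf.~\eqref{12eq: Rq(1;c)}) is tame on average, so the $\sqrt x$-error is the only quantity to control.

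The final step is to reassemble everything: factorise $c = rq$, sum dyadically over $p \leqslant T^{v (\mu)}$ and over the short stationary-phase window in $c$, and absorb the $\tau$-losses from opening the Kloosterman sum and from the divisor sum over $q \mid c$. The two thresholds $v (\mu) = 3 \mu$ (for $1 / 3 < \mu \leqslant 1 / 2$) and $v (\mu) = 1 + \mu$ (for $1 / 2 < \mu < 1$) should arise from balancing the square-root saving $\sqrt x$ from \eqref{12eq: sum of Vq} against whichever of the two summands $\varPi$ and $T / \varPi$ dominates in the effective $c$-range, namely $T / \varPi$ when $\varPi \leqslant \sqrt T$ and $\varPi$ when $\varPi \geqslant \sqrt T$. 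The main obstacle will be precisely this bookkeeping: any slack in the effective length of the $c$-sum or in the $p$-derivative of $K_{c, q, r}$ degrades $v (\mu)$, and one must verify that the $o (\varPi T)$ bound survives (rather than a mere $O (\varPi T)$) all the way up to the stated threshold.
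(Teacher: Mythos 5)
Your plan retraces the paper's proof essentially verbatim: Kuznetsov applied to the test function $h_2(t;\sqrt p)$, the Bessel decomposition from \S\ref{sec: Bessel, 2} into pieces $\exp(-i(v\pm w))\,I^{_\pm}_{^{- +}}(v,w)$, Luo's identity with cancellation of the $e(\mp p/c)$ phase against $\exp(-iv)$, the Riemann-hypothesis estimate \eqref{12eq: sum of Vq} via partial summation, and the balance of the $\sqrt x$ saving against the stationary-phase width $\varPi^{\vepsilon}(\varPi+T/\varPi)$ of Lemma \ref{lem: further analysis}. Two cautions are in order. First, the $p$-dependent window on $c$ that you describe cannot be imposed \emph{before} the partial summation over $p$, since that summation requires $c$ and $q$ to be held fixed; the correct order (as in the paper) is to truncate $cq \Lt P/T$ crudely via Corollary \ref{cor: u<T}, perform the $p$-partial summation to form $Q_q^{\scriptscriptstyle\pm}(c;\upphi)$, and only then restrict the resulting integral over $v=2\pi x/cq$ by Lemma \ref{lem: further analysis}. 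Second, the verification that this yields exactly $v(\mu)=3\mu$ on $1/3<\mu\leqslant 1/2$ and $v(\mu)=1+\mu$ on $1/2<\mu<1$ is left to ``bookkeeping'' in your sketch, but it is the entire content of the lemma: once the sums are reordered as above one arrives at $\SO_{^{\pm}}^{_{-+}}(\upphi)\Lt T^{\vepsilon}(\varPi+T/\varPi)(\sqrt P + P/\varPi)$, and the regimes of $v(\mu)$ (including the void $c$-sum for $0<\mu\leqslant 1/3$ and $v<1$) follow by inspection; the $o(\varPi T)$ versus $O(\varPi T)$ concern you raise at the end is in fact automatic for any $v$ strictly below the threshold, so it is not the delicate point.
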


Subsequently, we shall focus on $ \SP^{\delta}_{1}   ( \upphi;  T, \varPi )  $ as the treatment of  $ \SP^{\delta}_{2}   ( \upphi;  T, \varPi )  $ is much easier and yields better bound. 

\subsection*{Notation}  Suppose $\hat{\upphi}$ has support in the closed interval $ [-v, v] $, say for $v < 2$. Let $\varPi = T^{\mu}$ and $P = T^{v}$.  

For brevity, let us write $ \upvarphi (t) = \upvarphi_{T, \varPi} (t)$ as before and  suppress $T, \varPi$ from the notation like $\SP^{\delta}_{1}   ( \upphi;  T, \varPi ) $.

\subsection{Application of Kuznetsov Formula} \label{sec: Kuz, density} By \eqref{12eq: P12} and \eqref{13eq: P12}, we may write
\begin{align*}
	 \SP^{\delta}_{1}   ( \upphi) = \sum_{p } \frac {\log p} {\sqrt{p} \log T}  \hat{\upphi}  \bigg(\frac { \log p} {\log T} \bigg) 
	 \sum_{f \in \SB_{\delta} } \omega_f \lambda_{f} (p ) h_2 ( t_f ; \sqrt{p}  ), 
\end{align*}
where, as in \eqref{4eq: h2(t;y)}, 
\begin{align*}
	h_2 (t; y) = 2  \cos (2t \log y) (\upvarphi (t) + \upvarphi (-t)). 
\end{align*}
By applying the Kuznetsov  trace formula in \eqref{2eq: Kuznetsov}, we have
\begin{align}\label{15eq: P=E+O}
	2 \SP^{\delta}_{1}   ( \upphi) = 2 (\delta-1)  \SE_1  (\upphi) +  \SO_{+}  (\upphi) + (-1)^{\delta} \SO_{-} (\upphi) , 
\end{align}
where 
\begin{align}\label{13eq: Eis}
	\SE_{1} (\upphi) = \sum_{p  } \frac {\log p} {\sqrt{p} \log T}  \hat{\upphi}  \bigg(\frac { \log p} {\log T} \bigg) \cdot \frac 4 {\pi}    \int_{-\infty}^{\infty} \omega(t)    \cos ( t \log p)   \tau_{it}(p)   \upvarphi (t) \nd t, 
\end{align}
\begin{align}\label{13eq: O+-}
\SO_{\pm}  (\upphi) =	\sum_{p  } \frac {\log p} {\sqrt{p} \log T}  \hat{\upphi}  \bigg(\frac { \log p} {\log T} \bigg) \sum_{c} \frac {S (p, \pm 1; c)} {c} H_2^{\pm}  \bigg( \frac {4\pi \sqrt{p}} {c} , \sqrt{p}   \bigg), 
\end{align}

First of all, the Eisenstein $ \SE_{1} (\upphi)  $ is indeed small.  Since the integral in \eqref{13eq: Eis} is trivially $O (\varPi \log^2 T)$ (by \cite[(3.11.10)]{Titchmarsh-Riemann}), we have 
\begin{align}\label{13eq: Eis, bound}
	 \SE_{1} (\upphi) = O (\varPi \sqrt{P} \log T) .
\end{align}
Note that the similar Eisenstein $ \SE_{2} (\upphi)  $ is even smaller: $ O (\varPi \log^2 T) $. 

It is now left to consider the off-diagonal sums $ \SO_{\pm}  (\upphi)  $ as in \eqref{13eq: O+-}.

\subsection{Proof of Lemma \ref{lem: P limited}} It follows from Corollary \ref{cor: u<T} that  the Bessel integral  $ H_2^{\pm}   (   {4\pi \sqrt{p}} / {c} , \allowbreak  \sqrt{p}    )$ in  \eqref{13eq: O+-} is negligibly small for $ c \Gt p / T$. Therefore   $ \SO_{\pm}  (\upphi)  $  is negligible in the case $v < 1$ so that $ p / T = o (1) $ for all $p \leqslant P = T^{v}$.  Similarly, the off-diagonal contribution to  $ \SP^{\delta}_{2}   ( \upphi) $ is  negligible in the case $v < 2$. 

Note that in the odd case  $ \SP^{1}_{1}   ( \upphi)  $ and $ \SP^{1}_{2}   ( \upphi)  $ are actually negligibly small as there is no Eisenstein contribution.

\subsection{Proof of Lemma \ref{lem: P extended}}

In light of \eqref{15eq: P=E+O} and \eqref{13eq: Eis, bound}, it suffices to prove $ \SO_{\pm} (\upphi) = o (\varPi T) $ for any $v < v (\mu)$.   As in \S \ref{sec: Bessel, 2}, let us split $\SO_{\pm}  (\upphi) $ into four similar parts, one of which contains $ I^{_\pm}_{^{- +} } (2\pi p/c, 2\pi / c)  $ as in \eqref{13eq: I-+(v,w)} and \eqref{13eq: I(v,w)} and reads:
\begin{align*}
	\SO_{^{\pm}}^{_{-+}} (\upphi) = \sum_{p  } \frac {\log p} {\sqrt{p} \log T}  \hat{\upphi}  \bigg(\frac { \log p} {\log T} \bigg) \sum_{c} \frac {S (p, \pm 1; c)} {c} e \lp - \frac {p\pm 1} {c} \rp I^{_\pm}_{^{- +} }  \bigg( \frac {2\pi  {p}} {c} , \frac {2\pi} {c}   \bigg). 
\end{align*}
For brevity, let us  write $ I^{_\pm}_{^{- +} } ( 2\pi p/c ) = I^{_\pm}_{^{- +} } (2\pi p/c, 2\pi / c) $. By the Luo identity \eqref{2eq: S = V}, 
\begin{align*}
	\SO_{^{\pm}}^{_{-+}} (\upphi) = \sum_{p  } \frac {\log p} {\sqrt{p} \log T}  \hat{\upphi}  \bigg(\frac { \log p} {\log T} \bigg) \sum_{c} \sum_{q}  \frac {\overline{V_q (p, \pm 1; c)}} {c q}  I^{_\pm}_{^{- +} }  \bigg( \frac {2\pi  {p}} {c q}   \bigg). 
\end{align*}
Next, we change the order of summations and truncate the $c$- and $q$-sums at $c q \Lt P / T$ (again by Corollary \ref{cor: u<T}), obtaining
\begin{align}\label{15eq: O=Q}
	\SO_{^{\pm}}^{_{-+}} (\upphi) = \sum_{c \Lt P/ T} \sum_{q \Lt P/c T} \frac { Q_q^{\scriptscriptstyle \pm } (c; \upphi) } {c q}  + O \big(T^{-A}\big), 
\end{align} 
where 
\begin{align}\label{14eq: Q(c)}
	Q^{\scriptscriptstyle \pm}_q (c; \upphi) = \sum_{p \leqslant P } \overline{V_q (p, \pm 1; c)} \frac {\log p} {\sqrt{p} \log T}  I^{_\pm}_{^{- +} }  \bigg( \frac {2\pi  {p}} {c q}   \bigg)  \hat{\upphi}  \bigg(\frac { \log p} {\log T} \bigg)  . 
\end{align}
In the asymptotic formula \eqref{12eq: sum of Vq},  we drop the condition $p \nmid c$ and change $\log p$ into $ \log p /\sqrt{p}$ by partial summation, and it follows that
\begin{align}\label{14eq: sum of Vq}
	 \sum_{p \leqslant x}     V_{q} ( p, \pm 1 ; c)\frac { \log p } {\sqrt{p}} = \frac {R (1; c) R_q  (\pm 1; c) } {\varphi (c)} 2 \sqrt{x}  + O ( c  (cx)^{\vepsilon} ) . 
\end{align} 
By \eqref{14eq: Q(c)} and \eqref{14eq: sum of Vq}, we get
\begin{align*}
	Q^{\scriptscriptstyle \pm}_q (c; \upphi) = - \frac 1 {\log T} \int_0^{P} \bigg\{  \frac {\overline{R (1; c) R_q  (\pm 1; c)} } {\varphi (c)} 2 \sqrt{x}  + O ( c  (cx)^{\vepsilon} ) \bigg\}\nd I^{_\pm}_{^{- +} }  \bigg( \frac {2\pi  {x}} {c q}   \bigg)  \hat{\upphi}  \bigg(\frac { \log x} {\log T} \bigg). 
\end{align*} 
On the change of variable $v = 2\pi x/ cq$, in view of \eqref{12eq: R(1;c)} and \eqref{12eq: Rq(1;c)}, we deduce that
\begin{align*}
	Q^{\scriptscriptstyle \pm}_q (c; \upphi) \Lt_{\upphi} T^{\vepsilon}  \int_0^{\frac {2\pi P} {cq}} \bigg( |I^{_\pm}_{^{- +} }   ( v ) |  \bigg(\frac {\sqrt{q}} {\sqrt{c v}} + \frac {c} {v} \bigg) + c | I^{_{\pm \natural}}_{^{- +} }   (   v   )  |   \bigg) { \nd v} , 
\end{align*} 
\delete{the main term is equal to 
\begin{align*} 
  	\frac {\sqrt{ c q} R (1; c) R_q  (\pm 1; c)} { \sqrt{2\pi} \log T \cdot   \varphi (c) }   \int_0^{2\pi P/cq}  I^{_\pm}_{^{- +} }   ( v )  \hat{\upphi}  \bigg(\frac { \log (cq v/ 2\pi)} {\log T} \bigg) \frac { \nd v} {\sqrt{v} } ,  
\end{align*}
while the error term is bounded by 
\begin{align*}
	 c(c P)^{\vepsilon} \int_0^{2\pi P/cq}  \big(    | I^{_{\pm \natural}}_{^{- +} }   (   v   )  | +    | I^{_\pm}_{^{- +} }   (   v   ) / v  | \big) \nd v, 
\end{align*}}
where $ I^{_{\pm \natural}}_{^{- +} }  (   v ) $ is the (partial) derivative of  $ I^{_\pm}_{^{- +} }  (   v ) $ defined as in \eqref{13eq: derivative I(v,w)}. By Lemma \ref{lem: further analysis}, the domain of integration may be restricted to $|2T - v| \Lt \varPi^{\vepsilon} ( \varPi +T / \varPi )$, and it yields the bound: 
\begin{align}\label{15eq: Qq(c)}
	Q^{\scriptscriptstyle \pm}_q (c; \upphi) \Lt T^{\vepsilon} \bigg( \varPi + \frac {T} {\varPi} \bigg) \bigg( \frac {\sqrt{q T}} {\sqrt{c  } } +  \frac {c T} {\varPi} \bigg) . 
\end{align} 
Now we insert \eqref{15eq: Qq(c)} into \eqref{15eq: O=Q}, 
\begin{align*}
	\SO_{^{\pm}}^{_{-+}} (\upphi) & \Lt T^{\vepsilon} \bigg( \varPi + \frac {T} {\varPi} \bigg) \sum_{c \Lt P/ T} \sum_{q \Lt P/c T} \frac 1 {c q}  \bigg( \frac {\sqrt{q T}} {\sqrt{c  } } +   \frac {c T} {\varPi} \bigg) \\
	& \Lt T^{\vepsilon} \bigg( \varPi + \frac {T} {\varPi} \bigg) \sum_{c \Lt P/ T}    \bigg( \frac {\sqrt{P}} { {c^2  } } +   \frac {  T} {\varPi} \bigg) \\
	& \Lt T^{\vepsilon} \bigg( \varPi + \frac {T} {\varPi} \bigg)   \bigg(   {\sqrt{P}}   +   \frac {  P } {\varPi} \bigg). 
\end{align*} 
Finally, we verify $ \SO_{^{\pm}}^{_{-+}} (\upphi) = o (\varPi T) $ in various cases. Keep in mind that $\varPi = T^{\mu}$ and $P = T^{v}$.  For $1/2 < \mu < 1$,  $ \SO_{^{\pm}}^{_{-+}} (\upphi) = O (T^{\vepsilon} (\varPi \sqrt{P} + P)) $ and this is $o (\varPi T)$ as long as $v < 1 + \mu$.  For $1/3 < \mu \leqslant 1/ 2$, $ \SO_{^{\pm}}^{_{-+}} (\upphi) = O (T^{\vepsilon} (\varPi \sqrt{P} + P) T / \varPi^2)  $ and this is satisfactory for $v < 3 \mu$. 
 For $0 < \mu \leqslant 1/3$, $\SO_{^{\pm}}^{_{-+}} (\upphi)$ is negligible for $v < 1$ as the $c$-sum is void (of course, in this case \eqref{15eq: P(phi)} also follows from \eqref{15eq: P(phi), 0} in Lemma \eqref{lem: P limited}).  

\section{Proof of the Non-vanishing Results on the Riemann Hypothesis}\label{sec: non-vnaishing, 2} 

Let us modify the argument  in \cite{ILS-LLZ} as follows.  
Put
\begin{align}\label{16eq: defn of pm}
	p^{\delta}_{m} (T, \varPi) =   \sum_{ f \in \SB_{\delta} } \omega_f \delta  (\mathop{\mathrm{ord}}_{ s = s_f} L (s, f) = m   ) \upvarphi_{T, \varPi} (t_f)  \Big/ \sum_{ f \in \SB_{\delta} } \omega_f \upvarphi_{T, \varPi} (t_f) ,  
\end{align}
where the $\delta $ in the sum is again the Kronecker $\delta$-symbol. Clearly
\begin{align} \label{16eq: sum pm}
	\sum_{m=0}^{\infty}  p^{\delta}_{m} (T, \varPi) = 1. 
\end{align}
On the other hand, we choose as in  \cite[(1.42)]{ILS-LLZ} the (optimal) Fourier pair
\begin{align}
	\upphi (x) = \bigg(\frac {\sin (\pi v x)} {\pi v x} \bigg)^2 , \quad \hat{\upphi} (y) = \frac 1 {v} \bigg(1 - \frac {|y|} {v} \bigg), \qquad \text{($|y| < v$)},
\end{align}
for every $v < v (\mu)$ (see \eqref{15eq: v(mu)}). For $ 1/3 < \mu < 1$, we derive from \eqref{1eq: lim, extended} and \eqref{1eq: Weyl, smooth} that 
\begin{align}\label{16eq: sum m pm}
	\sum_{m=1}^{\infty} m  p^{\delta}_{m} (T, T^{\mu}) < \frac 1 {v (\mu)}  + \vepsilon, 
\end{align}
for any $\vepsilon > 0$, provided that $T$ is sufficiently large. 
By subtracting \eqref{16eq: sum m pm} from \eqref{16eq: sum pm}, we obtain the lower bound
\begin{align}\label{16eq: lower bound}
	p^{\delta}_{m} (T, T^{\mu})  > 1 - \frac 1 {v (\mu)} -\vepsilon . 
\end{align}
Note that \eqref{16eq: lower bound} is non-trivial only if $ v (\mu) > 1 $, so we do need $ 1/3 < \mu < 1$ as above. 
In view of \eqref{15eq: v(mu)} and \eqref{16eq: defn of pm}, let us reinterpret \eqref{16eq: lower bound} as 
\begin{align}
	\sum_{f \in \SB_{\delta} : L (s_f, f) \neq 0} \omega_f \upvarphi_{T, T^{\mu}} (t_f)  > \bigg( \min \bigg\{ \frac {3\mu-1} {3\mu}, \frac {\mu } { \mu + 1} \bigg\} - \vepsilon \bigg) \sum_{f \in \SB_{\delta}  } \omega_f \upvarphi_{T, T^{\mu}} (t_f) . 
\end{align}
Similar to the argument in \S \ref{sec: conclusion, non-vanishing}, we may remove the smooth and harmonic weights by Lemma \ref{lem: unsmooth, 1} and the technique of  Kowalski--Michel \cite{KM-Analytic-Rank}. Consequently, we arrive at \eqref{1eq: main, long, RH}, \eqref{1eq: main, RH}, and \eqref{1eq: mu small}.

\begin{appendices}

\section{Addendum: Non-vanishing of Central $L$-values on the Riemann Hypothesis}\label{sec: non-vnaishing, 1/2} \label{sec: addendum} 

In this addendum, we take the opportunity to improve Theorem A.3 in {\rm\cite{Qi-Liu-LLZ}} as follows, with the restriction $\mu > 1/3$ removed, and also to establish conditional non-vanishing results  by (1.40)--(1.45) in \cite{ILS-LLZ}  for the central $L$-values $L (1/2 , f)$ or $L' (1/2, f)$ in the even or odd case.  

\begin{theorem}\label{thm: density, L(1/2)}
Let the notation be as in Appendix in {\rm\cite{Qi-Liu-LLZ}}. 	 Assume the Riemann hypothesis for Dirichlet $L$-functions. Let  $ T, M > 1$ be such that $M = T^{\mu}$ with $0 < \mu < 1$.  Define 
\begin{align}
	v (\mu) = \left\{ \begin{aligned}
		& 1, & & \text{ if } 0 < \mu \leqslant 1/4,\\
		& 4 \mu, & & \text{ if } 1/4 < \mu \leqslant 1/3, \\
		 & 1 + \mu, & & \text{ if } 1/3 < \mu < 1.
	\end{aligned}\right.
\end{align}
Let $\upphi$ be an even Schwartz function with the support of $\hat{\upphi}$ in $(-v (\mu), v (\mu) )$.  Then 
\begin{align}
	\lim_{T \ra \infty} \SD_1 (H^{+} (1), \upphi; h_{T, M}) & = \int_{-\infty}^{\infty} \upphi (x) W  (\mathrm{SO}(\mathrm{even})) (x) \nd x,  \\
	\lim_{T \ra \infty} \SD_1 (H^{-} (1), \upphi; h_{T, M}) & = \int_{-\infty}^{\infty} \upphi (x) W  (\mathrm{SO}(\mathrm{odd})) (x) \nd x .
\end{align}
\end{theorem}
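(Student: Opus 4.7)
The plan is to follow the same template as the proof of Theorem \ref{thm: density}: explicit formula, Kuznetsov trace formula, Luo's identity \eqref{2eq: S = V}, the RH-conditional bound \eqref{12eq: sum of Vq} for sums of variant Kloosterman sums, and stationary-phase analysis of the resulting Bessel integrals. The two main modifications from the special-value setting are: (i) the explicit formula is applied at $s = 1/2$ rather than at $s = s_f$, so the gamma-factor main term after spectral averaging yields the orthogonal symmetry kernels $W(\mathrm{SO}(\mathrm{even}))$ or $W(\mathrm{SO}(\mathrm{odd}))$, with the $\upphi(0)/2$ root-number contribution distinguishing the parity; and (ii) the prime sum to be controlled is the \emph{untwisted}
\begin{equation*}
P_1(f; \upphi; T) = 2 \sum_{p} \lambda_f(p)\, \frac{\log p}{\sqrt{p}\, \log T}\, \hat{\upphi}\!\left( \frac{\log p}{\log T}\right),
\end{equation*}
without the $\cos(t_f \log p)$ twist that appeared in \S\ref{sec: non-vnaishing, 2}. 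The $P_2$-piece is controlled trivially, and powers $p^{\nu}$ with $\nu \geqslant 3$ are negligible by Kim--Sarnak \eqref{3eq: Kim-Sarnk}.

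Next, I would apply the Kuznetsov formula to $\sum_{f} \omega_f h_{T,M}(t_f) \lambda_f(p)$. Since $p \neq 1$, the diagonal is absent; the Eisenstein contribution is bounded by the short-interval estimate for $\zeta(1/2+it)$ as in Lemma \ref{lem: Eis E1,2}; and one is left with an off-diagonal of shape
\begin{equation*}
\SO_{\pm}(\upphi) = \sum_{p \leqslant P} \frac{\log p}{\sqrt{p}\, \log T}\, \hat{\upphi}\!\left( \frac{\log p}{\log T}\right) \sum_{c} \frac{S(p,\pm 1; c)}{c}\, \mathscr{H}^{\pm}\!\left( \frac{4\pi \sqrt{p}}{c}\right),
\end{equation*}
where $\mathscr{H}^{\pm}$ is the Bessel transform of the Gaussian spectral weight $h_{T,M}$ \emph{without} any cosine twist, and $P = T^v$. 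The analogue of Corollary \ref{cor: u<T} for this simpler weight shows that $\mathscr{H}^{\pm}(x)$ is negligible for $x \Lt T$, and a stationary-phase analysis modelled on Lemma \ref{lem: further analysis} shows that $\mathscr{H}^{\pm}(x)$ has essential support in $x \asymp T$ of width $O(M + T/M)$, so that the $c$-sum truncates to $c \Lt \sqrt{P}/T$.

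I would then invoke Luo's identity \eqref{2eq: S = V} to open $S(p, \pm 1; c) \, e(-(p \pm 1)/c) = \sum_{rq = c} V_q(p, \pm 1; r)$, absorb the exponential factor into the Bessel phase, and perform partial summation in $p$ using the RH-conditional estimate \eqref{12eq: sum of Vq}, together with $R(1;r) = \mu(r)$ from \eqref{12eq: R(1;c)} and $R_q(\pm 1; r) \Lt \tau(r) \mu(r)^2$ from \eqref{12eq: Rq(1;c)}. The Ramanujan-sum evaluation forces $r$ to be square-free and saves a factor of $\sqrt{P}$, which combined with the Bessel amplitude bounds produces
\begin{equation*}
\SO_{\pm}(\upphi) \Lt T^{\vepsilon} \left( \left( M + \frac{T}{M}\right) \sqrt{P} + P \right).
\end{equation*}
Setting $M = T^{\mu}$ and $P = T^v$, the requirement $\SO_{\pm}(\upphi) = o(MT)$ reduces to the three conditions $v < 2$, $v < 4\mu$, and $v < 1 + \mu$, whose minimum is exactly the piecewise $v(\mu)$ in the statement: the transition at $\mu = 1/3$ is where $4\mu = 1+\mu$, and the floor $v(\mu) = 1$ holds for $\mu \leqslant 1/4$ (where $4\mu \leqslant 1$, so the RH-extension gives nothing beyond the unconditional support).

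The main obstacle will be the stationary-phase analysis of $\mathscr{H}^{\pm}(x)$ for a purely Gaussian spectral weight in the absence of the cosine twist --- specifically, verifying that the essential support in $x$ concentrates at $x \asymp T$ with width $O(M+T/M)$ and establishing the derivative bound needed for the partial summation in $p$. Once these Bessel estimates are in place, the remainder of the argument parallels \S\ref{sec: Kuz, density} verbatim, and the resulting $v(\mu)$ agrees with that of Theorem \ref{thm: density} on the overlapping range $\mu > 1/3$, while the new regime $1/4 < \mu \leqslant 1/3$ removes the restriction $\mu > 1/3$ present in \cite{Qi-Liu-LLZ}.
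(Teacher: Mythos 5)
Your overall strategy agrees with the paper's: both rest on the explicit formula, the Kuznetsov trace formula for even/odd forms, Luo's identity \eqref{2eq: S = V}, the RH-conditional estimate \eqref{12eq: sum of Vq}, and a stationary-phase localization of the Bessel transform. (The paper's written proof is much terser --- it simply cites Lemmas A.8 and A.9 of \cite{Qi-Liu-LLZ} and states the specific modification --- whereas you reconstruct the argument from scratch, which is a legitimate alternative presentation.)

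However, there is a concrete error in your Bessel analysis. You assert that the Bessel transform of the pure Gaussian weight is essentially supported on a window of width $O(M+T/M)$ around $x \asymp T$, in analogy with Lemma \ref{lem: further analysis}. That analogy fails. In the special-value setting, the cosine twist $\cos(2t\log y)$ shifts the Mehler--Sonine phase to $f_{\scriptscriptstyle -}(r;v,w)=v e^r - w e^{-r}$ with $w$ small, so the relevant phase has $\phi''(0) \approx -v \neq 0$: a \emph{nondegenerate} stationary point, and $\phi'$ varies by $\sim T/\varPi$ over $|r|\Lt 1/\varPi$, giving the width $\varPi + T/\varPi$. In the central-value setting there is no such twist: the $K$-Bessel phase is $\phi(r)=x\sinh r - 2Tr$, and $\phi''(0)=x\sinh 0 = 0$. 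The stationary point at $r=0$ is \emph{degenerate}, so $\phi'(r)=(x-2T)+xr^2/2+\cdots$ varies only by $\sim x/M^2 \asymp T/M^2$ over $|r|\Lt 1/M$. The correct width is therefore $M^{\vepsilon}(M + T/M^{2})$, exactly as the paper states in its modification of Lemma A.8 of \cite{Qi-Liu-LLZ}. Note that the transition $M = T/M^{2}$ occurs at $\mu = 1/3$, matching the break in $v(\mu)$; your $M + T/M$ predicts a spurious transition at $\mu = 1/2$. Consequently the error bound you derive, $T^{\vepsilon}((M+T/M)\sqrt{P}+P)$, does not track the paper's $O(\sqrt{P}(1+T/M^{3})T^{\vepsilon})$; that you nonetheless recover the stated $v(\mu)$ appears to be coincidental rather than a consequence of a correct intermediate estimate. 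You should redo the stationary-phase step (and the resulting truncation and amplitude/derivative bounds) with the width $M + T/M^{2}$, tracking the Airy-type degenerate behaviour at $r=0$.
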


\begin{proof}
It only requires minor modifications to the proof of Theorem A.3  (indeed, Lemmas A.8 and A.9) in {\rm\cite{Qi-Liu-LLZ}}. It suffices to remove the condition $\mu > 1/3$ but weaken the range $ |T-2\pi x| \leqslant M^{1+\vepsilon}  $ into $ |T-2\pi x| \leqslant M^{\vepsilon} (M + T/M^2) $ in the second statement of Lemma A.8 in \cite{Qi-Liu-LLZ}. This is essentially in the spirit of our Lemma \ref{lem: further analysis}. Now the error bound $O (\sqrt{P} T^{\vepsilon})$ then becomes $ O (\sqrt{P} (1+T/M^3) T^{\vepsilon}) $ (for $\sqrt{P} \Gt T$, otherwise the $c$-sum is void) in the first part of the proof of Lemma A.9  in \cite{Qi-Liu-LLZ}. 
\end{proof}

Note that in the notation of this paper $\SB_0 = H^{+} (1)$, $\SB_1 = H^{-} (1)$, and $\varPi = M$. 

Next, we assume the Riemann hypothesis for every $L (s, f)$ with $ f \in H^{\pm} (1) $ and for all Dirichlet $L$-functions. The following results may be derived from Theorem \ref{thm: density, L(1/2)} by adapting the arguments from \S \S \ref{sec: conclusion, non-vanishing}, \ref{sec: non-vnaishing, 2}, and also \cite{ILS-LLZ}. The reader may compare Corollary \ref{cor: long}  with Corollary 1.7 in  \cite{ILS-LLZ}. 

\begin{coro}\label{cor: long}
	We have 
	\begin{align} \label{add: main, long}
	\liminf_{T \ra \infty} 	\frac {	\text{\rm \small \bf \#} \big\{ f \in H^{+} (1)  :    t_f   \leqslant   T , \, L (1/2 ,   f    ) \neq 0
			\big\}} { \text{\rm \small \bf \#} \big\{ f \in H^{+} (1)  :  t_f \leqslant   T 
			\big\} } >    \frac 9 {16}        ,  \\
		\liminf_{T \ra \infty} 	\frac {	\text{\rm \small \bf \#} \big\{ f \in H^{-} (1)  :    t_f   \leqslant   T , \, L ' (1/2 ,   f    ) \neq 0
			\big\}} { \text{\rm \small \bf \#} \big\{ f \in H^{-} (1)  :  t_f \leqslant   T 
			\big\} } >    \frac {15} {16}   . 
	\end{align}  
\end{coro}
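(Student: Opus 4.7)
The plan is to combine the extended density theorem (Theorem~\ref{thm: density, L(1/2)}) with the parity-and-multiplicity device of Iwaniec--Luo--Sarnak \cite[(1.40)--(1.45)]{ILS-LLZ}. Since $t_f \leqslant T$ is a long range, one can let $\mu \to 1^-$ so that the support parameter $v(\mu) = 1+\mu$ approaches $2$. First, fix $\mu$ slightly less than $1$ and choose the non-negative Fej\'er pair
\[
\upphi_v(x) = \biggl(\frac{\sin \pi v x}{\pi v x}\biggr)^{\!2}, \qquad \hat{\upphi}_v(y) = \frac{1}{v}\biggl(1 - \frac{|y|}{v}\biggr)_{\!+},
\]
for some $v$ with $1 < v < v(\mu)$. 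Then $\upphi_v(0) = 1$ and $\mathrm{supp}\,\hat{\upphi}_v \subset (-v(\mu), v(\mu))$, so Theorem~\ref{thm: density, L(1/2)} applies. Writing $r_f = \mathop{\mathrm{ord}}_{s=1/2} L(s,f)$, the $r_f$ zeros at the central point contribute $r_f \upphi_v(0) = r_f$ to the $1$-level density, while the remaining zeros contribute non-negatively. Combined with the harmonic Weyl law \eqref{2eq: Weyl, 1}, this yields
\[
\limsup_{T \to \infty} \frac{\sum_{f \in H^{\pm}(1)} \omega_f\, h_{T, T^{\mu}}(t_f)\, r_f}{\sum_{f \in H^{\pm}(1)} \omega_f\, h_{T, T^{\mu}}(t_f)} \leqslant \int_{-\infty}^{\infty} \upphi_v(x) W(\mathrm{SO}(\pm))(x) \, dx.
\]

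Second, compute the two Katz--Sarnak integrals. Writing $\sigma(x) = \sin(2\pi x)/(2\pi x)$ with Fourier transform $\tfrac{1}{2}\mathbf{1}_{[-1,1]}$, Plancherel gives $\int \upphi_v \sigma = \tfrac{1}{v} - \tfrac{1}{2v^2}$ for $1 \leqslant v \leqslant 2$; together with $\int \upphi_v = \tfrac{1}{v}$ this yields
\[
\int \upphi_v W(\mathrm{SO}(\mathrm{even})) = \frac{2}{v} - \frac{1}{2v^2}, \qquad \int \upphi_v W(\mathrm{SO}(\mathrm{odd})) = 1 + \frac{1}{2v^2}.
\]
Third, invoke parity. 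For $f \in H^{+}(1)$ the root number is $+1$, so $r_f$ is even and $\mathbf{1}(L(1/2,f) = 0) \leqslant r_f/2$. For $f \in H^{-}(1)$ the root number is $-1$, so $r_f$ is odd with $r_f \geqslant 1$, and $L'(1/2, f) = 0$ forces $r_f \geqslant 3$, giving $\mathbf{1}(L'(1/2,f) = 0) \leqslant (r_f - 1)/2$. Substituting and taking $v \to 2^-$ (hence $\mu \to 1^-$) yields the smoothed harmonic-weighted bounds
\[
P^{+}(L(1/2) = 0) \leqslant \frac{1}{v} - \frac{1}{4v^2} \to \frac{7}{16}, \qquad P^{-}(L'(1/2) = 0) \leqslant \frac{1}{4v^2} \to \frac{1}{16},
\]
from which the desired non-vanishing proportions $9/16$ and $15/16$ follow. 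The strict inequalities come from the fact that for any admissible $\mu < 1$ one is allowed to pick $v$ with an arbitrarily small positive buffer below $2$, so the quantitative error $o(\varPi T)$ in Theorem~\ref{thm: density, L(1/2)} can be absorbed into a strict margin.

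Fourth, pass to the unweighted, sharp-cutoff count: unsmooth $h_{T, T^{\mu}}$ via Lemma~\ref{lem: unsmooth, 1}, cover $(0, T]$ by overlapping intervals $|t_f - T_0| \leqslant T_0^{\mu}$ and sum using the Weyl law, and finally strip the harmonic factor $\omega_f = 2 / L(1, \mathrm{Sym}^2 f)$ by the Kowalski--Michel device \cite{KM-Analytic-Rank} adapted to the Maass setting in \cite{BHS-Maass}, exactly as in Sections~\ref{sec: conclusion, non-vanishing} and~\ref{sec: non-vnaishing, 2}. The main obstacle is uniformity as $v \to 2$: the density limit only holds strictly inside the support bound, so one must invoke the quantitative form of Theorem~\ref{thm: density, L(1/2)} (as produced by the proof modelled on Lemma~\ref{lem: P extended}) to check that the error terms remain $o(\varPi T)$ through the dyadic summation, so that the strict inequalities survive.
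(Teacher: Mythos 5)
Your proposal is correct and follows precisely the approach the paper intends: apply the extended density theorem (Theorem~\ref{thm: density, L(1/2)}) with the Fej\'er optimizer $\upphi_v$, invoke the ILS parity device ($r_f$ even with $\mathbf{1}(L(1/2,f)=0)\leqslant r_f/2$ on $H^+(1)$, $r_f$ odd with $\mathbf{1}(L'(1/2,f)=0)\leqslant (r_f-1)/2$ on $H^-(1)$), compute $\int\upphi_v W(\mathrm{SO}(\pm))$, and then unsmooth via Lemma~\ref{lem: unsmooth, 1} and strip $\omega_f$ by Kowalski--Michel; all your formulae and the limiting values $9/16$, $15/16$ check out. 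One small quibble: your explanation of the \emph{strict} inequality does not actually establish it, since for any fixed $v<2$ one obtains the bound $1-\tfrac1v+\tfrac1{4v^2}$, which is \emph{strictly below} $9/16$, so there is no ``margin'' to absorb and letting $v\to 2^-$ only yields $\liminf\geqslant 9/16$ (not $>$); this matches the imprecision in the paper's own displayed statement, which should properly read $\geqslant$ as in \cite{ILS-LLZ}, Corollary~1.7.
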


\begin{coro}
We have 
	\begin{align} \label{add: main, short}
		\liminf_{T \ra \infty} 	\frac {	\text{\rm \small \bf \#} \big\{ f \in H^{+} (1)  :  |t_f - T| \leqslant   T^{\mu} , \, L (1/2 ,   f    ) \neq 0
			\big\}} { \text{\rm \small \bf \#} \big\{ f \in H^{+} (1)  : |t_f - T| \leqslant   T^{\mu} 
			\big\} } \geqslant   \frac 1 {4}        ,  \\
		\liminf_{T \ra \infty} 	\frac {	\text{\rm \small \bf \#} \big\{ f \in H^{-} (1)  : |t_f - T| \leqslant   T^{\mu} , \, L ' (1/2 ,   f    ) \neq 0
			\big\}} { \text{\rm \small \bf \#} \big\{ f \in H^{-} (1)  : |t_f - T| \leqslant   T^{\mu}
			\big\} } \geqslant    \frac {3} {4}  , 
	\end{align}  
if $ 0 < \mu \leqslant 1/4$,   and 
	\begin{align} \label{add: main, short, 2}
& 	\liminf_{T \ra \infty} 	\frac {	\text{\rm \small \bf \#} \big\{ f \in H^{+} (1)  :  |t_f - T| \leqslant   T^{\mu} , \, L (1/2 ,   f    ) \neq 0
		\big\}} { \text{\rm \small \bf \#} \big\{ f \in H^{+} (1)  : |t_f - T| \leqslant   T^{\mu} 
		\big\} }   >   1 - \frac 1 {v (\mu )} + \frac 1 {4 v (\mu )^2}   ,  \\
& \quad	\liminf_{T \ra \infty} 	\frac {	\text{\rm \small \bf \#} \big\{ f \in H^{-} (1)  : |t_f - T| \leqslant   T^{\mu} , \, L ' (1/2 ,   f    ) \neq 0
		\big\}} { \text{\rm \small \bf \#} \big\{ f \in H^{-} (1)  : |t_f - T| \leqslant   T^{\mu}
		\big\} }   >  1 - \frac 1 {4 v (\mu )^2}   , 
\end{align}  
if $1/4 < \mu < 1$, for 
\begin{align}
	v (\mu ) = \min \{ 4\mu, 1+\mu \}. 
\end{align}
\end{coro}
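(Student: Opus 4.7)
The plan is to transfer Theorem \ref{thm: density, L(1/2)} into non-vanishing through the Iwaniec--Luo--Sarnak device of \cite[(1.40)--(1.45)]{ILS-LLZ}, in exact parallel with \S\ref{sec: non-vnaishing, 2}. Set $M = T^{\mu}$ and let $h_{T, M}$ denote the Gaussian weight of \cite{Qi-Liu-LLZ}; for each sign, introduce
\[
p_{m}^{\pm}(T, M) = \frac{\displaystyle \sum_{f \in H^{\pm}(1)} \omega_{f}\, \delta\bigl(\operatorname{ord}_{s = 1/2} L(s, f) = m\bigr)\, h_{T, M}(t_{f})}{\displaystyle \sum_{f \in H^{\pm}(1)} \omega_{f}\, h_{T, M}(t_{f})}.
\]
Since the root number $(-1)^{\delta_{f}}$ is $+1$ on $H^{+}(1)$ and $-1$ on $H^{-}(1)$, the order of vanishing at the central point is even in the first case and odd in the second; hence $p_{m}^{+}$ is supported on even $m \geq 0$, $p_{m}^{-}$ on odd $m \geq 1$, and $\sum_{m} p_{m}^{\pm} = 1$ in both cases.

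Applying Theorem \ref{thm: density, L(1/2)} with the extremal Fourier pair
\[
\upphi(x) = \bigg(\frac{\sin(\pi v x)}{\pi v x}\bigg)^{2}, \qquad \hat{\upphi}(y) = \frac{1}{v}\bigg(1 - \frac{|y|}{v}\bigg)_{+}
\]
for an arbitrary $v < v(\mu)$, together with the Katz--Sarnak densities $W(\mathrm{SO}(\mathrm{even}))(x) = 1 + \sin(2\pi x)/(2\pi x)$ and $W(\mathrm{SO}(\mathrm{odd}))(x) = \delta_{0}(x) + 1 - \sin(2\pi x)/(2\pi x)$ and the Plancherel identity $\int \upphi(x) \sin(2\pi x)/(2\pi x) \, \nd x = \tfrac{1}{2} \int_{-1}^{1} \hat{\upphi}(y) \, \nd y$, a short computation (in the regime $v \geq 1$, which is the only relevant one after $v \to v(\mu)^{-}$) gives the density integrals $2/v - 1/(2v^{2})$ and $1 + 1/(2v^{2})$. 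On $H^{+}(1)$ the minimum non-zero order is $2$, so $\sum_{m} m\, p_{m}^{+} \geq 2(1 - p_{0}^{+})$; on $H^{-}(1)$ the minimum order exceeding $1$ is $3$, so $\sum_{m} m\, p_{m}^{-} \geq p_{1}^{-} + 3(1 - p_{1}^{-}) = 3 - 2 p_{1}^{-}$. Combining with the two density bounds and letting $v \nearrow v(\mu)$ yields
\[
p_{0}^{+}(T, M) > 1 - \frac{1}{v(\mu)} + \frac{1}{4 v(\mu)^{2}} - \vepsilon, \qquad p_{1}^{-}(T, M) > 1 - \frac{1}{4 v(\mu)^{2}} - \vepsilon,
\]
for any $\vepsilon > 0$ and all sufficiently large $T$. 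At $v(\mu) = 1$ these collapse to $1/4$ and $3/4$, handling the range $0 < \mu \leq 1/4$.

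Finally, the smooth Gaussian weight is replaced by the sharp cutoff $|t_{f} - T| \leq T^{\mu}$ via Lemma \ref{lem: unsmooth, 1} applied with $a_{f} = \delta(L(1/2, f) \neq 0)$ (respectively $\delta(L'(1/2, f) \neq 0)$), and the harmonic weight $\omega_{f}$ is then stripped off by the Kowalski--Michel method of \cite{KM-Analytic-Rank}, adapted to Maass forms in \cite{BHS-Maass}, as in the concluding paragraph of \S\ref{sec: conclusion, non-vanishing}. The only real obstacle is verifying the hypothesis \eqref{10eq: assumption} of Lemma \ref{lem: unsmooth, 1} for these particular indicators; this amounts to a mean Lindel\"of bound on the smoothed second moment of $L(1/2, f)$ (respectively $L'(1/2, f)$), which is already implicit in the density setup of \cite{Qi-Liu-LLZ}.
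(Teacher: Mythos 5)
Your proposal is correct and follows the same route the paper intends: the probability weights $p_m^{\pm}$, the ILS optimal Fourier pair, the evaluation of $\int\upphi\,W$ against $\mathrm{SO}(\mathrm{even})/\mathrm{SO}(\mathrm{odd})$ giving $2/v-1/(2v^2)$ and $1+1/(2v^2)$ for $v\geq 1$, the parity argument $\sum m\,p_m^{+}\geq 2(1-p_0^{+})$ and $\sum m\,p_m^{-}\geq 3-2p_1^{-}$, and then unsmoothing via Lemma~\ref{lem: unsmooth, 1} and removal of the harmonic weight by Kowalski--Michel, exactly as in \S\ref{sec: non-vnaishing, 2} and \cite[(1.40)--(1.45)]{ILS-LLZ}. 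One minor remark: the hypothesis \eqref{10eq: assumption} for the indicator $a_f=\delta(L(1/2,f)\neq 0)$ follows trivially from the Weyl law since $|a_f|\leq 1$, so no second-moment Lindel\"of input is actually needed there; and since $v\nearrow v(\mu)$ means $v<v(\mu)$, when $v(\mu)=1$ one is in the $v<1$ regime, but the density integrals agree by continuity at $v=1$ so the stated bounds still follow.
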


\end{appendices}

\def\cprime{$'$}

\end{document}